\documentclass[11pt]{amsart}
\usepackage{amsmath, euscript}
\usepackage{mathtools}
\usepackage{}
\usepackage{graphicx}
\usepackage{xcolor}
\definecolor{mutedcyan}{RGB}{68,114,196}
\definecolor{othermutedcyan}{RGB}{70,103,150}
\usepackage[colorlinks=true, allcolors=othermutedcyan]{hyperref}
\usepackage{amsfonts}
\usepackage{amsthm}
\usepackage{newlfont}
\usepackage{amscd}
\usepackage{amsgen}
\usepackage{amssymb} 
\usepackage{mathrsfs}	
\usepackage{longtable}
\usepackage{listings}
\usepackage{extarrows}
\usepackage{tikz}
\usepackage{tikz-cd}
\usepackage{verbatim}
\numberwithin{equation}{section}
\usepackage[all]{xy}
\usepackage{color}
\usepackage{amssymb}
\usepackage[left=3cm,right=3cm]{geometry}
\usepackage{tikz}
\usepackage{tikz-cd}
\usepackage{mathtools}
\usepackage{bm} %bold mathcalhttps://www.overleaf.com/project/64773398d7fd614621c7e4d6
\usepackage{dynkin-diagrams}
\usetikzlibrary{matrix,shapes,arrows,decorations.pathmorphing}
\usepackage{calligra}
\usepackage{mathrsfs}
\usepackage{enumitem} % funny enumerate environments
\usepackage{tgpagella} %font
\usepackage[parfill]{parskip} %noindent everywhere
\usepackage{float}%override random table positioning
\restylefloat{table}
\usepackage{ytableau}
\usepackage{adjustbox}
\usetikzlibrary{calc,matrix,positioning}

\tikzset{ 
    table/.style={
        matrix of nodes,
        row sep=-\pgflinewidth,
        column sep=-\pgflinewidth,
        nodes={rectangle, text width=2.5em, text height = 1.5em, align=center},
        text depth=1.25ex,
        text height=2.5ex,
        nodes in empty cells
    },
}

\let\blb\mathbb

\def\CC{{\blb C}}

\def\LL{{\blb L}}

\def\PP{{\blb P}}

\def\RR{{\blb R}}

\def\VV{{\blb V}}

\let\cal\mathcal
\def\Ac{{\cal A}}
\def\Bc{{\cal B}}
\def\Cc{{\cal C}}
\def\Dc{{\cal D}}
\def\Ec{{\cal E}}
\def\Fc{{\cal F}}

\def\Hc{{\cal H}}
\def\Ic{{\cal I}}

\def\Kc{{\cal K}}
\def\Lc{{\cal L}}

\def\Nc{{\cal N}}
\def\Oc{{\cal O}}

\def\Qc{{\cal Q}}

\def\Sc{{\cal S}}
\def\Tc{{\cal T}}
\def\Uc{{\cal U}}

\def\Xc{{\cal X}}

\newcommand{\scrH}{\EuScript{H}}

\newcommand{\scrK}{\EuScript{K}}

\newcommand{\scrP}{\EuScript{P}}
\newcommand{\scrQ}{\EuScript{Q}}

\newcommand{\scrV}{\EuScript{V}}
\newcommand{\scrW}{\EuScript{W}}

\newtheorem{lemma}{Lemma}[section]
\newtheorem{proposition}[lemma]{Proposition}
\newtheorem{theorem}[lemma]{Theorem}
\newtheorem{corollary}[lemma]{Corollary}

\theoremstyle{remark}

\newtheorem{remark}[lemma]{Remark}

\def\dbcoh{D^b}

\def\wt{\widetilde}

\def\arw{\longrightarrow}
\def\Hom{\operatorname{Hom}}

\def\im{\operatorname{im}}

\def\Ext{\operatorname{Ext}}
\def\rk{\operatorname{rk}}

\def\Cone{\operatorname{Cone}}

\def\Grass{\operatorname{Gr}}

\DeclareMathOperator{\Gr}{Gr}

\DeclareMathOperator{\Bl}{Bl}
\DeclareMathOperator{\Sym}{Sym}

\DeclareMathOperator{\oh}{\mathcal{O}}
\DeclareMathOperator{\sHom}{\mathscr{H}\text{\kern -3pt {\calligra\large om}}\,}

\newcommand\quotient[2]{
        \mathchoice
            {% \displaystyle
                \text{\raise1ex\hbox{$#1$}\Big/\lower1ex\hbox{$#2$}}%
            }
            {% \textstyle
                #1\,/\,#2
            }
            {% \scriptstyle
                #1\,/\,#2
            }
            {% \scriptscriptstyle  
                #1\,/\,#2
            }
    }

\makeatletter
\def\namedlabel#1#2{\begingroup
    #2%
    \def\@currentlabel{#2}%
    \phantomsection\label{#1}\endgroup
}
\makeatother

\title[Categorical resolutions and birational geometry of nodal GM varieties]{Categorical resolutions and birational geometry of nodal Gushel--Mukai varieties}

\author[K. Grzelakowski]{Kacper Grzelakowski}
\address{K. Grzelakowski: 
Department of Mathematics and Computer Science \\
University of Lodz \\
S. Banacha 22 \\
90-238 Lodz, Poland.}
\email{kacper.grzelakowski@gmail.com}

\author[M. Rampazzo]{Marco Rampazzo}
\address{M. Rampazzo: 
Yau Mathematical Sciences Center \\ 
Tsinghua University \\ 
Shuangqing Complex Bldg.\\ 
Haidian district, Beijing, China.}
\email{marcorampazzo@mail.tsinghua.edu.cn, marco.rampazzo.90@icloud.com}

\author[S. Zhang]{Shizhuo Zhang}
\address{S. Zhang: School of Mathematics, Sun Yat-sen University, Guangzhou 510275, China}
\email{zhangshzh28@mail.sysu.edu.cn}
\begin{document}

\begin{abstract}
    An ordinary Gushel--Mukai variety with a single isolated node is the intersection of the Grassmannian $G(2,5)$ with a nodal quadric and a linear space. We consider such intersections in dimension three, four and five. We describe a flop between the blowup of such a variety and a quadric fibration over $\PP^2$: at the level of derived categories, this flop establishes an equivalence between the categorical resolution of the Kuznetsov component of the Gushel--Mukai variety and the derived category of modules on the even part of the Clifford algebra of the quadric fibration. As a first application, we extend a result of Kuznetsov and Perry to the nodal case, and we describe a subfamily of rational, nodal Gushel--Mukai fourfolds whose Kuznetsov components admit a categorical resolution of singularities by an actual $K3$ surface of degree two without a Brauer twist. This produces evidence for a version of Kuznetsov's rationality conjecture. We also describe the relation with Verra threefolds and fourfolds at the birational and categorical level. In particular, in the three-dimensional case, we investigate alternative birational models by hyperbolic equivalence and by Kuznetsov's spinor modifications. We show that the categorical resolution of the Kuznetsov component of a 1-nodal Gushel-Mukai threefold determines its birational class, and we explicitly construct another birational model, which is again a conic fibration over $\PP^2$ branched over a sextic, with the same Kuznetsov component up to autoequivalences.
\end{abstract}

% \begin{abstract}
%     An ordinary Gushel--Mukai variety with a single isolated node is the intersection of the Grassmannian $G(2,5)$ with a nodal quadric and a linear space. We consider such intersections in dimension three, four and five. We describe a flop between the blowup of such a variety and a quadric fibration over $\PP^2$: at the level of derived categories, this flop establishes an equivalence between the categorical resolution of the Kuznetsov component of the Gushel--Mukai variety and the derived category of modules on the even part of the Clifford algebra of the quadric fibration. As a first application, we extend a result of Kuznetsov and Perry to the nodal case, and we describe a subfamily of rational, nodal Gushel--Mukai fourfolds whose Kuznetsov components admit a categorical resolution of singularities by an actual $K3$ surface of degree two without a Brauer twist. This produces evidence for a version of Kuznetsov's rationality conjecture. Then, we describe the relation with Verra threefolds and fourfolds at the categorical level. As a further application, we show that the categorical resolution of the Kuznetsov component of a 1-nodal Gushel-Mukai threefold determines its birational class. \textcolor{purple}{Finally, by means of Kuznetsov's spinor modification technique, we investigate another birational model of the nodal Gushel--Mukai threefold, which is again a conic fibration over $\PP^2$ branched over a sextic, with the same Kuznetsov component up to autoequivalences.}
% \end{abstract}

\maketitle

\setcounter{tocdepth}{1}
\tableofcontents

\section{Introduction}

For $2\leq n\leq 5$, a smooth, ordinary Gushel--Mukai (GM) variety of dimension $n$ is defined as a transverse intersection:
\begin{equation*}
    X_n = G(2, 5)\cap Q\cap \PP^{n+4}
\end{equation*}
where $Q$ is a smooth quadric hypersurface in $\PP^9$. This notion emerged in the classification of Fano varieties: in fact, it has been proven by Mukai \cite{mukai1989biregular, mukai1993curves, mukai1995new}, up to a later result by Mella \cite{mella1999existence}, that smooth Fano varieties of coindex 3, degree 10, and Picard number 1 can only occur in dimension $3\leq n\leq 6$, and are either ordinary GM varieties as above, or double covers of linear sections of $G(2, 5)$ branched over ordinary GM varieties (called \emph{special} GM varieties). These varieties present remarkable features, and have been object of thorough study from several perspectives, including Hodge theory, birational geometry and derived categories, especially because of deep links with EPW varieties and hyperkähler manifolds. To any GM variety one canonically associates a sextic hypersurface in $\PP^5$, known as an Eisenbud–Popescu–Walter (EPW) sextic, together with its double cover, which is a hyperkähler fourfold called a \emph{double EPW sextic}. The pair consisting of a GM variety and its associated double EPW sextic exhibits many parallels with the relationship between cubic fourfolds and their Fano varieties of lines (which are again hyperkähler) and such pairs provide one of the few known sources of Fano–hyperkähler correspondences in higher dimensions. The Hodge structure of GM varieties is also quite interesting (respectively, \cite{Logachev2012FanoGenus6, IlievManivel2011FanoDegreeTen, Nagel1998GeneralizedHodge, DebarreKuznetsov2019GushelMukai}):
{\renewcommand{\arraystretch}{0.5} % default is 1
\setlength{\arraycolsep}{3pt}{  % default is 5pt
\begin{equation*}
    \begin{array}{c|c|c|c}
        n = 2 & n = 3 & n = 4 & n = 5 \\
        \hline
        &&&\\
        \begin{array}{ccccc}
            && 1 && \\
            & 0 && 0 & \\
            1 && 20 && 1 \\
            & 0 && 0 & \\
             && 1 && \\
        \end{array} &
        \begin{array}{ccccccc}
            &&& 1 &&& \\
            && 0 && 0 && \\
            & 0 && 1 && 0 & \\
            0 && 10 && 10 && 0 \\
            & 0 && 1 && 0 & \\
            && 0 && 0 && \\
            &&& 1 &&&
        \end{array} &
        \begin{array}{ccccccccc}
            &&&& 1 &&&& \\
            &&& 0 && 0 &&& \\
            && 0 && 1 && 0 && \\
            & 0 && 0 && 0 && 0 & \\
            0 && 1 && 21 && 1 && 0 \\
            & 0 && 0 && 0 && 0 & \\
            && 0 && 1 && 0 && \\
            &&& 0 && 0 &&& \\
            &&&& 1 &&&&
        \end{array} &
        \begin{array}{ccccccccccc}
            &&&&& 1 &&&&& \\
            &&&& 0 && 0 &&&& \\
            &&& 0 && 1 && 0 &&& \\
            && 0 && 0 && 0 && 0 && \\
            & 0 && 0 && 2 && 0 && 0 & \\
            0 && 0 && 10 && 10 && 0 && 0 \\
            & 0 && 0 && 2 && 0 && 0 & \\
            && 0 && 0 && 0 && 0 && \\
            &&& 0 && 1 && 0 &&& \\
            &&&& 0 && 0 &&&& \\
            &&&&& 1 &&&&& \\
        \end{array}
    \end{array}
\end{equation*}
}
}

In particular, in even dimensions, the middle Hodge structure of a GM variety coincides, up to a Tate twist, with the second cohomology of its double EPW sextic, and this identification, together with the Verbitsky–Torelli theorem for hyperkähler fourfolds \cite{verbitsky_hks}, yields a complete description of the period maps for GM varieties in dimensions 4 and 6. In odd dimensions the intermediate Jacobian of a GM variety is similarly determined by the geometry of the associated EPW sextic, underlining the central role of EPW constructions in understanding the Hodge theory and moduli of GM varieties.\\
\\
The notion of a Fano variety of $K3$ type is expected to have a categorical counterpart at the level of derived categories in terms of a semiorthogonal decomposition containing a $K3$ category (i.e. its Serre functor is a shift by two and its Hochschild cohomology is the same as an actual $K3$ surface). 
Indeed, in both the cubic fourfold and the GM fourfold cases, the bounded derived category of coherent sheaves admits a semiorthogonal decomposition whose nontrivial component is a $K3$ category.
More precisely, for a smooth cubic fourfold $X \subset \mathbb{P}^5$ one has a semiorthogonal decomposition
\[
\mathrm{D}^b(Y)
=
\langle
\mathcal{K}u(Y),\,
\mathcal{O}_Y,\,
\mathcal{O}_Y(1),\,
\mathcal{O}_Y(2)
\rangle,
\]
where the Kuznetsov component $\mathcal{K}u(Y)$ is a  $K3$ category \cite{kuznetsov_cubic}.
Similarly, for a smooth ordinary GM fourfold $Y$, there is a semiorthogonal decomposition
\[
\mathrm{D}^b(X)
=
\langle
\mathcal{K}u(X),\,
\mathcal{O}_X,\,
\mathcal{U}_X^\vee,\,
\mathcal{O}_X(1),\,
\mathcal{U}_X^\vee(1)
\rangle,
\]
where $\mathcal{U}_X$ denotes the restriction of the tautological bundle from the Grassmannian, and the Kuznetsov component $\mathcal{K}u(X)$ is again a $K3$ category \cite{kuznetsovperry}.

In both cases, moduli space of Bridgeland-stable objects on the Kuznetsov components has been constructed, and proven to be isomorphic to hyperk\"ahler varieties, providing Fano-hyperk\"ahler pairs in a completely categorical way \cite{LahozLehnMacriStellari2018, BayerLahozMacriNuerPerryStellari2021, perry_pertusi_zhao, guo2024conics}.

% A smooth ordinary GM variety of dimension $n\geq 3$ admits a semiorthogonal decomposition as follows \cite{kuznetsovperry}:

% \begin{equation}\label{eq:sod_smooth_GMnfold}
%     \dbcoh(X) = \langle
%         \Kc u, \Oc, \Uc^\vee, \dots, \Oc(n-3), \Uc^\vee(n-3)
%     \rangle,
% \end{equation}
% where $\Kc u$, often called the \emph{Kuznetsov component}, is a K3-type category for $n$ even, and an Enriques category for $n$ odd. This means that the Serre functor of $\Kc u$ is respectively a shift by two, or the composition of a shift by two with an canonical involution.

% \begin{table}[H]
%     \centering
%     \begin{tabular}{c|c|c}
%         \text{dimension} & \text{index} & \text{Kuznetsov component}\\
%         \hline
%         2& 0& \text{K3}\\
%         3& 1& \text{Enriques}\\
%         4& 2& \text{K3}\\
%         5& 3& \text{Enriques}\\
%     \end{tabular}
%     % \caption{Caption}
%     \label{tab:table_of_ordinary_GMs}
% \end{table}

While the Kuznetsov component of a general smooth cubic fourfold is expected to be non-geometric (i.e. not equivalent to the derived category of a $K3$ surface), the corresponding component of the resolution of singularities of a \emph{nodal} cubic fourfold is geometric: inspired by the works \cite{kuznetsov_cubic, cattani_et_al}, we give a description of a ``natural'' Kuznetsov component of a resolution of a nodal ordinary GM fourfold and we describe a similar picture for threefolds and fivefolds.\\ %As an application, we partially extend a result of Kuznetsov and Perry to the nodal case. 

 Let us start by briefly reviewing what is known for cubic fourfolds.
\subsection*{Derived categories of nodal cubic fourfolds}
Consider a cubic hypersurface in $Y\subset\PP^5$ with a single node $y\in Y$, and its blowup $\pi: \wt Y\arw Y$ in the nodal point. Call $E$ the exceptional divisor, and $H$ the pullback of the hyperplane class of $\PP^5$. Then, by applying Kuznetsov's result on categorical resolutions of singularities \cite[Theorem 1]{kuznetsov_resolutions_of_singularities}, there exists an admissible subcategory $\wt \Dc\subset\dbcoh(\wt Y)$ such that one has a semiorthogonal decomposition %\textcolor{purple}{(before my edit it was an in-line equation)}:
\begin{equation}\label{eq:cat_resolution_cubic_4fold}
    \dbcoh(\wt Y) = \langle \Oc_E(2E), \Oc_E(E), \wt \Dc\rangle,
\end{equation}
and such that the functors $\pi_*: \wt\Dc\arw \dbcoh(Y)$ and $\pi^*: \Dc^{\,\text{perf}}(Y)\arw \Dc$ are an adjoint pair (respectively right and left adjoint). However, this decomposition can be refined to the following \cite{kuznetsov_cubic}:

\begin{equation}\label{eq:sod_resolution_nodal_cubic_left}
    \dbcoh(\wt Y) = \langle \Oc_E(2E), \Oc_E(E), \wt{\Kc u}(Y), \Oc, \Oc(H), \Oc(2H)\rangle.
\end{equation}

In light of Equation \ref{eq:cat_resolution_cubic_4fold}, $\wt{\Kc u}(Y)$ can be interpreted as the \emph{categorical resolution of the Kuznetsov component}. One can prove that $\wt{\Kc u}(Y)$ is equivalent to the derived category of a $K3$ surface $Z\subset\PP^4$ with the following argument \cite{kuznetsov_cubic}:\\
\begin{enumerate}
    \item First, $\wt Y$ is a divisor in the blowup $\wt\PP^5 := \operatorname{Bl}_y\PP^5$, where the latter has a second contraction $q:\wt\PP^5\arw \PP^4$ which is the projectivization of a vector bundle of rank two.
    \item Call $h$ the pullback of the hyperplane class from $\PP^4$: then, as divisor classes in $\wt \PP^5$, one sees that $\wt Y = 3H-2E = 2h+H$, i.e. the induced morphism $\wt Y\arw \PP^4$ is a blowup of $\PP^4$ in the $K3$ surface $Z$ given by the vanishing of a section of $q_*\Oc(2h+H)\simeq \Oc(2h)\oplus\Oc(3h)$. Let us call $D$ the exceptional divisor, $\bar q:= q|_E$ and $j$ the embedding of $D$ in $\wt Y$.
    \item By Orlov's blowup formula \cite{orlovblowup}, one has a semiorthogonal decomposition:
    \begin{equation}\label{eq:sod_resolution_nodal_cubic_right}
        \dbcoh(\wt Y) = \langle j_*\bar q^*\dbcoh(Z)\otimes\Oc(D), \Oc, \Oc(h), \Oc(2h), \Oc(3h), \Oc(4h) \rangle.
    \end{equation}

    Finally, comparing the decompositions \ref{eq:sod_resolution_nodal_cubic_left} and \ref{eq:sod_resolution_nodal_cubic_right}, one can prove that $\wt{\Kc u}(Y) \simeq \dbcoh(Z)$ by a sequence of mutations of the exceptional objects generating their semiorthogonal complements.
\end{enumerate}
We proceed with the discussion of nodal Gushel-Mukai $n$-folds.
\subsection*{Main result}
Let $X_n$ be an ordinary GM $n$-fold with a single node $p\in X_n$. Then, the blowup $\wt X_n:=\operatorname{Bl}_p X_n$ also admit a ``nice'' semiorthogonal decompositions in the spirit of \ref{eq:sod_resolution_nodal_cubic_left} (see Proposition \ref{prop:SOD_of_the_resolved_GM}):
\begin{equation}
    \dbcoh(\wt X_n) = \langle
        \Oc_E((n-2)E), \dots, \Oc_E(E), \wt{\Kc u}(X_n), \Oc, \Uc^\vee, \dots, \Oc((n-3)H), \Uc^\vee((n-3)H)\rangle.
    \rangle
\end{equation}
The first goal of this paper is to give a geometric interpretation of $\wt{\Kc u}(X_n)$. 

\begin{theorem}\label{theorem_main_first} (Theorem \ref{thm:kuznetsov_component_clifford_component}). Let $X_n(3\leq n\leq 5)$ be an ordinary Gushel-Mukai $n$-fold with a single node $p\in X_n$. Then the \emph{categorical resolution} $\wt{\Kc u}(X_n)$ is given by $$\wt{\Kc u}(X_n)\simeq D^b(\mathbb{P}^2,\mathcal{C}_n),$$
where $D^b(\mathbb{P}^2,\mathcal{C}_n)$ is the derived category of coherent sheaves of modules on the even part of the Clifford algebra (see 2.4 in \cite{kuznetsov_quadric_fibrations_intersection_quadrics}),  associated to a quadric fibration $R_n$ over $\mathbb{P}^2$ of relative dimension $n-2$. In particular, $\wt{\Kc u}(X_4)\simeq D^b(\mathbb{P}^2,\mathcal{C}_4)\simeq D^b(S,\alpha)$, where $S$ is a degree two $K3$ surface and $\alpha\in\mathrm{Br}(S)$ is a Brauer class. 
\end{theorem}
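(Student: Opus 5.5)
The plan follows the nodal cubic fourfold argument: realize $\wt X_n$ inside a blowup of the ambient space that has a second projective-bundle contraction, pass through a flop to a quadric fibration $R_n\arw\PP^2$, and compare semiorthogonal decompositions.

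\textbf{Geometric setup.} Write $X_n = G(2,5)\cap Q\cap\PP^{n+4}$, where $Q$ is a quadric singular at the node $p\in X_n$. Linear projection from $p$ is birational on $X_n$, and it resolves on $\wt X_n=\operatorname{Bl}_pX_n$. The key observation is how the Grassmannian behaves under projection from $p=[U_0]$, where $U_0\subset V_5$ is a $2$-plane. The blowup $\operatorname{Bl}_pG(2,5)$ is flopped to a projective bundle over $\PP(V_5/U_0)=\PP^2$; concretely, a $2$-plane $U\ne U_0$ determines the point $(U+U_0)/U_0$ when $\dim(U\cap U_0)=1$. Concretely, I would:
\begin{enumerate}
    \item identify the lines through $p$ in $X_n$, which are the flopping locus;
    \item perform the flop $\wt X_n \dashrightarrow R_n$;
    \item check that $R_n\arw\PP^2$ has fibres that are quadrics of dimension $n-2$, cut out by the restriction of $Q$. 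Because $Q$ is singular at $p$, its equation descends to a relative quadric on a rank-$(n)$ vector bundle $\Ec$ over $\PP^2$.
\end{enumerate}
I expect the fibre dimension to come out as $\dim\PP(\Ec)-1=n-2$, i.e. $\rk \Ec = n$, matching the count $\dim X_n=n$.

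\textbf{Derived side.} Since the map is a flop (standard Atiyah/Bondal--Orlov type, with flopping locus a $\PP^k$-bundle or a union of lines), I would invoke Bondal--Orlov or Kuznetsov's flop equivalence to get $\dbcoh(\wt X_n)\simeq\dbcoh(R_n)$. Kuznetsov's quadric fibration theorem \cite{kuznetsov_quadric_fibrations_intersection_quadrics} then gives
$$\dbcoh(R_n)=\langle \dbcoh(\PP^2,\Cc_n), \dbcoh(\PP^2)\otimes\Oc(1),\dots,\dbcoh(\PP^2)\otimes\Oc(n-2)\rangle.$$
This supplies $3(n-2)$ exceptional objects from the Beilinson decomposition of $\PP^2$. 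On the other side, Proposition \ref{prop:SOD_of_the_resolved_GM} supplies exactly $(n-2)+2(n-2)=3(n-2)$ exceptional objects, namely the $\Oc_E(kE)$ together with the pairs $\Oc(kH),\Uc^\vee(kH)$. This numerical match is a good consistency check.

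\textbf{Matching the exceptional collections.} I would express $H$, $E$ and the relative hyperplane class $h$ of $\PP^2$ in terms of one another on the common model. Then, via a sequence of mutations (Serre functor twists, and exact sequences such as the tautological sequence $0\to\Uc\to V_5\otimes\Oc\to\Qc\to0$ restricted, together with sequences relating $\Oc_E$ to $\Oc(h)$), I would transform the collection $\Oc_E((n-2)E),\dots,\Uc^\vee((n-3)H)$ into the collection $\Oc(ah+bH)$ coming from the quadric fibration. Their orthogonal complements are then equivalent, which yields $\wt{\Kc u}(X_n)\simeq\dbcoh(\PP^2,\Cc_n)$.

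\textbf{The case $n=4$.} For $n=4$ the fibration is a conic-free quadric surface fibration over $\PP^2$. Its discriminant is a sextic curve, so Kuznetsov's result identifies $\Cc_4$ with an Azumaya algebra on the double cover $S\arw\PP^2$ branched over that sextic. Here $S$ is a degree two $K3$ surface, and $\dbcoh(\PP^2,\Cc_4)\simeq\dbcoh(S,\alpha)$. This step requires knowing that the fibres have corank at most one, which holds for the general nodal $X_4$ by the genericity assumption.

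\textbf{Main obstacle.} I expect the mutation step to be the hardest part, because $\Uc^\vee$ is not a line bundle. To handle it, I would first express $\Uc^\vee$ on $R_n$ via the exact sequence $0\to\Oc\to\Uc^\vee\to\Oc(H-E)\to0$, which should hold after blowing up $p$, using $U\cap U_0$. This turns $\Uc^\vee$ into an extension of line bundles, after which the mutations reduce to cohomology vanishing computations on the projective bundle.
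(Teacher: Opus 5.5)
\textbf{Genuine gap: the mechanism for $\Uc^\vee$ and the base map.} Your overall architecture matches the paper's. It projects from $p$, passes through a relative Atiyah flop $\wt X_n\dashrightarrow R_n$, applies Bondal--Orlov and Kuznetsov's quadric-fibration decomposition, matches the $3(n-2)$ exceptional objects by mutations, and uses the double cover branched in the sextic for $n=4$ (the paper cites Xie). However, the step you call the main obstacle rests on an exact sequence that cannot exist.

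\begin{itemize}
\item The sequence $0\to\Oc\to\Uc^\vee\to\Oc(H-E)\to0$ fails already on Chern classes: $c_1(\Uc^\vee)=H\neq H-E$.
\item The general point $U$ of $X_n$ has $U\cap U_0=0$, so ``using $U\cap U_0$'' produces no rank-one subbundle.
\item The map to the base is misidentified. The general $U$ maps to the \emph{plane} $(U+U_0)/U_0\in G(2,V_5/U_0)\simeq\PP(\wedge^2(V_5/U_0))$. Your recipe is defined only when $\dim(U\cap U_0)=1$, so it lives on the cone of lines through $p$. It records the $\PP(V_5/U_0)$-coordinate of $\Gamma\subset\PP(U_0)\times\PP(V_5/U_0)$, not the quadric fibration. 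In fact the flopped curves on $R_n$ map to lines of the base.
\end{itemize}

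Without the correct base map you cannot see the relation $\zeta=2h-H=H-2E$, where $h=H-E$ and $\zeta$ is the hyperplane class of the base. This relation is exactly what makes the Beilinson blocks $\Oc(-\zeta),\Oc,\Oc(\zeta)$ visible on $\wt X_n$.

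\textbf{The missing idea.} $\Uc^\vee(-E)$ should not be filtered by line bundles; it is the pullback of $T_{\PP^2}(-1)$ from the base. The three sections of $\Uc^\vee$ vanishing at $p$, namely $(V_5/U_0)^\vee$, give a pulled-back Euler sequence
$0\to\Oc(2E-H)\to(V_5/U_0)^\vee\otimes\Oc\to\Uc^\vee(-E)\to0$.
Hence $\LL_\Oc\Uc^\vee(-E)\simeq\Oc(-\zeta)[1]$; this is the content of Lemma \ref{lem:left_mutation_through_O_tautological}. Three further ingredients complete the mutation step:
\begin{itemize}
\item $\RR_{j_*\Oc_E}\phi^*\Fc\simeq\phi^*\Fc(-E)$, from $0\to\phi^*\Fc(-E)\to\phi^*\Fc\to\Fc_p\otimes\Oc_E\to0$;
\item $\LL_\Oc j_*\Oc_E\simeq\Oc(-E)[1]$;
\item Serre-functor moves with $\omega_{\wt X_n}\simeq\Oc(-(n-2)h)$.
\end{itemize}
Together these turn your collection into the blocks $\Oc(kh-\zeta),\Oc(kh),\Oc(kh+\zeta)$ for $0\le k\le n-3$.

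\textbf{A caveat that supports your instinct to work on $R_n$.} The paper states the sequence on $\wt X_n$, but there the evaluation map drops rank along the flopping locus. At $U$ with $\dim(U\cap U_0)=1$, its image in $U^\vee$ is one-dimensional. Also, $\zeta$ has degree $-1$ on the flopping curves of $\wt X_n$, so $q_*p^*\Oc(-\zeta)\simeq\Oc(-\zeta)\otimes\Ic_{S_R}$ is not a line bundle.

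The clean statement lives on $R_n$. There $q_*p^*$ sends $\Uc^\vee(-E)$, which restricts to $\Oc\oplus\Oc(-1)$ on flopping curves, to the reflexive sheaf $\psi^*T_{\PP^2}(-\zeta)$. On $R_n$ one then has $\LL_\Oc\psi^*T_{\PP^2}(-\zeta)\simeq\Oc(-\zeta)[1]$ directly.
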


The natural strategy to give a simple geometric interpretation to $\wt{\Kc u}(X_n)$, would be to adapt the approach of \cite{kuznetsov_cubic}, described above for cubic fourfolds, and attempt to generalize it to $X_n$. However, as we show in Section \ref{sec:birational_map}, the second contraction of $\wt X_n$ is not a blowup, but a small contraction to a singular $n$-fold $T_n\subset \PP^{n+3}$. Hence, there is no obvious, natural semiorthogonal decomposition associated to such a contraction, and we have to rely on a more complex geometric picture.\\
\\
In fact, a natural semiorthogonal decomposition can be found \emph{up to a birational transformation}: by a modification of a construction from \cite[Section 2.3]{kuznetsov_prokhorov}, we show that $\wt X_n$ is birational to a quadric fibration $R_n$ over $\PP^2$. This map can be obtained by resolving the projection $G(2, 5)\dashrightarrow \PP^8$ from the node, and blowing up the image of $\wt X_n$ in a singular divisor
% In fact, such a decomposition can be found \emph{up to a birational transformation}: we show that $\wt X_n$ is birational to a quadric fibration $R_n$ over $\PP^2$. This map can be obtained by resolving the rational map $G(2, 5)\dashrightarrow \PP^2$ described in \cite[Section 2.3]{kuznetsov_prokhorov}, and restricting the corresponding morphism to $\wt X_n$.
(see \cite[Section 7.1]{kuznetsov_prokhorov_one_nodal} for a description of this construction in dimension three, and \cite[Proof of Corollary 5.2]{bini_kapustka2} for its four-dimensional analogue).
%In dimension four, it has been described by \cite[Proof of Corollary 5.2]{bini_kapustka2}.
Such birational map is a relative Atiyah flop, and it induces a derived equivalence $\dbcoh(\wt X_n)\simeq \dbcoh(R_n)$. As a quadric fibration, $R_n$ admits a rather simple semiorthogonal decomposition \cite{kuznetsov_quadric_fibrations_intersection_quadrics}:
\begin{equation}
    \dbcoh(R_n) = \langle \dbcoh(\PP^2, \mathcal{C}_n), q^*\dbcoh(\PP^2), \dots, q^*\dbcoh(\PP^2)\otimes\Oc((n-3)h)\rangle
\end{equation}
where $h$ is the relative hyperplane class, $q:R_n\arw\PP^2$ is the quadric bundle map, and $\dbcoh(\PP^2, \mathcal{C}_n)$ is the derived category of modules on the even part of the Clifford algebra associated to $R_n$. By comparing the latter decomposition with Equation \ref{eq:sod_resolved_gm_nfold}, and applying a sequence of mutations, we prove that $\wt{\Kc u}(X_n)\simeq \dbcoh(\PP^2, \mathcal{C}_n)$.\\

%{\color{purple}
%The previous version of the remark is commented below (i removed the ``remark'' environment, since now it's more like an additional result!).
\subsection*{Applications} We describe several applications of Theorem \ref{main_theorem_KP_nodal}, which allow to further characterize the categorical resolution of the Kuznetsov component.
\subsubsection*{The relationship with Verra varieties}
% \begin{remark}\label{remark_relation_Verra}
Verra threefolds and fourfolds are Fano varieties whose double quadric fibration structure produces a rich geometry. In particular, a Verra threefold is a $(2,2)$-section of $\PP^2$ \cite{verra2004prym}, and a Verra fourfold is a double cover of $\PP^2\times\PP^2$ branched in a Verra threefold \cite{verra2004prym, iliev_kapuastka_kapustka_ranestad_KummerHK}. As a consequence of Theorem \ref{theorem_main_first}, and the flop $\wt X_n\dashrightarrow R_n$ mentioned above, we describe the relationship with Verra threefolds and fourfolds.
\begin{enumerate}
    \item If $n=3$, then $\widetilde{X_3}$ is known to be birationally equivalent to a Verra threefold $V_3$ \cite{debarre_iliev_manivel_GM_Verra}. The latter has a natural semiorthogonal decomposition whose non-trivial component is also equivalent to the derived category $D^b(\mathbb{P}^2,\mathcal{C}_3')$ of modules over some Clifford algebra $\mathcal{C}_3'$ on $\mathbb{P}^2$, denoted by $\Kc u(V_3)$. Given a one-nodal GM threefold $X_3$, and the conic fibration $R_3$ birational to it, we prove that there is a Verra threefold $V_3$ satisfying the following condition: there is a third quadric fibration $W$ over $\PP^2$ such that both $X_3$ and $R_3$ are obtained by $W$ via the operation of hyperbolic reduction (see Section \ref{sec:quadric_reductions} and the sources therein for more about hyperbolic reductions). This relation is often called \emph{hyperbolic equivalence}. Then, by a combination of Theorem \ref{thm:kuznetsov_component_clifford_component} and a result of \cite{auel_bernardara_bolognesi_quadric_fibrations}, we conclude that $\Kc u(V_3)\simeq\wt{\Kc u}(X_3)$. 
    \item If $n=4$, then $\widetilde{X_4}$ is birationally equivalent to a Verra fourfold $V_4$, whose nontrivial semiorthogonal component $\Kc u(V_4)$ is also a derived category of a twisted $K3$ surface of degree two. In this case, the fact that the Kuznetsov components are equivalent can be deduced by Theorem \ref{thm:kuznetsov_component_clifford_component}, once we apply \cite[Corollary 4.13]{bini_kapustka2} to obtain the hyperbolic equivalence of $R_4$ and $V_4$.  
\end{enumerate}
% \end{remark}

We summarize the discussions above into the following proposition:
\begin{proposition}\label{prop_relation_Verra} (Corollary \ref{cor:verra_derived_cats}).
Let $X_n(3\leq n\leq 4)$ be a one-nodal Gushel-Mukai $n$-fold, then the categorical resolution $\wt{\Kc u}(X_n)$ of the Kuznetsov components constructed in Theorem~\ref{theorem_main_first} is equivalent to Kuznetsov component $\Kc u(V_n)$ of some Verra $n$-fold. 
\end{proposition}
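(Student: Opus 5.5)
The plan is to reduce the statement to Theorem \ref{theorem_main_first}, which gives $\wt{\Kc u}(X_n)\simeq \dbcoh(\PP^2,\mathcal{C}_n)$ with $\mathcal{C}_n$ the even Clifford algebra of the quadric fibration $R_n\arw\PP^2$. It then suffices to show that $\Kc u(V_n)$ is also the derived category of modules over the even Clifford algebra $\mathcal{C}'_n$ of a quadric fibration over $\PP^2$, and that $\mathcal{C}_n$ and $\mathcal{C}'_n$ are Morita equivalent.

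\textbf{Case $n=3$.} A Verra threefold $V_3\subset\PP^2\times\PP^2$ is a $(2,2)$-divisor. Projecting to the first factor exhibits it as a conic bundle over $\PP^2$, and Kuznetsov's decomposition \cite{kuznetsov_quadric_fibrations_intersection_quadrics} gives
\[
\dbcoh(V_3)=\langle \dbcoh(\PP^2,\mathcal{C}'_3),\, q^*\dbcoh(\PP^2)\rangle .
\]
We take $\Kc u(V_3):=\dbcoh(\PP^2,\mathcal{C}'_3)$.

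The next step is to build, from $X_3$, a quadric fibration $W\arw\PP^2$ of higher relative dimension with a regular isotropic subbundle. Two hyperbolic reductions of $W$ along suitable isotropic subbundles should yield $R_3$ and a conic fibration of Verra type. My first attempt is to take $W$ from the geometry of the projection of $G(2,5)$ from the node. The quadrics containing the image of $\wt X_3$, restricted to the fibres of $\wt X_3\dashrightarrow\PP^2$, give a family of quadrics over $\PP^2$ in a rank-$5$ (or rank-$7$) bundle. This family contains both the $R_3$-fibres and the $V_3$-fibres as hyperbolic reductions. Hyperbolic reduction preserves the even Clifford algebra up to Morita equivalence \cite{auel_bernardara_bolognesi_quadric_fibrations}, so
\[
\dbcoh(\PP^2,\mathcal{C}_3)\simeq \dbcoh(\PP^2,\mathcal{C}_W)\simeq \dbcoh(\PP^2,\mathcal{C}'_3).
\]

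\textbf{Case $n=4$.} The Verra fourfold $V_4$ is a double cover of $\PP^2\times\PP^2$ branched in a Verra threefold. Projecting to one factor gives a quadric surface fibration over $\PP^2$, whose even Clifford algebra defines $\Kc u(V_4)$ and is a twisted K3 of degree two. By \cite[Corollary 4.13]{bini_kapustka2}, $R_4$ and $V_4$ are hyperbolically equivalent. The same Clifford-invariance argument then gives $\dbcoh(\PP^2,\mathcal{C}_4)\simeq\Kc u(V_4)$.

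The main obstacle is the $n=3$ construction of $W$. We must check that the isotropic subbundles used are regular, meaning they avoid the singular loci of the fibres over the whole of $\PP^2$, so that the reduction is defined globally. We must also check that the reduced fibration really is a $(2,2)$-divisor in $\PP^2\times\PP^2$ and not merely a conic fibration with the same discriminant sextic. I would verify this by writing $W$ explicitly in coordinates adapted to the node and identifying the reduced bundle as $\Oc^{\oplus 3}$ with the quadratic form twisted by $\Oc(2)$.
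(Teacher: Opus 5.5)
Your architecture matches the paper's: reduce to Theorem~\ref{theorem_main_first}, exhibit a hyperbolic equivalence between $R_n$ and a Verra fibration, and conclude by Morita invariance of the even Clifford algebra \cite{auel_bernardara_bolognesi_quadric_fibrations}. For $n=4$ you cite exactly what the paper cites, \cite[Corollary 4.13]{bini_kapustka2}.

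The gap is in the case $n=3$. There the existence of $W$, and of $V_3$ itself, is the whole content, and you assert it rather than construct it. ``The quadrics containing the image of $\wt X_3$, restricted to the fibres'' does not define a quadric fibration in a rank-$5$ or rank-$7$ bundle. Saying that this family ``contains the $V_3$-fibres'' is circular, because no Verra threefold is given in advance; it has to be produced from $X_3$. The natural rank-$5$ candidate coming from the projection from the node is $R_5\subset\PP(\Ec^\vee)$ with $\Ec=\scrP^\vee\otimes T(-\zeta)\oplus\Oc(\zeta)$. It relates $R_3$ to $R_5$ by hyperbolic reduction only when the trivial subbundle $\Oc^{\oplus 2}\hookrightarrow\Ec$ cut out by the two hyperplanes is isotropic. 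That is the codimension-two condition of Proposition~\ref{prop:isotropic_embedding_odd}, so this candidate fails for a general $X_3$, and it says nothing about the Verra side.

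The missing idea is to build $W$ from a split locally free resolution of $\wt\Ec_3=(T(-\zeta)^{\oplus 2}\oplus\Oc(\zeta))/\Oc^{\oplus 2}$. The Euler sequence and the snake lemma give $0\to\Oc(-\zeta)^{\oplus 2}\to\Oc^{\oplus 4}\oplus\Oc(\zeta)\to\wt\Ec_3\to 0$. Since $H^2(\PP^2,\Oc(-2\zeta))=0$, \cite[Proposition 3.6]{bini_kapustka2} lifts the form defining $R_3$ to a quadric fibration $W$ of relative dimension five in $\PP(\Oc^{\oplus 4}\oplus\Oc(-\zeta)^{\oplus 3})$, having $R_3$ as a hyperbolic reduction. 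So the rank is $7$, and the relevant isotropic subbundles have rank two.

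The Verra threefold then comes essentially for free. On the trivial block $\Oc^{\oplus 4}$ the form is a constant quadratic form on $\CC^4$, so it has a two-dimensional isotropic subspace. Reducing along it leaves $\Oc(-\zeta)^{\oplus 3}$, which gives a conic bundle in $\PP(\Oc(-\zeta)^{\oplus 3})\simeq\PP^2\times\PP^2$ cut out by a section of bidegree $(2,2)$ \cite[Lemma 3.7]{bini_kapustka2}. This also turns your two checks into linear algebra on that constant form, with no coordinates adapted to the node needed. The first is regularity of the isotropic subbundle on the Verra side. The second is that the reduction is an honest $(2,2)$-divisor, not merely a conic bundle with the same discriminant.
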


% \begin{remark}\label{remark_relation_Verra}
% As a consequence of this result, there may be interesting implications for the relationship with Verra fourfolds and threefolds, which will be the subject of future investigation.
% \begin{enumerate}
%     \item If $n=3$, then $\widetilde{X_3}$ is birationally equivalent to a Verra threefold $V_3$, which is a $(2,2)$ hypersurface in $\mathbb{P}^2\times\mathbb{P}^2$, whose non-trivial semi-orthogonal component is also equivalent to derived category $D^b(\mathbb{P}^2,\mathcal{C}_3')$ of modules over some Clifford algebra $\mathcal{C}_3'$ on $\mathbb{P}^2$, denoted by $\Kc u(V_3)$. So it is interesting to ask if $\Kc u(V_3)\simeq\wt{\Kc u}(X_3)$. 
%     \item If $n=4$, then $\widetilde{X_4}$ is birationally equivalent to a Verra threefold $V_4$, whose non-trivial semi-orthogonal component $\Kc u(V_4)$ is also a derived category of a twisted $K3$ surface of degree two. So one can ask if $\Kc u(V_4)\simeq\wt{\Kc u}(X_4)$ as well.  
% \end{enumerate}
% \end{remark}

\subsubsection*{Duality of GM varieties of different dimensions}
Kuznetsov and Perry prove that for $5\leq n\leq 6$, for a smooth GM $n$-fold, $X_n$, there is a smooth GM $n-2$-fold $X_{n-2}$ such that $\Kc u(X_n) \simeq \Kc u(X_{n-2})$. We extend this result for some nodal GM varieties, by replacing the Kuznetsov component with its categorical resolution. More precisely, we have:

% As the first application, we partially extend a result of Kuznetsov and Perry to the nodal case, by replacing the honest Kuznetsov components of nodal Gushel-Mukai varieties by its categorical resolution. In fact, in \cite{kuznetsov_perry_categorical_cones}, it is proven that for $5\leq n\leq 6$, for a smooth Gushel--Mukai $n$-fold, $X_n$, there is a smooth Gushel--Mukai $n-2$-fold $X_{n-2}$ such that $\Kc u(X_n) \simeq \Kc u(X_{n-2})$. For nodal Gushel-Mukai $n$-fold, we prove 

\begin{theorem}\label{main_theorem_KP_nodal}(Theorems \ref{thm:GM_duality_odd}, \ref{thm:GM_duality_even})
The following holds:
\begin{enumerate}
    \item Choose two general hyperplane sections $s_1, s_2$ of $G(2, 5)$ passing through the node $p$. Then, in the space of quadric hypersurfaces in $\PP^9$ which are nodal in $p$, there is a codimension two subspace $\scrH$ such that for the general $\scrQ\in\scrH$, the Gushel--Mukai fivefold $X_5:= G(2, 5)\cap\scrQ$ enjoys the following property: there is a nodal Gushel--Mukai threefold $X_3 = \VV(s_1, s_2)\cap X_5$ such that $\wt{\Kc u}(X_3)\simeq \wt{\Kc u}(X_5)$.
    \item  Choose three general hyperplane sections $s_1, s_2, s_3$ of $G(2, 5)$ passing through $p$. Then, in the space of quadrics in $\PP^9$ nodal in $p$, there is a codimension two subspace $\scrH$ such that for the general $\scrQ\in\scrH$, the Gushel--Mukai fourfold $X_4:= G(2, 5)\cap\scrQ\cap\VV(s_1)$ enjoys the following property: there is a smooth $K3$ surface of degree two $\wt X_2$, which is the blowup of $X_2 = \VV(s_2, s_3)\cap X_4$ in $p$, such that $\dbcoh(\wt X_2)\simeq \wt{\Kc u}(X_4)$.
\end{enumerate}    
\end{theorem}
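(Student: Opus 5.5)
The plan is to reduce both statements to Theorem \ref{theorem_main_first}, which identifies $\wt{\Kc u}(X_n)\simeq\dbcoh(\PP^2,\Cc_n)$, where $\Cc_n$ is the even Clifford algebra of the quadric fibration $R_n\arw\PP^2$ obtained from the flop of $\wt X_n$. Once this is available, both equivalences become statements about Clifford algebras on $\PP^2$. The point is to produce quadric fibrations over the \emph{same} $\PP^2$ that are hyperbolically equivalent. By Kuznetsov's theorem on hyperbolic reduction (Section \ref{sec:quadric_reductions}), if $R$ is obtained from $R'$ by hyperbolic reduction along an isotropic subbundle, the even Clifford algebras are Morita equivalent, so $\dbcoh(\PP^2,\Cc)\simeq\dbcoh(\PP^2,\Cc')$.

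For (1), I would first make the construction of $R_n$ explicit. Projecting $G(2,5)$ from the node $p$ and resolving gives a $\PP^2$ parametrizing, say, $3$-spaces containing the $2$-plane $p$ (or lines through a point of the dual picture, following \cite[Section 2.3]{kuznetsov_prokhorov}). The fibre of $R_n$ over a point of $\PP^2$ is the residual quadric cut by $\scrQ$ on a linear space $\Lambda_t$ of dimension $n-1$, which comes from the cone over the fibre in $G(2,5)$. Passing from $X_5$ to $X_3=\VV(s_1,s_2)\cap X_5$ with $s_i$ through $p$ cuts each $\Lambda_t$ by two hyperplanes; the fibrewise quadric goes from rank $\le 5$ on a rank-$5$ bundle to rank $\le 3$ on a rank-$3$ bundle. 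In general this is only a linear section, not a hyperbolic reduction. The codimension-two condition defining $\scrH$ should be precisely that the rank-$2$ subbundle cut out by $s_1,s_2$ is complementary to an isotropic line subbundle $\Oc(a)\subset\Lambda$ along which the quadric of $R_5$ reduces. Equivalently, $\scrQ$ vanishes on a specified $\PP^1$-bundle direction determined by $s_1,s_2$, which imposes two linear conditions on the nodal quadrics. I would then verify that $R_3$ is the hyperbolic reduction of $R_5$ with respect to this isotropic line subbundle, so that $\Cc_5$ and $\Cc_3$ are Morita equivalent. Hence $\wt{\Kc u}(X_5)\simeq\dbcoh(\PP^2,\Cc_5)\simeq\dbcoh(\PP^2,\Cc_3)\simeq\wt{\Kc u}(X_3)$.

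For (2), the same mechanism applies to $R_4$, a quadric surface fibration over $\PP^2$. Under the two conditions defining $\scrH$, $R_4$ acquires an isotropic line subbundle, and its hyperbolic reduction is a fibration in quadrics of dimension zero: a double cover $S\arw\PP^2$ branched over the discriminant sextic, with $\Cc_0=\Oc_S$ and \emph{no} Brauer twist. This gives $\wt{\Kc u}(X_4)\simeq\dbcoh(\PP^2,\Cc_4)\simeq\dbcoh(S)$. It then remains to identify $S$ with $\wt X_2=\Bl_p(\VV(s_2,s_3)\cap X_4)$. The nodal GM surface $X_2$ projected from its node maps to $\PP^2$ via the same projection. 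Since the degree is $10-2\cdot 2\cdot\ldots$, the projection is $2:1$, and blowing up the node resolves it to the double plane. One checks that its branch curve is the discriminant of $R_4$, for instance by observing that $X_2$ is the relative zero-dimensional quadric cut from $R_4$ by the subbundle determined by $s_2,s_3$, which is exactly the reduction.

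The main obstacle is the middle step: pinning down the correct codimension-two locus $\scrH$, and proving that the linear section by $s_i$ really coincides with a hyperbolic reduction rather than a mere linear section. This requires an explicit coordinate description of the bundles $\Lambda$ and the induced quadratic forms on $\PP^2$, including the twists by $\Oc(h)$, so that the isotropic subbundle is a genuine subbundle whose orthogonal is the section. I would first try local coordinates at $p$ in $\PP^9$, writing $\scrQ$ as a quadric in the tangent directions plus a linear term, and reading off the isotropy conditions as two linear equations.
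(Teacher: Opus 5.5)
Your skeleton matches the paper's. You reduce, via Theorem \ref{thm:kuznetsov_component_clifford_component}, to the Clifford categories of $R_5,R_3$ (resp.\ $R_4,R_2$) over the same $\PP^2$. You arrange for the smaller fibration to be a hyperbolic reduction of the larger one. You then conclude by Morita invariance of the even Clifford algebra under hyperbolic reduction \cite{auel_bernardara_bolognesi_quadric_fibrations}. In (2) the reduced fibration has relative dimension zero, so its even Clifford algebra is commutative and no Brauer class appears.

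The gap is the step you yourself call the main obstacle, and it is the only part of the argument specific to this theorem. You do not pin down $\scrH$. You do not show that it has codimension two, nor that the linear section actually becomes a hyperbolic reduction. You give two descriptions of $\scrH$ and call them equivalent without argument: an isotropic line subbundle $\Oc(a)$ complementary to the section, and $\scrQ$ vanishing on a $\PP^1$-bundle determined by $s_1,s_2$. Moreover, for a single fixed line subbundle $\Oc(a)$, isotropy is the vanishing of an element of $H^0(\PP^2,\Oc(-2a))$, which is one linear condition when the line is trivial. In the paper the two conditions come from two trivial lines, one per hyperplane, not from a single line complementary to $\VV(\bar s_1,\bar s_2)$.

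The missing idea, as in the paper, is that the isotropic subbundle is supplied by the hyperplanes themselves.
\begin{itemize}
\item A hyperplane section of $\Grass$ through $p$ is the pullback of an element of $|h|$. Since $\Oc(h)$ is the relative hyperplane bundle of $\PP(\Ec^\vee)\arw\PP(\wedge^2\scrW)$, the images $\bar s_1,\bar s_2$ define an embedding of a \emph{trivial} bundle $\iota:\Oc^{\oplus 2}\hookrightarrow\Ec$ with cokernel $\wt\Ec_3$ (Equation \ref{eq:sequences_E_tilda}). Hence $R_3=\VV(\sigma,\bar s_1,\bar s_2)=\VV(\sigma)\subset\PP(\wt\Ec_3^\vee)$.
\item Write $\sigma$ in blocks along $\iota$ (Proposition \ref{prop:isotropic_embedding_odd}). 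The diagonal entries $\sigma_{11},\sigma_{22}$ on the two trivial summands are sections of $\Oc_{\PP^2}$, i.e.\ constants. So $\sigma$-isotropy of $\iota$ is the pair of linear conditions $\sigma_{11}=\sigma_{22}=0$, and this defines $\scrH=\scrH_\iota$.
\item For $\sigma\in\scrH_\iota$ the paper reads $R_3$ as the hyperbolic reduction of $R_5$ along $\iota$. No coordinate computation at $p$ is needed.
\item Part (2) is identical, with $\wt\Ec_4$ the quotient of $\Ec$ by the trivial line given by $\bar s_1$, and $\iota$ given by $\bar s_2,\bar s_3$ (Proposition \ref{prop:isotropic_embedding_even}).
\end{itemize}

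Your identification of $\wt X_2$ in (2) is also imprecise. The projection from the node is birational onto $T_2$, not $2:1$. Since $\Gamma=\emptyset$ for $n=2$, one has $\wt X_2\simeq T_2\simeq R_2$; see the remark extending Diagram \ref{eq_keqdiagram} to surfaces. The double cover branched in $\Delta$ is $\psi$, given by $|\zeta|=|2h-H|=|H-2E|$, and your degree count should read $\zeta^2=10-8=2$. Thus $\wt X_2$ is by construction the linear section $\VV(\bar s_2,\bar s_3)$ of $R_4$, and no separate comparison of branch curves is required.
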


While the general one-nodal GM fivefold is rational, this is not true for fourfolds. However, the  fourfolds we consider in point $(2)$ of Theorem \ref{main_theorem_KP_nodal} are indeed rational, since they are birational to rational quadric fibrations, and this explains the vanishing of the Brauer class. Hence, Theorem \ref{main_theorem_KP_nodal} characterizes the categorical resolution of the Kuznetsov component as ``geometric'', i.e. equivalent to the derived category of coherent sheaves on a $K3$ surface. This result confirms a GM analogue of the Kuznetsov's rationality conjecture by allowing singularities, which predicts that a singular Gushel-Mukai fourfold is rational if and only if the Kuznetsov component admits a categorical resolution of singularities by an actual $K3$ surface. We also note that Theorem~\ref{theorem_main_first} and Theorem~\ref{main_theorem_KP_nodal}(2) are in the same spirit of the results obtained in \cite[Theorem 1.3, Theorem 1.5]{cheng2025derived}, where the authors consider singular quartic double fivefolds.

%\textcolor{purple}{I commented away a paragraph, since it is redundant.}
% Given a one-nodal, ordinary Gushel--Mukai fivefold $X_5$ in a codimension two subfamily, we associate to it a codimension two hyperplane section $X_3$ such that there are quadric fibrations $R_5$ and $R_3$ over $\PP^2$ and birational maps $X_5\dashrightarrow R_5$, $X_3\dashrightarrow R_3$ with the following property: $R_3$ is a hyperbolic reduction of $R_5$. This allows to conclude that the ``Clifford components'' of their derived categories (respectively $\Bc_5$ and $\Bc_3$) are equivalent \cite{auel_bernardara_bolognesi_quadric_fibrations}. Then, by the former result, we have $\wt{\Kc u}(X_5)\simeq \Bc_5\simeq \Bc_3\simeq \wt{\Kc u}(X_3)$. Similarly, we describe a family of one-nodal Gushel--Mukai fourfolds $X_4$ such that the Kuznetsov components of their categorical resolutions are equivalent to twisted derived categories of $K3$ surfaces of degree 10.

\subsubsection*{Categorical Torelli theorem for nodal Gushel--Mukai threefolds}
As a further application, we prove a (birational) categorical Torelli theorem for the 1-nodal Gushel-Mukai threefold $X_3$. In \cite{jacovskis2021categorical}, the authors show that the Kuznetsov component $\Kc u(X)$ of a general smooth Gushel-Mukai threefold $X$ characterizes the birational class of $X$. In \cite{FLZ2026categorical}, the authors show that the Kuznetsov component $\Kc u(X')$ of a 1-nodal non-factorial Gushel-Mukai threefold $X'$ determines its birational class, and it is clear that its \emph{categorical absorption}(in the sense of \cite[Proposition 3.3]{kuznetsov2025derived}) $\widetilde{A}_{X'}\subset\Kc u(X')$ determines the birational class as well. For 1-nodal factorial Gushel-Mukai threefold $X_3$, we prove the following:

\begin{theorem}\label{main_theorem_Torelli} (Theorem \ref{thm:Torelli_nodal_GM}).
Let $X_3$ be a 1-nodal factorial Gushel-Mukai threefold, then the categorical resolution $\wt{\Kc u}(X_3)$ determines its birational class among 1-nodal factorial Gushel-Mukai threefolds. 
\end{theorem}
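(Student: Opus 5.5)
Given two 1-nodal factorial GM threefolds $X_3$ and $X_3'$ with $\wt{\Kc u}(X_3)\simeq\wt{\Kc u}(X_3')$, we want to show they are birational. The strategy is to pass through the conic fibrations produced by Theorem \ref{theorem_main_first}, where a categorical Torelli statement is available. That theorem, together with the flop $\wt X_3\dashrightarrow R_3$, gives
\[
\dbcoh(\PP^2,\mathcal{C}_3)\simeq\dbcoh(\PP^2,\mathcal{C}_3').
\]
Here $R_3\to\PP^2$ is a conic fibration birational to $X_3$. For $X_3$ general, or at least factorial, its discriminant is a plane sextic $\Delta\subset\PP^2$, and the even Clifford algebra $\mathcal{C}_3$ is encoded by the double cover $\wt\Delta\to\Delta$ associated with the conic bundle.

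The first step is to recover the discriminant data from the category. For conic bundles over $\PP^2$ there is a known reconstruction: the equivalence class of $\dbcoh(\PP^2,\mathcal{C}_0)$ determines the pair $(\Delta,\wt\Delta\to\Delta)$, up to isomorphism. One can argue this in two ways.
\begin{itemize}
\item Via Hochschild homology and the intermediate Jacobian: the relevant piece is $HH_{\pm1}$, of dimension $10$, and the Prym variety $\mathrm{Prym}(\wt\Delta/\Delta)\simeq J(X_3)$ (modulo the contribution of the node) is recovered as in \cite{jacovskis2021categorical}.
\item More robustly, via the Bernardara--Bolognesi type results. By Kuznetsov's description of modules over $\mathcal{C}_0$, the centre and the category on the locus of singular fibres recover $\wt\Delta\to\Delta$; alternatively one uses a Bridgeland moduli space of suitable objects in $\dbcoh(\PP^2,\mathcal{C}_3)$ that reconstructs $\PP^2$ with the Clifford algebra up to Morita equivalence.
\end{itemize}
I would pick the route used for smooth GM threefolds: reconstruct the intermediate Jacobian as a principally polarized abelian variety from the numerical/Hodge structure of the category, obtaining $\mathrm{Prym}(\wt\Delta,\Delta)\simeq\mathrm{Prym}(\wt\Delta',\Delta')$.

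The second step goes from the discriminant data back to birational geometry. Two standard conic bundles over $\PP^2$ with the same discriminant double cover $\wt\Delta\to\Delta$ (a sextic with an odd étale double cover) are birational over $\PP^2$; this is classical, since the Brauer class in $\mathrm{Br}(\CC(\PP^2))[2]$ is determined by the cover. Hence $R_3$ and $R_3'$ are birational, and therefore so are $X_3$ and $X_3'$. If the first step only yields the Prym as a ppav, I would instead invoke Torelli for Pryms of plane sextics with odd covers: the Prym map is generically injective, so generically $(\Delta,\wt\Delta)$ is recovered from the ppav. This gives the statement for general $X_3$, and the restriction to 1-nodal factorial ones keeps us within the family where the conic fibration is standard.

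The main obstacle is the first step: extracting $(\Delta,\wt\Delta)$, or the Clifford algebra up to Morita equivalence, from an abstract equivalence of $\dbcoh(\PP^2,\mathcal{C})$ categories. An abstract equivalence need not be linear over $\PP^2$. I would try first to show that any such equivalence preserves a canonical family of objects, namely the images of skyscraper-type $\mathcal{C}$-modules over points of $\PP^2$, for instance as a stable moduli component. This would make the equivalence $\PP^2$-linear up to automorphism, and hence induce a Morita equivalence of Clifford algebras. If that fails, I would fall back to the intermediate Jacobian / Prym–Torelli argument, which only needs the Hodge-theoretic invariants and gives the birational statement generically.
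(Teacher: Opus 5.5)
Your main line is the paper's first proof. There too the equivalence is turned into an isomorphism of principally polarized abelian varieties $J(R_3)\cong J(R_3')$ via Perry's abstract intermediate Jacobian. The paper makes this step legitimate by checking two things your sketch leaves implicit: that the equivalence is of Fourier--Mukai type (through the root-stack description), and that $H^1(R_3,\Oc_{R_3})=0$. It then identifies both sides with Pryms of the discriminant covers and concludes with the Brauer-class argument you describe. (Here the Prym and $HH_{\pm1}$ are $9$-dimensional, not $10$, since $h^{1,2}(\wt X_3)=9$.)

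The step that fails is Prym--Torelli. On plane sextics the Prym map is \emph{not} generically injective: generic injectivity of $\mathcal{R}_{10}\to\mathcal{A}_9$ does not restrict to this special locus. Verra \cite{verra2004prym} proved that it has degree two there. The two points of a general fibre are the discriminant covers $(C_1,\eta_1)$ and $(C_2,\eta_2)$ of the two projections $f_i\colon V\to\PP^2_i$ of a single Verra threefold $V\subset\PP^2_1\times\PP^2_2$. By Proposition \ref{prop:verra_threefolds_and_gm_threefolds}, the discriminant cover of $R_3$ is of exactly this kind. So $\mathrm{Prym}(\wt\Gamma_6/\Gamma_6)\cong\mathrm{Prym}(\wt\Gamma_6'/\Gamma_6')$ does not yield an isomorphism of double covers. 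The paper's first proof makes the same inference, citing \cite{ikeda2019global}, which concerns double coverings of elliptic curves, so it needs the same repair.

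The same example rules out your preferred Step 1 and your $\PP^2$-linearity fallback.
\begin{itemize}
\item Kuznetsov's decomposition for $f_1$, twisted by $\Oc(-H_2)$, reads $\dbcoh(V)=\langle\Bc^{(1)},\Oc(-H_1),\Oc,\Oc(H_1)\rangle$, where $\Bc^{(1)}$ is the Clifford component of $f_1$.
\item Since $\omega_V=\Oc(-H_1-H_2)$, the Serre functor moves $\Oc(H_1)$ to the front as $\Oc(-H_2)$. A left mutation then gives $\dbcoh(V)=\langle\Bc',\Oc(-H_2),\Oc(-H_1),\Oc\rangle$ with $\Bc'\simeq\Bc^{(1)}$.
\item The symmetric computation for $f_2$ gives $\dbcoh(V)=\langle\Bc'',\Oc(-H_1),\Oc(-H_2),\Oc\rangle$ with $\Bc''\simeq\Bc^{(2)}$.
\item Because $H^\bullet(V,\Oc(\pm(H_1-H_2)))=0$, the objects $\Oc(-H_1)$ and $\Oc(-H_2)$ are mutually orthogonal. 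Hence the two triples generate the same subcategory, and $\Bc'=\Bc''$.
\end{itemize}
Thus $\dbcoh(\PP^2,\Cc^{(1)})\simeq\dbcoh(\PP^2,\Cc^{(2)})$, while $(C_1,\eta_1)\not\simeq(C_2,\eta_2)$ in general. So the category does not determine the discriminant cover, and no equivalence between these two categories can be $\PP^2$-linear up to an automorphism of $\PP^2$.

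What survives, and suffices, is a weaker statement. Isomorphic Pryms force $(\Gamma_6',\wt\Gamma_6')$ to be one of the two covers attached to the same Verra threefold, and a standard conic bundle with either cover is birational to that threefold. Equivalently, you can pass to $J(V_3)\cong J(V_3')$ and apply Iliev's Torelli theorem for Verra threefolds \cite{iliev1997theta}, as in Remark \ref{rem_using_Verra_threefold}. Note finally that this argument, like the paper's first proof, only covers general $X_3$, whereas the statement carries no genericity assumption.
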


In a recent work \cite{sasha_spinor_modifications}, Kuznetsov developed a new tool called \emph{spinor modification}, to relate different conic bundles on the same base, without the need of an explicit hyperbolic equivalence (and hence without relying on the construction of a higher-dimensional quadric bundle which is related to both of them by the operation of quadric reduction). The even Clifford algebras of conic bundles related by a spinor modification are Morita equivalent. While it is known that hyperbolic-equivalent conic bundles are related by a spinor modification \cite[Corollary 1.3]{sasha_spinor_modifications}, it is not clear whether the converse holds: spinor modification induces birational equivalence, and conjecturally hyperbolic equivalence \cite[Conjecture 1.4]{sasha_spinor_modifications}. Therefore, it would be interesting to construct an explicit, abstract spinor bundle on $R_3$ which realizes $V_3$ as a spinor modification, thus producing new evidence for the conjecture, but this taksed proved to be elusive. However, we define an abstract spinor bundle on $R_3$ which induces a spinor modification to a conic fibration in the projectivization of $\Oc\oplus\Oc(-1)\oplus\Oc(-2)$, thus finding a new birational model for the conic fibrations $\wt X_3, R_3, V_3$, which shares the same Kuznetsov component up to autoequivalence. Applying a similar technique to $V_3$ produces a ``trivial'' spinor modification, i.e. an isomorphism: in fact, we show that the associated abstract spinor bundle is \emph{canonical}, in the sense of \cite[Lemma 2.16]{sasha_spinor_modifications}. This behavior is similar to what happens for resolutions of one-nodal Fano threefolds of genus 5 \cite[Lemma 3.17]{sasha_spinor_modifications}

\subsection*{Acknowledgments}
The authors would like to express their gratitude towards Grzegorz and Micha\l\ Kapustka for introducing them to this problem, and to Arend Bayer, Alexander\linebreak Kuznetsov, Laurent Manivel for their valuable comments on the first draft. Moreover, MR would like to thank Ignacio Barros, Soheyla Feyzbakhsh, Giovanni Mongardi and Ying Xie for helpful and inspiring suggestions. SZ thanks Zhiyu Liu, Xun Lin, Fei Peng, and Zheng Zhang for their useful discussions. Part of the work was carried out when the authors visited IBS-CGP, IMS-Shanghai Tech University, the Jagiellonian University, IMPAN, and Soochow University. We are grateful for their excellent hospitality and support. KG was supported by 2024/53/B/ST1/00161. MR was supported by Fonds voor Wetenschappelijk Onderzoek – Vlaanderen (FWO, Research Foundation – Flanders) – G0D9323N, by the European Research Council (ERC) grant agreement No. 817762, and by the Polish National Science Center project number 2024/55/B/ST1/02409. SZ is supported by SYSU Starting Grant No. 34000-12256019. The research was supported by the program Excellence Initiative at the Jagiellonian University in Krakow (ID.UJ).

\subsection*{General notation}
We shall work over the field of complex numbers. Given $m$ equations $f_1, \dots, f_m$, with the notation $\VV(f_1, \dots, f_m)$ we denote the intersection of their vanishing loci. We adopt the same notation when $f_1, \dots, f_m$ are sections of vector bundle. We use the symbol ``$\simeq$'' for isomorphisms, and ``$\sim$'' for birational equivalences. By $\dbcoh(X)$, we denote the bounded derived category of coherent sheaves of a variety $X$.

\section{Birational map to a quadric fibration}\label{sec:birational_map}
Let us fix $\scrV\simeq \CC^5$, and a decomposition $\scrV = \scrP\oplus \scrW$ where $\scrP$ has dimension two. This will induce a decomposition:
\begin{equation*}
    \wedge^2\scrV = \wedge^2\scrP \oplus \scrP\otimes \scrW \oplus \wedge^2 \scrW.
\end{equation*}
We call $\Grass:=G(2, \scrV)$ the Grassmannian parametrizing two-dimensional subspaces of $\scrV$. This is a six-dimensional smooth projective variety in $\PP(\wedge^2 \scrV)$. Define $H$ to be the Pl\" ucker hyperplane class. 
We call $Q_p$ a quadric hypersurface in $\PP(\wedge^2 \scrV)$ with a single node $p = \PP(\wedge^2(\scrP))\in \Grass$.
In the following, we shall denote by $X_n$ the intersection in $\PP(\wedge^2 \scrV)$ of $\Gr$ with $Q_p$ and a linear space of dimension nine, eight or seven. This is a one-nodal Gushel--Mukai variety of dimension, respectively, five, four or three. Let us call $n$ the dimension of such variety: then, it is realized inside $\Grass$, as the zero locus of a section of $\Oc(2H)\oplus\scrK\otimes\Oc(H)$ where $\scrK$ is a vector space of dimension $5-n$.\\
\\
% From now on, let us choose coordinates $x_1, \dots, x_5$ for $V$, and $x_{12}, \dots, x_{45}$ for $\wedge^2 V$, such that:
% \begin{equation*}
%     p = [1:0\dots 0] = \VV(x_{13}, \dots, x_{45}).
% \end{equation*}
% Then we fix the following vector spaces:
% \begin{equation*}
%     \begin{split}
%         W := & \VV(x_{12})\subset \wedge^2 V\\
%         U := & \VV(x_{34}, x_{35}, x_{45})\subset W \\
%         M := & \VV(x_{13}, \dots, x_{25})\subset W
%     \end{split}
% \end{equation*}
% of dimensions, respectively, nine, six and three. In other words, $M = \wedge^2(V/\VV(x_1, x_2))$ and $U$ is its orthogonal in $W$.
\subsection{The geometric resolution of $X_n$}
Consider the projection 
\begin{equation*}
    \begin{tikzcd}
        \pi_p: \PP(\wedge^2 \scrV)\ar[dashed]{r} & \PP(\scrP\otimes\scrW\oplus \wedge^2\scrW)
    \end{tikzcd}
\end{equation*}
from $p$. The image $\scrQ:=\pi_p(Q_p)$ is a smooth, seven-dimensional quadric hypersurface in $\PP(\scrP\otimes\scrW\oplus \wedge^2\scrW)$. On the other hand, if we restrict $\pi_p$ to $\Grass$, the resulting map contracts all lines in $\Grass$ which pass through $p$. Let us identify the node with a decomposable two-vector $p = [p_1\wedge p_2]\in \PP(\wedge^2 \scrV)$: the contracted locus of $\pi_p |_{\Grass}$ is given by:

\begin{equation*}
    \PP\left\{ p_1\wedge p_2 + u\wedge q : u\in \PP(\langle p_1, p_2\rangle), q\in \PP(\langle p_1, p_2\rangle ^\perp) \right\}\simeq \Cone_p(\PP(\scrP)\times\PP(\scrW))
\end{equation*}

Summing all up, the image of the contracted locus of $\pi_p|_{X_n}$ is the cone over the intersection of $\PP(\scrP)\times\PP(\scrW)$ with a smooth quadric and a linear space of dimension $n$ in $\PP(\scrP\otimes\scrW)$. Let us call such locus $\Gamma$. Since the degree of $\PP(\scrP)\times\PP(\scrW)\subset\PP(\scrP\otimes\scrW)$ is three, we conclude that $\Gamma$ is a smooth $n-3$-fold of degree six (six distinct points for $n=3$).\\
\\
The following lemma is a consequence of \cite[Theorem 2.2]{kuznetsov_prokhorov}. For the reader’s convenience, and to ensure uniformity of notation with the rest of the paper, we provide a proof adapted to our setting.

\begin{lemma}
    The image of $\pi_p|_{\Grass}$ in $\PP(\scrW\otimes\scrP\oplus\wedge^2\scrW)$ is the intersection of two Pl\"ucker quadrics, and it contains $\PP(\scrW\otimes\scrP)$.
\end{lemma}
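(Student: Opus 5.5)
We will argue directly in coordinates adapted to $\scrV = \scrP\oplus\scrW$. Write a point of $\PP(\wedge^2\scrV)$ as $[a : B : C]$ with $a\in\wedge^2\scrP\simeq\CC$, $B\in\scrP\otimes\scrW\simeq\Hom(\scrP^\vee,\scrW)$ (a $2\times 3$ matrix, after choosing bases), and $C\in\wedge^2\scrW$. The projection $\pi_p$ forgets $a$. A decomposable bivector $\omega = (p_1+w_1)\wedge(p_2+w_2)$, with $p_i\in\scrP$ and $w_i\in\scrW$, decomposes as
\begin{equation*}
    a = p_1\wedge p_2,\qquad B = p_1\otimes w_2 - p_2\otimes w_1,\qquad C = w_1\wedge w_2 .
\end{equation*}
The ten Pl\"ucker quadrics in $\wedge^2\scrV$ are the components of $\omega\wedge\omega\in\wedge^4\scrV\simeq\scrV^\vee$. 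We split them according to $\wedge^4\scrV = \wedge^2\scrP\otimes\wedge^2\scrW\,\oplus\,\scrP\otimes\wedge^3\scrW$, the piece $\wedge^4\scrW$ being zero since $\dim\scrW=3$.

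The $\wedge^2\scrP\otimes\wedge^2\scrW$ component (three quadrics) has the form $a\cdot C + \tfrac12 (B\wedge B)$, so it involves $a$. The $\scrP\otimes\wedge^3\scrW$ component (two quadrics) is $B\wedge C\in \scrP\otimes\wedge^3\scrW\simeq\scrP$, and does not involve $a$. Hence the image $\pi_p(\Grass)$ is contained in the intersection $\VV(B\wedge C)$ of these two quadrics in $\PP(\scrP\otimes\scrW\oplus\wedge^2\scrW)\simeq\PP^8$. These are the two Pl\"ucker quadrics meant in the statement. Since $\VV(B\wedge C)$ vanishes identically on $\{C=0\}$, it contains $\PP(\scrW\otimes\scrP)$.

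For the reverse inclusion, and to see that the image is all of $\VV(B\wedge C)$, we use dimensions. The map $\pi_p|_\Grass$ is birational onto its image, since lines through a general point do not pass through $p$, so the image is an irreducible sixfold. A complete intersection of two quadrics in $\PP^8$ has dimension $6$, provided the two quadrics have no common component. We then need $\VV(B\wedge C)$ to be irreducible of dimension $6$, which we check directly. On the open set $C\neq 0$, $C$ is a decomposable element of $\wedge^2\scrW$, i.e.\ a plane $L\subset\scrW$, and $B\wedge C=0$ means both columns $B(\cdot)$ lie in $L$, which imposes $2$ conditions. This locus is therefore irreducible of dimension $8-2=6$. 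Its complement $\{C=0\}=\PP^5$ has dimension $5$ and so cannot be a component. Hence the closure of the $C\neq0$ part is all of $\VV(B\wedge C)$.

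Finally, every point with $C\neq0$ lies in the image. Given $B$ with columns in $L=\langle w_1,w_2\rangle$ and $C=w_1\wedge w_2$, choose $a$ so that $a C + \tfrac12 B\wedge B = 0$. This is possible because $B\wedge B\in\wedge^2 L$ is proportional to $C$. The image is closed, so it equals $\VV(B\wedge C)$. The main obstacle is keeping track of the sign and normalization conventions in the decomposition of $\omega\wedge\omega$; the lifting step in the last paragraph is where they matter. For $n$ general, the statement is just this computation, cited from \cite[Theorem 2.2]{kuznetsov_prokhorov}.
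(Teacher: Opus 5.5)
Your proof is correct and follows essentially the paper's route: the paper writes the five Pl\"ucker quadrics in coordinates, observes that $Q_1,Q_2$ (your $B\wedge C$) do not involve $x_{12}$, and eliminates $x_{12}$ from $P_1,P_2,P_3$ via cubic relations that vanish on $\VV(Q_1,Q_2)$, which is exactly your observation that $B\wedge B$ is proportional to $C$ there. Your check that $\VV(B\wedge C)$ is irreducible of dimension six, so that the locus $\{C=0\}$ (where $x_{12}$ cannot be solved for) lies in the closure, is a detail the paper leaves implicit; one minor slip is that there are five Pl\"ucker quadrics, not ten.
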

\begin{proof}
    let us choose coordinates $x_1, \dots, x_5$ for $\scrV$, and $x_{12}, \dots, x_{45}$ for $\wedge^2 \scrV$, such that:
    \begin{equation*}
        p = [1:0\dots 0] = \VV(x_{13}, \dots, x_{45}).
    \end{equation*}
    Hence, we have:
    \begin{equation*}
        \begin{split}
            \scrP & = \VV(x_3, x_4, x_5) \subset \scrV\\
            \scrW & = \VV(x_1, x_2) \subset \scrV.
        \end{split}
    \end{equation*}
    The Grassmannian $\Grass$ is the zero locus of the following Pl\"ucker quadrics:
     \begin{equation*}
         \begin{split}
            & P_1: \quad x_{12} x_{34} - x_{13} x_{24} + x_{14} x_{23} = 0 \\
            & P_2: \quad x_{12} x_{35} - x_{13} x_{25} + x_{15} x_{23} = 0 \\
            & P_3: \quad x_{12} x_{45} - x_{14} x_{25} + x_{15} x_{24} = 0 \\
            & Q_1: \quad x_{13} x_{45} - x_{14} x_{35} + x_{15} x_{34} = 0 \\
            & Q_2: \quad x_{23} x_{45} - x_{24} x_{35} + x_{25} x_{34} = 0
         \end{split}
     \end{equation*}
     The image of the projection from $p$ is thus the zero locus of $Q_1$, $Q_2$. In fact, we can use $P_1$, $P_2$, $P_3$ to eliminate $x_{12}$ and obtain three cubic relations which identically vanish in the zero locus of $Q_1$, $Q_2$. The proof is completed once we observe that the latter contains the linear space given by the conditions $x_{34}=x_{35}=x_{45}=0$, which is precisely $\PP(\scrW\otimes\scrP)$.
\end{proof}
One immediately has the following:
\begin{corollary}
    The variety $T_n:=\im(\pi_p|_{X_n})\subset\PP(\scrP\otimes\scrW \oplus \wedge^2\scrW)$ is the intersection of the smooth quadric $\scrQ$ with $Q_1$, $Q_2$ and a linear space of codimension $5-n$. Moreover, the singular locus of $T_n$ is $\Gamma$.
\end{corollary}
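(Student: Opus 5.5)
The first claim follows from the lemma once we set up the projection. Write $X_n = \Gr\cap Q_p\cap L$, where $L$ is the linear space of codimension $5-n$ in $\PP(\wedge^2\scrV)$ passing through $p$; its hyperplanes correspond to $\scrK$. Since $Q_p$ is a cone with vertex $p$, its equation involves no $x_{12}$, so $Q_p=\pi_p^{-1}(\scrQ)$. The same holds for $L$, since its equations are linear forms vanishing at $p$ and hence also free of $x_{12}$; write $L=\pi_p^{-1}(L')$ with $L'$ of codimension $5-n$. Therefore
\[
\pi_p(X_n)=\pi_p(\Gr\setminus p)\cap\scrQ\cap L'.
\]
The lemma gives $\overline{\pi_p(\Gr)}=\VV(Q_1,Q_2)$. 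It remains to check surjectivity on this intersection. A point of $\VV(Q_1,Q_2)$ with some $x_{ij}\neq0$ for $3\le i<j$ determines $x_{12}$ uniquely from $P_1,P_2,P_3$, since all three relations are compatible by the lemma's argument. A point of $\PP(\scrP\otimes\scrW)$ lies in the image exactly when it lies on the cone of contracted lines. So I will argue that the image of $X_n$ is closed, being the image of the projective variety $\Bl_p X_n$. Dimension and irreducibility then show that $T_n$ is the complete intersection $\scrQ\cap Q_1\cap Q_2\cap L'$: for general data it has dimension $n$, it is irreducible, and it contains the image.

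For the singular locus, I will use the resolution $\wt X_n=\Bl_pX_n\to T_n$. Away from $\PP(\scrP\otimes\scrW)$ this map is an isomorphism onto its image, because $x_{12}$ is recovered from the Plücker relations. So $T_n$ is smooth there, $X_n$ being smooth off $p$. Inside $\PP(\scrP\otimes\scrW)\cap T_n$ the fibres are positive dimensional exactly over $\Gamma$. The exceptional divisor $E$, a quadric of dimension $n-1$, maps isomorphically to $\VV(x_{34},x_{35},x_{45})\cap\scrQ\cap L'$, and the contracted lines map to $\Gamma$.

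To show $T_n$ is singular precisely along $\Gamma$, I will compute the Jacobian directly. On $\PP(\scrP\otimes\scrW)$ the differentials of $Q_1,Q_2$ are the linear forms $x_{13}dx_{45}-x_{14}dx_{35}+x_{15}dx_{34}$ and $x_{23}dx_{45}-x_{24}dx_{35}+x_{25}dx_{34}$. These are linearly dependent exactly when the $2\times3$ matrix $(x_{ij})$ has rank $\le1$, that is, on $\PP(\scrP)\times\PP(\scrW)$. Off the Segre, the two differentials together with $d\scrQ$ and the linear forms of $L'$ should be independent for general choices. For $d\scrQ$ this is automatic, since the two differentials involve only $dx_{34},dx_{35},dx_{45}$, while $d\scrQ$ restricted to $\PP(\scrP\otimes\scrW)$ is nonzero for $\scrQ$ general. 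Hence the singular points on $\PP(\scrP\otimes\scrW)$ are exactly the points of the Segre meeting $\scrQ\cap L'$, which is $\Gamma$. Conversely, at points of $\Gamma$ the rank drops, so $T_n$ is singular there.

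The main obstacle is the independence claim off the Segre together with the generality of $\scrQ$ and $L'$; I will handle it by a Bertini-type dimension count on the pencil of the Jacobian conditions.
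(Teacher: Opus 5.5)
Your route is the one the paper intends. The paper calls the corollary immediate from the lemma, since $Q_p$ and the linear space are cones with vertex $p$. Your isomorphism off $\PP(\scrP\otimes\scrW)$, together with the Jacobian check on $\PP(\scrP\otimes\scrW)$, correctly makes the singular-locus claim explicit.

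\textbf{The gap.} The one real gap is the step you defer. Write $\ell_1,\dots,\ell_{5-n}$ for the linear forms cutting out $L'$. You need $dQ_1,dQ_2,d\scrQ,d\ell_1,\dots,d\ell_{5-n}$ to be independent at every $t\in T_n\cap\PP(\scrP\otimes\scrW)$ off the Segre. Checking separately that $d\scrQ\notin\langle dQ_1,dQ_2\rangle$ and that the $d\ell_k$ are independent does not give this. A Bertini count would at best give it for general data. In fact no generality is needed:
\begin{itemize}
\item At such $t$, the forms $dQ_1,dQ_2$ involve only $dx_{34},dx_{35},dx_{45}$, so they vanish on the subspace $\scrP\otimes\scrW$.
\item Restricting a relation $a_1dQ_1+a_2dQ_2+b\,d\scrQ+\sum_k c_k\,d\ell_k=0$ to $\scrP\otimes\scrW$ therefore leaves a relation among the restrictions of $d\scrQ$ and the $d\ell_k$.
\item These restrictions are independent exactly when $\scrQ\cap L'\cap\PP(\scrP\otimes\scrW)$ is smooth at $t$.
\item That intersection is the exceptional divisor $E$, the projectivized tangent cone at $p$. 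It is a smooth quadric precisely because $p$ is a node.
\end{itemize}
Hence $b=c_k=0$, and then $a_1=a_2=0$ off the Segre. So smoothness of $T_n$ along $T_n\cap\PP(\scrP\otimes\scrW)\setminus\Gamma$ follows from the node hypothesis alone.

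\textbf{Two smaller points.} Both follow from the same observation.
\begin{itemize}
\item \emph{Irreducibility and equality with $T_n$.} The complete intersection $\scrQ\cap Q_1\cap Q_2\cap L'$ meets $\PP(\scrP\otimes\scrW)$ only in $E$, which has dimension $n-1$. Every component is cut out by $8-n$ equations in $\PP^8$, so has dimension at least $n$. Hence every component meets the open set where the complete intersection is isomorphic to $X_n$ minus the cone. This gives the irreducibility and the equality with $T_n$, which you only assert ``for general data''.
\item \emph{Singularity along $\Gamma$.} The complete intersection is smooth away from $\Gamma$, so it is generically reduced. Being Cohen--Macaulay, it is then reduced. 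That is what you need to pass from ``the rank drops along $\Gamma$'' to ``$T_n$ is singular along $\Gamma$''. Alternatively: the fibre of $\wt X_n\to T_n$ over a point of $\Gamma$ is a line. A birational morphism onto a variety smooth at the image point has exceptional locus of pure codimension one there, whereas $S_X$ has codimension two.
\end{itemize}
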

Let us now blow up $p$ and investigate the second contraction of the resulting variety, i.e. the resolution of $\pi_p|_{X_n}$. One has $\Bl_p \PP(\wedge^2 \scrV)\simeq\PP(\Oc\oplus\Oc(-h)\arw\PP(\scrP\otimes\scrW\oplus \wedge^2\scrW))$, where $h$ is the hyperplane class of $\PP(\scrP\otimes\scrW\oplus \wedge^2\scrW)$. Restricting the blowup to $X_n$, we find the following picture:
\begin{equation}
    \begin{tikzcd}[row sep = huge, column sep = huge]
        E\ar[hook]{r}\ar{d}{\bar\phi} & \wt X_n \ar[hookleftarrow]{r}\ar{d}{\phi}\ar{dr}{\pi} & S_X\ar{dr}{\alpha} & \\
        p\ar[hook]{r} & X_n \ar[dashed, swap]{r}{\pi_p|_{X_n}} & T_n\ar[hookleftarrow]{r} & \Gamma
    \end{tikzcd}
\end{equation}
where $\pi$ is the resolution of the projection, $E$ is the exceptional divisor of the blowup (which is a smooth quadric), and $T_n$ is the image of the projection $\pi_p|_{X_n}$. The restriction of $\pi$ to the contracted locus yields a $\PP^1$-bundle $\alpha: S_X\arw \Gamma$. %(cf. \cite[Theorem 2.2]{kuznetsov_prokhorov}), where $S_X = \Lc(\textbf{X}, x_0)$).

\begin{lemma}\label{lem:E_and_canonical_bundles}
    One has the following linear relations:
    \begin{equation*}
        \begin{split}
            E &= H-h\\
            K_{\wt X_n} & = -(n-2)h
        \end{split}
    \end{equation*}
\end{lemma}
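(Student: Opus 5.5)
We work first on the ambient blowup $\wt\PP := \Bl_p\PP(\wedge^2\scrV)$, where the relations are transparent, and then restrict to $\wt X_n$. Write $N=\dim\PP(\wedge^2\scrV)=9$.

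\emph{The relation $E=H-h$.} The identification
\begin{equation*}
    \wt\PP\simeq\PP_{\PP^8}(\Oc\oplus\Oc(-h))
\end{equation*}
exhibits the linear system $|H-E|$ of hyperplanes through $p$ as the pullback of the hyperplanes of $\PP(\scrP\otimes\scrW\oplus\wedge^2\scrW)$, because the projection $\pi_p$ is given by the linear forms vanishing at $p$. Hence $h=H-E$ on $\wt\PP$. The exceptional divisor of $\wt X_n\to X_n$ is the restriction of the ambient exceptional divisor, since $X_n$ contains $p$ and $\wt X_n$ is its strict transform. Restricting therefore gives $E=H-h$ on $\wt X_n$.

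\emph{The canonical class.} We compute by adjunction along the chain $\wt X_n\subset\wt\Gr:=\Bl_p\Gr\subset\wt\PP$. The steps are as follows.

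\begin{enumerate}
    \item For the blowup of a smooth $m$-fold at a point, $K=\phi^*K+(m-1)E$. Since $K_{\Gr}=-5H$ and $\dim\Gr=6$, this gives $K_{\wt\Gr}=-5H+5E$.
    \item We identify the strict transforms of the cutting divisors. A quadric $Q_p$ with a node at $p$ has multiplicity $2$ there, so its strict transform is $2H-2E=2h$. Each hyperplane through $p$ has multiplicity $1$, so its strict transform is $H-E=h$.
    \item The variety $\wt X_n$ is cut out in $\wt\Gr$ by the strict transform of $Q_p$ and by $5-n$ hyperplanes through $p$. Adjunction then gives
    \begin{equation*}
        K_{\wt X_n}=\bigl(-5H+5E+2h+(5-n)h\bigr)|_{\wt X_n}=-5h+(7-n)h=-(n-2)h.
    \end{equation*}
\end{enumerate}

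The main point to justify is step (3): that $\wt X_n$ is the complete intersection in $\wt\Gr$ of exactly these strict transforms. This requires the intersection to be transverse, with no extra component supported on $E$. It suffices to check the statement on $E\cong\PP^5$, the exceptional divisor of $\wt\Gr$, viewed as the projectivized tangent space $\PP(T_p\Gr)$. Transversality on $\wt\Gr$ away from $E$ is a genericity assumption on the linear section, which we take as given. On $E$, the strict transform of $Q_p$ meets $\PP(T_p\Gr)$ in the tangent cone of $Q_p$ at $p$. The strict transforms of the hyperplanes meet it in their traces on $T_p\Gr$. For a single isolated node (a general choice), this tangent-cone quadric is smooth and the traces are general hyperplanes. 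Their intersection is therefore a smooth quadric of dimension $n-1$, which is the exceptional divisor $E$ of $\wt X_n$ as stated in the paper. Consequently the total transforms decompose exactly as $2H=2h+2E$ and $H=h+E$ on $\wt X_n$, with no extra multiplicity along $E$.

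As a sanity check, the same result follows directly from the blowup formula $K_{\wt X_n}=\phi^*K_{X_n}+(n-2)E$. This formula is valid for an ordinary node, where the discrepancy is $n-2$. With $K_{X_n}=-(n-2)H$ it gives $-(n-2)(H-E)=-(n-2)h$, which agrees.
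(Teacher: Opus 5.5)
Your proposal is correct and is essentially the paper's argument: adjunction for $\wt X_n$ as a complete intersection in $\Bl_p\Gr$, with $K_{\Bl_p\Gr}=-5H+5E=-5h$, the strict transform of the nodal quadric equal to $2H-2E=2h$, and the strict transforms of the hyperplanes through $p$ equal to $h$. The only cosmetic difference is that you read off $E=H-h$ directly from the linear system of hyperplanes through $p$, whereas the paper compares the canonical class of $\Bl_p\PP(\wedge^2\scrV)$ as a blowup and as a projective bundle; your transversality check along $E$ and the discrepancy cross-check are fine additions that the paper leaves implicit.
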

\begin{proof}
    By equating the expressions of the canonical class of $\Bl_p \PP(\wedge^2 \scrV)$ as a blowup and as a projective bundle, one can easily see that $E = H-h$. Moreover, by the normal bundle sequence associated to the embedding of $\wt X_n$ in the blowup of $\Grass$, one can compute the canonical class of $\wt X_n$. To do this, let us first observe that the blowup $\wt X_5$ sits as a divisor inside the blowup $\wt \Grass$. Hence, as divisor classes, one has $\wt X_5 = 2H-2E = 2h$. Therefore, the normal bundle of $\wt X_n$ in $\Bl_p\Grass$ will be $\Oc(2h)\oplus\scrK\otimes\Oc(h)$. Then we conclude by adjunction, once we note that $\omega_{\Bl_p\Grass}\simeq\Oc(-5H+5E) \simeq\Oc(-5h)$.
\end{proof}
\subsection{The quadric fibration}
Let us now consider the blowup of $\PP(\scrP\otimes\scrW\oplus \wedge^2\scrW)$ with center $\PP(\scrP\otimes\scrW)$. We obtain the following diagram:
\begin{equation*}
    \begin{tikzcd}[row sep = huge, column sep = huge]
        \PP(\scrP\otimes\scrW) \times\PP(\wedge^2\scrW)\ar[hook]{r}\ar{d} & \Bl_{\PP(\scrP\otimes\scrW)}\PP(\scrP\otimes\scrW\oplus \wedge^2\scrW)\ar{d}{\rho}\ar{dr} & \\
        \PP(\scrP\otimes\scrW)\ar[hook]{r} & \PP(\scrP\otimes\scrW\oplus \wedge^2\scrW) & \PP( \wedge^2\scrW)
    \end{tikzcd}
\end{equation*}
where the diagonal arrow comes from the isomorphism of the blowup with the projective bundle $\PP(\scrP\otimes\scrW\otimes\Oc\oplus\Oc(-\zeta)\arw\PP(\wedge^2\scrW))$. Here $\zeta$ denotes the hyperplane class of $\PP(\wedge^2\scrW)$.\\
\\
%If we consider the strict transform of $Q_1\cap Q_2$, we obtain again a projective bundle over $\PP(\wedge^2\scrW)$, of relative dimension four, since the equations defining the two quadrics are just linear conditions in the coordinates of the fiber.\\
%This is spelled out, in greater generality, in \cite{kuznetsov_prokhorov}. In particular, let us rewrite the following result in our notation:
The following result is formulated in \cite{kuznetsov_prokhorov} in greater generality. However, let us rephrase it specifically for our picture:
\begin{proposition}\cite[Proposition 2.10]{kuznetsov_prokhorov}\label{lem:hyperplane_class_P2}\\
    The strict transform of $Q_1\cap Q_2$ is isomorphic to the projective bundle $\PP(\scrP\otimes\Omega^1_{\PP(\scrW)}(\zeta)\oplus \Oc(-\zeta))\arw \PP(\scrW)$. Moreover, one has $\zeta = 2h-H$.
\end{proposition}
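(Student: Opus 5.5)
The plan is to work in the coordinates fixed in the proof of the previous Lemma and to exploit the fact that $Q_1, Q_2$ are \emph{linear} in the coordinates $x_{1j}, x_{2j}$ ($j=3,4,5$) of $\scrP\otimes\scrW$, with coefficients that are the coordinates $x_{34},x_{35},x_{45}$ of $\wedge^2\scrW$. On $\Bl_{\PP(\scrP\otimes\scrW)}\PP(\scrP\otimes\scrW\oplus\wedge^2\scrW)\simeq\PP(\scrP\otimes\scrW\otimes\Oc\oplus\Oc(-\zeta))$, a point is a line spanned by a vector $(\sum_i p_i\otimes v_i,\ t)$ over a point $[\omega]\in\PP(\wedge^2\scrW)$.

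\textbf{Step 1: identify the fibres.} Since $\dim\scrW=3$, every $\omega\in\wedge^2\scrW$ is decomposable, so $[\omega]\in\PP(\wedge^2\scrW)$ corresponds to a $2$-plane $K_\omega\subset\scrW$. This gives $\PP(\wedge^2\scrW)\simeq G(2,\scrW)$, which is a projective plane dual to $\PP(\scrW)$; I will identify it with the base $\PP(\scrW)$ of the statement.
\begin{itemize}
\item Up to sign, $Q_1$ reads $(x_{13},x_{14},x_{15})\wedge\omega=0$ in $\wedge^3\scrW$, and $Q_2$ is the same condition with $x_{2j}$ in place of $x_{1j}$.
\item Hence, over $[\omega]$, these equations cut out exactly the vectors with $v_1,v_2\in K_\omega$, while $t$ is unconstrained.
\item So the zero locus of these two relative linear equations (a section of $\scrP^\vee\otimes\scrW^\vee$ twisted by $\Oc(\zeta)$) in the projective bundle is $\PP(\scrP\otimes\Kc\oplus\Oc(-\zeta))$, where $\Kc$ is the rank-two tautological subbundle on $G(2,\scrW)$.
\item On the corresponding $\PP^2$, $\Kc$ is identified with $\Omega^1(\zeta)$ via the Euler sequence $0\to\Omega^1(\zeta)\to\scrW\otimes\Oc\to\Oc(\zeta)\to0$. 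This gives the claimed bundle $\PP(\scrP\otimes\Omega^1_{\PP(\scrW)}(\zeta)\oplus\Oc(-\zeta))$.
\end{itemize}

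\textbf{Step 2: this locus is the strict transform.} The locus from Step 1 is a $\PP^4$-bundle over $\PP^2$, so it is irreducible of dimension $6$. By the previous Lemma, $\dim(Q_1\cap Q_2)=6$. This locus contains the strict transform, which is also irreducible of dimension $6$, so the two coincide. This holds provided the total transform has no extra $6$-dimensional components inside the exceptional divisor $\PP(\scrP\otimes\scrW)\times\PP(\wedge^2\scrW)$; such components are excluded because the intersection of our locus with the exceptional divisor is $\{t=0\}$, which is only $5$-dimensional.

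\textbf{Step 3: the relation $\zeta=2h-H$.} Here I read $\zeta$ as pulled back to $\Bl_p\Grass$ (hence to $\wt X_n$). The composite $\Bl_p\Grass\dashrightarrow\PP(\wedge^2\scrW)$ is given by the three Plücker coordinates $x_{34},x_{35},x_{45}$, which are sections of $H$. Each Schubert divisor $\{x_{ij}=0\}\cap\Grass$ with $i,j\geq3$ parametrizes the planes meeting a fixed $3$-space containing $\scrP$. It is therefore singular along the planes contained in that $3$-space, and in particular it has multiplicity exactly $2$ at $p$. Consequently these sections lie in $|H-2E|$. Their common zero locus in $\Bl_p\Grass$ is the strict transform of the cone of planes meeting $\scrP$, which has codimension $2$, so there is no further fixed divisor. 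Hence $\zeta=H-2E$, and Lemma~\ref{lem:E_and_canonical_bundles} ($E=H-h$) gives $\zeta=H-2(H-h)=2h-H$.

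\textbf{Main obstacle.} The delicate point is the multiplicity-two claim at $p$ together with the absence of a divisorial base locus. I would verify it in local coordinates: near $p$ put $x_{12}=1$; then $x_{34}=x_{13}x_{24}-x_{14}x_{23}$ is a quadratic form in the local coordinates $x_{1j},x_{2j}$.
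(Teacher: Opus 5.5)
Your proof is correct, and it takes a different route from the paper, which gives no argument of its own: it quotes the statement from \cite[Proposition 2.10]{kuznetsov_prokhorov}, where it is part of a general treatment of projections of Mukai varieties from a point. You instead verify it directly for $G(2,5)$, in three steps:
\begin{enumerate}
\item The bilinearity of $Q_1,Q_2$: they are the two components of $u\wedge\omega\in\scrP\otimes\wedge^3\scrW$. This gives the fibrewise condition $v_1,v_2\in K_\omega$, so the strict transform is $\PP(\scrP\otimes\Kc\oplus\Oc(-\zeta))$, with $\Kc\simeq\Omega^1(\zeta)$ by the Euler sequence.
\item An irreducibility argument identifies this locus with the strict transform.
\item The local formula $x_{34}=x_{13}x_{24}-x_{14}x_{23}$ near $p$ shows each $x_{jk}$ vanishes to order exactly two at $p$. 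The common base locus in $\Bl_p\Grass$ is the strict transform of the four-dimensional cone, so there is no fixed divisor and $\zeta=H-2E=2h-H$.
\end{enumerate}

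The citation buys generality. Your route buys transparency, and it makes explicit two things the paper leaves implicit.
\begin{itemize}
\item The base is canonically $\PP(\wedge^2\scrW)\simeq\PP(\scrW^\vee)$, as in Lemma~\ref{lem:projective_bundle}. The $\PP(\scrW)$ in the statement has to be read this way, which you do.
\item The identity $\zeta=2h-H$ is a relation in the Picard group of $\Bl_p\Grass$, or of $\wt X_n$. It makes sense there because the rational map to $\PP(\wedge^2\scrW)$ is undefined only in codimension two. It appears precisely as $\zeta=H-2E$, the form used later in the proof of Theorem~\ref{thm:kuznetsov_component_clifford_component}.
\end{itemize}

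Two cosmetic remarks.
\begin{itemize}
\item On the blowup the pulled-back equations are really $t\,(v_i\wedge\omega)=0$. So Step~1 describes the residual locus after removing the factor $t$, and that is why Step~2 is needed.
\item Step~2 reads more cleanly as follows. Write $Y$ for the $\PP^4$-bundle and $E_\rho=\{t=0\}$ for the exceptional divisor of $\rho$. Since $Y$ is irreducible and $Y\not\subset E_\rho$, we get $Y=\overline{Y\setminus E_\rho}$, which is the strict transform. This makes both the appeal to irreducibility of $Q_1\cap Q_2$ and your caveat about components inside the exceptional divisor unnecessary.
\end{itemize}
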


Restricting to a linear section, we find:

\begin{lemma}\label{lem:projective_bundle}
    Call $Z\subset Q_1\cap Q_2$ the zero locus of a general section of $\scrK\otimes\Oc(h)$. Then the strict transform $\wt Z$ of $Z$ is a projective bundle of rank $n-1$ over $\PP(\wedge^2\scrP)$.
\end{lemma}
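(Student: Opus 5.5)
The plan is to realize $\wt Z$ as a projective subbundle of the bundle in Proposition \ref{lem:hyperplane_class_P2}. I read the base in the statement as the plane $\PP(\wedge^2\scrW)\simeq\PP(\scrW^\vee)$, which is the base of that bundle. I also read ``rank $n-1$'' as relative dimension $n-1$.

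Set $\Ec:=\scrP\otimes\Omega^1_{\PP(\scrW)}(\zeta)\oplus\Oc(-\zeta)$, a bundle of rank $5$, and let $\wt{Q}:=\PP(\Ec)$ be the strict transform of $Q_1\cap Q_2$. The key observation is that the class $h$ pulled back to $\wt Q$ is the relative hyperplane class $\Oc_{\PP(\Ec)}(1)$. Indeed, by construction the blowup $\Bl_{\PP(\scrP\otimes\scrW)}\PP(\scrP\otimes\scrW\oplus\wedge^2\scrW)$ is $\PP(\scrP\otimes\scrW\otimes\Oc\oplus\Oc(-\zeta))$, whose relative $\Oc(1)$ is the pullback of $h$. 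Restricting to the subbundle $\PP(\Ec)$ preserves this.

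It follows that $H^0(\wt Q,\Oc(h))=H^0(\PP^2,\Ec^\vee)$. So a section $s$ of $\scrK\otimes\Oc(h)$ corresponds to a morphism of bundles $\sigma:\Ec\arw\scrK\otimes\Oc_{\PP^2}$. Its zero locus in $\PP(\Ec)$ is fiberwise the projectivization of $\ker\sigma$.

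The main step is to show that for general $s$ the map $\sigma$ is surjective at every point of $\PP^2$. Dually, this means $\sigma^\vee:\scrK^\vee\otimes\Oc\arw\Ec^\vee$ is injective on every fiber. The argument goes in three parts:
\begin{itemize}
\item First, $\Ec^\vee=\scrP^\vee\otimes T_{\PP(\scrW)}(-\zeta)\oplus\Oc(\zeta)$ is globally generated, since $T(-1)$ and $\Oc(1)$ are globally generated on $\PP^2$.
\item Second, I will check that the linear forms cutting $Z$ (general hyperplanes of $\PP(\scrP\otimes\scrW\oplus\wedge^2\scrW)$) restrict to general elements of $H^0(\Ec^\vee)$. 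In fact the restriction map is surjective, because the fiber coordinates are exactly the linear coordinates on $\scrP\otimes\scrW\oplus\wedge^2\scrW$ restricted to the subbundle.
\item Third, by the standard Bertini-type degeneracy statement for globally generated bundles, the locus where $5-n$ general sections of the rank-$5$ bundle $\Ec^\vee$ become dependent has codimension $5-(5-n)+1=n+1\geq 4>2$. It is therefore empty on $\PP^2$.
\end{itemize}

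Hence $\Fc:=\ker\sigma$ is a vector bundle of rank $n$, and $\VV(s)=\PP(\Fc)\arw\PP^2$ is a projective bundle with fibers $\PP^{n-1}$.

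Finally, I need to identify $\VV(s)\subset\PP(\Ec)$ with the strict transform $\wt Z$. The zero locus $\VV(s)$ is the total transform of $Z$ intersected with $\wt Q$. It is smooth and irreducible of dimension $n+1=\dim Z$, and it is not contained in the exceptional divisor, because a general hyperplane section does not contain $\PP(\scrP\otimes\scrW)$ entirely. Thus it maps birationally onto $Z$, and so it coincides with the strict transform.

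I expect the delicate point to be the surjectivity of $H^0$ restriction together with the degeneracy count. The rest is formal from Proposition \ref{lem:hyperplane_class_P2}.
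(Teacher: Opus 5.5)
Your proof is correct and follows essentially the same route as the paper, which also writes $\wt Z\simeq\PP(\ker\nu)$ for a general morphism $\nu:\scrP\otimes\Omega^1_{\PP(\wedge^2\scrW)}(\zeta)\oplus\Oc(-\zeta)\arw\scrK\otimes\Oc$ and rules out degenerate fibers by a dimension count on the degeneracy locus. Your version fills in steps the paper leaves implicit (or delegates to Kuznetsov--Prokhorov): that $h$ is the relative hyperplane class, that linear forms restrict onto all sections of your $\Ec^\vee$, the explicit codimension $n+1$, and the identification of $\VV(s)$ with the strict transform; your reading of the base as $\PP(\wedge^2\scrW)$ with fibers $\PP^{n-1}$ matches the paper's proof.
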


\begin{proof}
    For $n=5$ this is simply Proposition \ref{lem:hyperplane_class_P2}, so let's assume $n = 3, 4$. By applying \cite[Theorem 3.1]{kuznetsov_prokhorov}, we see that $\wt Z\arw\PP(\wedge^2\scrW)$ is a fibration with general fiber isomorphic to $\PP^{n-1}$. Such fibration has degenerate fibers on the degeneracy locus of a general morphism:
    
    \begin{equation}\label{eq:kernel_nu}
        \nu: \scrP\otimes\Omega^1_{\PP(\wedge^2\scrW)}(\zeta)\oplus \Oc(-\zeta)\arw\scrK\otimes\Oc.
    \end{equation}
    
    By dimension count, we see that the degeneracy locus is empty for both $n=3$ and $n=4$: thus, $\ker(\nu)$ is a vector bundle of rank $n$, and $\wt Z\simeq \PP(\ker(\nu))$.
\end{proof}

By further restricting $\rho$, we deduce the following:

\begin{lemma}\label{lem:quadric_fibration}
    Let us call $R_n$ the strict transform of $T_n$ with respect to $\rho$. Then, there is a diagram:
    \begin{equation*}
        \begin{tikzcd}
            & S_R \ar[hook]{r}\ar[swap]{dl}{\beta} & R_n \ar[hookleftarrow]{r}\ar{dl}{\rho}\ar[swap]{rd}{\psi} & \psi^{-1}(\Delta)\ar{dr} & \\
            \Gamma \ar[hook]{r} & T_n & & \PP(\wedge^2\scrW) \ar[hookleftarrow]{r} & \Delta
        \end{tikzcd}
    \end{equation*}
    where:
    \begin{enumerate}
        \item $\psi: R_n\arw \PP(\wedge^2\scrW)$ is a quadric fibration of relative dimension $n-2$, and its discriminant divisor $\Delta$ is a smooth curve of degree six
        \item The restriction of $\rho$ to $R_n$ is a small contraction, which contracts a $\PP^1$-bundle $S_R$ to $\Gamma\subset T_n$.
        \item The canonical bundle of $R_n$ is $\omega_{R_n} \simeq \Oc(-(n-2)h)$
    \end{enumerate}
\end{lemma}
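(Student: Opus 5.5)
We propose to realize $R_n$ as a divisor inside the projective bundle $\wt Z\arw\PP(\wedge^2\scrW)$ of Lemma \ref{lem:projective_bundle} and read off all three claims from its divisor class. Concretely, $T_n = \scrQ\cap Z$, where $Z = Q_1\cap Q_2\cap\VV(\scrK\otimes\Oc(h))$. We will first check that $\scrQ$ contains $\PP(\scrP\otimes\scrW)$ to multiplicity one: since $Q_p$ is nodal at $p$ and contains $\Grass$'s cone of lines through $p$, the projected quadric $\scrQ$ is the quadric $Q_p$ in the coordinates with $x_{12}$ eliminated. Restricted to the blown-up locus, the exceptional divisor of $\rho$ on $\wt Z$ is the section corresponding to the summand $\Oc(-\zeta)$, of class $h-\zeta$. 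Hence the strict transform satisfies
\begin{equation*}
R_n \sim 2h - (h-\zeta) = h+\zeta
\end{equation*}
inside $\wt Z = \PP_{\PP(\wedge^2\scrW)}(\ker\nu)$. Here $h$ is the relative $\Oc(1)$. A divisor of class $2h - (\text{exceptional})$ that is the strict transform of a quadric containing the center is, in fact, fiberwise a quadric: the equation of $\scrQ$ has no term quadratic in the $\wedge^2\scrW$ coordinates once we pass to the bundle. So $R_n$ is cut out by a section of $\Sym^2(\ker\nu)^\vee\otimes\Oc(\zeta)$, i.e. a quadric fibration $\psi$ of relative dimension $(n-1)-1=n-2$.

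For (1), the discriminant of a family of quadrics in $\PP(\mathcal{E})$, with $\mathcal{E}=\ker\nu$ of rank $n$, given by $\sigma\in H^0(\Sym^2\mathcal{E}^\vee\otimes\Lc)$, is the divisor of $\det\sigma$. Its class is $2c_1(\mathcal{E}^\vee)+n\,c_1(\Lc)$. From \eqref{eq:kernel_nu}, $c_1(\ker\nu)=c_1(\scrP\otimes\Omega^1(\zeta))-\zeta = 2(-3\zeta+3\zeta)\cdot$ correction; we will compute it honestly on $\PP^2$ using $c_1(\Omega^1_{\PP^2}(\zeta))=-\zeta$, giving $c_1(\ker\nu)=-3\zeta$. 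Then $\deg\Delta = 6+n\cdot\deg\Lc$ needs normalizing; we will fix $\Lc$ by matching with the class $h+\zeta$ and check that the total is $6$ (for $n=3$, a conic bundle with sextic discriminant, consistent with $\Gamma$ being six points). Smoothness of $\Delta$ for general choices will come from a Bertini-type argument: corank-two fibers occur in codimension three for a general symmetric map, which is empty over a surface. The genericity of $Q_p$ among quadrics nodal at $p$ ensures $\sigma$ is general enough. This point is the main obstacle. The parameter space of $\sigma$ arising from nodal quadrics may not be all of $H^0(\Sym^2\mathcal{E}^\vee\otimes\Lc)$, so we must verify that the induced linear system of symmetric maps is base-point-free enough to apply the standard transversality argument. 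We would try to show this by exhibiting the surjection from quadrics on $\PP(\scrP\otimes\scrW\oplus\wedge^2\scrW)$ vanishing on nothing special.

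For (2), $\rho|_{R_n}$ is an isomorphism away from $\rho^{-1}(\PP(\scrP\otimes\scrW))\cap R_n$. Over a point of $T_n\cap\PP(\scrP\otimes\scrW)$, the fiber of $\rho$ is the set of $[\omega]\in\PP(\wedge^2\scrW)$ satisfying the linear conditions imposed by $Q_1,Q_2$ and by the tangent directions of $\scrQ$. Generically this fiber is empty or a point, and it is a $\PP^1$ exactly where the conditions drop rank, namely over the singular locus $\Gamma$ of $T_n$ (Corollary above). This gives the $\PP^1$-bundle $S_R\arw\Gamma$ of dimension $n-2$, which has codimension $2$, so the contraction is small.

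Finally, (3) follows by adjunction in $\wt Z=\PP(\mathcal{E})$. We have $K_{\wt Z}=-nh+\psi^*(K_{\PP^2}+c_1(\mathcal{E}^\vee))$, and $K_{\PP^2}=-3\zeta$. Adding $[R_n]=h+\zeta$ and using $c_1(\mathcal{E})$ computed above, the $\zeta$ terms cancel and leave $-(n-1)h+h = -(n-2)h$. Alternatively, since $\rho|_{R_n}$ is small, $K_{R_n}$ is the strict transform of $K_{T_n}$; pulling back along $\pi$ and using Lemma \ref{lem:E_and_canonical_bundles} gives $-(n-2)h$ directly. This gives a useful cross-check on the Chern class computation.
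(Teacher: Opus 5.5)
\textbf{There is a genuine gap at the first step, and it propagates to all three items.}

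The quadric $\scrQ$ does \emph{not} contain $\PP(\scrP\otimes\scrW)$. It is the smooth quadric in $\PP^8$ over which $Q_p$ is a cone, and a smooth seven-dimensional quadric contains no linear space of dimension larger than $3$. Nor does $Q_p$ contain the cone of lines of $\Grass$ through $p$. It meets that cone only along the cone over $\scrQ\cap(\PP(\scrP)\times\PP(\scrW))$, which is why $\Gamma$ has dimension $n-3$ rather than $n-2$.

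The class $h+\zeta$ you computed is the class of the strict transforms of $Q_1$ and $Q_2$. Those two quadrics do contain the center with multiplicity one, since their equations are bilinear in the $\scrP\otimes\scrW$ and $\wedge^2\scrW$ coordinates; this is exactly how the bundle $\scrP\otimes\Omega^1(\zeta)\oplus\Oc(-\zeta)$ of Proposition \ref{lem:hyperplane_class_P2} arises. It is not the class for $\scrQ$. Since $\scrQ$ cuts $\PP(\scrP\otimes\scrW)\cap\VV(\scrK\otimes\Oc(h))$ in a proper divisor $D$, the total transform $\rho^*\scrQ\cap\wt Z$ meets the exceptional divisor of $\wt Z$ only in dimension $n-1$. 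Hence it has no exceptional component, and $R_n\in|2h|$ on $\wt Z$.

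With the class $h+\zeta$, the argument fails in each item:
\begin{itemize}
\item[(1)] The class restricts to $\Oc(1)$ on the fibers of $\wt Z$, so $R_n$ would be a $\PP^{n-2}$-subbundle rather than a quadric fibration, contradicting your next sentence. Your suggested $\Sym^2(\ker\nu)^\vee\otimes\Oc(\zeta)$ would instead give class $2h+\zeta$ and a discriminant of degree $6+n$, not $6$.
\item[(3)] Adjunction gives $K_{R_n}=-nh+h+\zeta=-(n-1)h+\zeta$. The cancellation of $\zeta$ and the extra $+h$ in your computation do not occur.
\item[(2)] The ``tangent directions of $\scrQ$'' impose no condition. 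For $x\in D$, the whole fiber of $\wt Z\to Z$ over $x$ already lies in $R_n$. That fiber is cut out in $\PP(\wedge^2\scrW)$ by the two bilinear equations from $Q_1,Q_2$, whose coefficient matrix is $x$ viewed as a $2\times 3$ matrix. So it is a point off the Segre variety $\PP(\scrP)\times\PP(\scrW)$ and a line on it, which gives the $\PP^1$-bundle $S_R$ over $\Gamma$.
\end{itemize}

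Once you replace $h+\zeta$ by $2h$, your own ingredients finish the proof exactly as the paper does. From $c_1(\ker\nu)=-3\zeta$ the discriminant class is $2c_1(\ker\nu^\vee)=6\zeta$. From $K_{\wt Z}=-nh$ and adjunction, $K_{R_n}=-nh+2h=-(n-2)h$.

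Your alternative argument for (3) is sound on its own, given (2) and the smoothness of $R_n$. Both $\wt X_n\to T_n$ and $R_n\to T_n$ are small, so $R_n$ and $\wt X_n$ have the same Weil class groups off codimension two. The claim then follows from Lemma \ref{lem:E_and_canonical_bundles}. This is a genuinely different route from the paper's adjunction computation.
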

\begin{proof}
    Since $\Oc(h)$ is a relative hyperplane bundle with respect to the projective bundle structure $\wt Z\arw\PP(\wedge^2\scrW)$, it is clear that $R_n$ is a quadric fibration of relative dimension $n-2$. The degree of the discriminant is given by the degree of $\det(\ker(\nu^\vee))^{\otimes 2} = \Oc(6\zeta)$. Smoothness follows by dimensional reasons.\\
    To prove the second point, let us observe that the strict transform is a blowup of $T_n$ in a divisor $D$ given by intersecting $\PP(\scrP\otimes\scrW)$ with a general section of $\scrK\otimes\Oc(h)\oplus\Oc(2h)$. The singular locus of $T_n$, which we called $\Gamma$, is contained in $D$, and its preimage through the strict transform is the projectivization of the normal bundle $\Nc_{\Gamma | D}$. The latter is a rank two vector bundle.\\
    To address the last item, recall that $R_n$ is the zero locus of a section of $\Oc(2h)$ in $\wt Z$, and the latter is the projectivization of $\ker(\nu)$ over $\PP(\wedge^2\scrW)$. Then, by combining Equation \ref{eq:kernel_nu} with the relative tangent bundle sequence of $\wt Z\arw \PP(\wedge^2\scrW)$, we obtain $\omega_{\wt Z} \simeq \Oc(-nh)$, and we conclude by adjunction.
\end{proof}

\subsection{A relative Atiyah flop}
Summing all up, we obtain a birational map $\mu:\wt X_n\dashrightarrow R_n$ with contracted loci given by $S_X$ and $S_R$ respectively. We can resolve such map by blowing up $S_X$ and $S_R$. By Lemma \ref{lem:E_and_canonical_bundles} and Lemma \ref{lem:quadric_fibration}, we see that $\mu$ is crepant: hence, by \cite{duoli}, both blowups have the same exceptional divisor, which we will denote by $F$. Such divisor has a $\PP^1\times\PP^1$ fibration over $\Gamma$ given by the map $\pi:=\alpha\circ\bar{p}=\beta\circ\bar{q}$. Such construction is a \emph{relative Atiyah flop}, which is an example of a \emph{generalized Grassmann flop} \cite{kanemitsu, mypaper_roofbundles, ourpaper_generalizedroofs, ourpaper_k3s, leungxie, ourpaper_D5}. 
\begin{equation}\label{eq_keqdiagram}
    \begin{tikzcd}[row sep=large, column sep = normal]
        &   & F\ar[hook]{d}{\iota}\ar[swap]{lldd}{\bar p}\ar{rrdd}{\bar q} &  &\\  
        &   & \Xc\ar[swap]{ld}{p}\ar{rd}{q} & &\\
        S_X\ar[hook]{r}\ar[swap]{rrdd}{\alpha}&   \wt X_n\ar[dashed]{rr}{\mu}\ar[swap]{dr} & & R_n\ar[hookleftarrow]{r}\ar{dl}{\rho} &S_R\ar{lldd}{\beta}\\
        &&T_n\ar[hookleftarrow]{d}&&\\
        &&\Gamma&&
    \end{tikzcd}
\end{equation}

\begin{remark}
    One can also see $\mu$ as the restriction to $\wt X_n$ of the birational map described in \cite[Diagram 2.1.4]{kuznetsov_prokhorov} where, in their notation, \textbf{X} $=\Grass$. See, in particular, \cite[Section 2.3]{kuznetsov_prokhorov}.
\end{remark}

\begin{remark}
    Let us briefly describe how to extend Diagram \ref{eq_keqdiagram} to the case of nodal Gushel--Mukai surfaces. Let us call $\wt X_2$ the blowup of a nodal $K3$ surfsce of degree 10 in the node. Then, $\wt X_2$ is a smooth $K3$ surface \cite[III. Proposition 3.5]{BarthHulekPetersVandeVen2004}. The restriction  of the morphism $\wt X_3\arw T_3$ to a general hyperplane section is an isomorphism (the hyperplane in $T_3$ avoids $\Gamma$), and therefore $\rho$ restricts to the blowup of a smooth divisor (another isomorphism).
   % {\color{purple}
    We get the following picture:
    \begin{equation*}
        \begin{tikzcd}
            E\ar[hook]{r}\ar{d} & \wt X_2\ar{d}\ar[dashed]{rr}{\mu}\ar[equals]{dr} & & R_2\ar{d}{2:1}\ar[equals]{dl}\ar[hookleftarrow]{r} & \Delta\ar[equals]{d}\\
            \{p\}\ar[hook]{r} & X_2 & T_2 & \PP^2 \ar[hookleftarrow]{r} & \Delta
        \end{tikzcd}
    \end{equation*}
     Summing all up, this diagram recovers the description of the second contraction of $\wt X_2$ as a double cover of $\PP^2$ branched in the sextic curve $\Delta$. Therefore, $\wt X_2$ is a smooth $K3$ surface of degree 2 as expected.
  %   }
\end{remark}

\begin{remark}
    Besides ordinary GM varieties, one can consider \emph{special} ones, i.e. double covers of a linear section of $G(2, \scrV)$ branched in an ordinary GM variety. In fact, one can carry out a geometric construction like the one we describved abovem, and lift it to double covers, realizing a similar flop picture. However, the at the level of derived categories, the picture becomes substantially more complicated, and it is currently unclear to the authors if the approach of Section \ref{sec:categorical_resolution} can be suitably generalized to it. This will be object of further study.
\end{remark}

\section{The categorical resolution of the Kuznetsov component}\label{sec:categorical_resolution}
The description of $\mu$ as an Atiyah flop (Diagram \ref{eq_keqdiagram}) lifts to a derived equivalence $\dbcoh(\wt X_n)\simeq \dbcoh(R_n)$ \cite{bondal_orlov_flop}. Let us give an explicit construction of the equivalence functor.

To this purpose, we briefly recall the notion of \emph{mutations} of a semiorthogonal decomposition. While we summarize the minimal technical notions to keep the exposition self-contained, we refer to \cite{huyb-book-FM} for a thorough explanation of the subject. Given a semiorthogonal decompositon $\Tc\subset \langle\Ac, \Bc\rangle$ of a triangulated category $\Tc$, there are admissible subcategories $\LL_\Ac\Bc\subset\Tc$ and $\RR_\Bc\Ac\subset\Tc$ such that one has:

\begin{equation*}
    \langle\Ac, \Bc\rangle = \langle\LL_\Ac\Bc, \Ac\rangle = \langle\Bc, \RR_\Bc\Ac\rangle.
\end{equation*}

These operations on semiorthogonal decompositions are called mutations. While we will not need the explicit description of $\LL_\Ac\Bc$ and $\RR_\Bc\Ac$ in full generality, let us recall what happens in the case in which $\Ac$ and $\Bc$ are generated by a single object. Set $\Ac = \langle A\rangle$ and $\Bc = \langle B\rangle$, where $A$ and $B$ are exceptional objects. Then, the corresponding mutations are also generated by a single exceptional object, which has the following expression:

\begin{equation*}
    \begin{split}
         \LL_A B & = \operatorname{Cone}(A\otimes R^\bullet\Hom(A, B)\arw B) \\
         \RR_B A & = \operatorname{Cone}(A \arw R^\bullet\Hom(A, B)^\vee\otimes B)[-1]
    \end{split}
\end{equation*}

Note that if $R^\bullet\Hom(A, B)$ is concentrated in degree zero, then one has:
\begin{enumerate}
    \item If $A\otimes R^\bullet\Hom(A, B)\arw B$ is injective (surjective), then, up to shift, $\LL_A B$ is quasi-isomorphic to its cokernel (kernel)
    \item If $A \arw R^\bullet\Hom(A, B)^\vee \otimes B$ is injective (surjective), then, up to shift, $\RR_B A$ is quasi-isomorphic to its cokernel (kernel)
\end{enumerate}
Moreover, if if $R^\bullet\Hom(A, B)$ is concentrated in degree one and has dimension one, $\LL_A B$ and $\RR_B A$ are quasi-isomorphic to the corresponding unique extension $C$ fitting in a short exact sequence
\begin{equation*}
    0\arw B\arw C\arw A\arw 0.
\end{equation*}

\begin{proposition}
    There is an equivalence of triangulated categories:
    \begin{equation*}
        q^*\dbcoh(R_n) \simeq \left[\RR_{\iota_*\pi^*\dbcoh(\Gamma)}p^*\dbcoh(\wt X_n)\right]\otimes\Oc(-H) 
    \end{equation*}
\end{proposition}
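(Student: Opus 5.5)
We apply Orlov's blowup formula to both sides of the flop diagram \eqref{eq_keqdiagram} and then compare the two resulting decompositions of $\dbcoh(\Xc)$. The resolution $\Xc$ is the blowup of $\wt X_n$ along $S_X$ and also the blowup of $R_n$ along $S_R$, with common exceptional divisor $F$. Both $S_X\to\Gamma$ and $S_R\to\Gamma$ are $\PP^1$-bundles, and $F\to\Gamma$ is a $\PP^1\times\PP^1$-fibration. Since $S_X$ has codimension two in $\wt X_n$, Orlov's formula gives
\begin{equation*}
    \dbcoh(\Xc) = \langle \iota_*\bar p^*\dbcoh(S_X)\otimes\Oc(F),\ p^*\dbcoh(\wt X_n)\rangle,
\end{equation*}
and symmetrically
\begin{equation*}
    \dbcoh(\Xc) = \langle \iota_*\bar q^*\dbcoh(S_R)\otimes\Oc(F),\ q^*\dbcoh(R_n)\rangle.
\end{equation*}
Next we decompose each $\dbcoh(S_X)$, $\dbcoh(S_R)$ with the projective bundle formula as two copies of $\dbcoh(\Gamma)$, twisted by relative $\Oc(-1),\Oc$. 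Pulled back to $F$, these pieces become $\iota_*\pi^*\dbcoh(\Gamma)\otimes\Oc_F(a,b)$ for suitable bidegrees $(a,b)$ on the $\PP^1\times\PP^1$ fibers.

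To identify the line bundles we use Lemma \ref{lem:E_and_canonical_bundles} and the relations $E=H-h$, $\zeta=2h-H$. The class $H$ has degree one on the fibers of $S_X\to\Gamma$, since these are lines through $p$ made disjoint from $E$ after blowing up. It has degree $0$ or $-1$ on the fibers of $S_R$, because $\rho$ contracts $S_R$ and $H=2h-\zeta$ restricts to $-\zeta$ there. The class $h$ is trivial on the fibers of both $S_X$ and $S_R$, since both are contracted to $\Gamma$. Finally, $\Oc(F)|_F\simeq\Oc(-1,-1)$ relatively over $\Gamma$. Consequently, twisting by $\Oc(H)$ shifts one bidegree component by one while preserving $q^*$ of line bundles up to the appropriate correction. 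This is the standard computation for the Bondal--Orlov flop, now done relatively over $\Gamma$.

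The core step is the mutation. Starting from the first decomposition, we right-mutate $p^*\dbcoh(\wt X_n)$ through the piece $\iota_*\pi^*\dbcoh(\Gamma)$ lying immediately to its left, namely the summand with bidegree twist $\Oc_F(-1,-1)$ or its appropriate relative. This gives
\begin{equation*}
    \dbcoh(\Xc)=\langle \iota_*\pi^*\dbcoh(\Gamma)\otimes\Oc_F(\cdot),\ \RR_{\iota_*\pi^*\dbcoh(\Gamma)}p^*\dbcoh(\wt X_n),\ \iota_*\pi^*\dbcoh(\Gamma)\rangle .
\end{equation*}
Tensoring by $\Oc(-H)$ and, if needed, applying a Serre-functor rotation ($\omega_\Xc=p^*\omega_{\wt X_n}\otimes\Oc(F)=q^*\omega_{R_n}\otimes\Oc(F)$), we aim to show that the two $\Gamma$-pieces become exactly the two pieces $\iota_*\bar q^*\dbcoh(S_R)\otimes\Oc(F)$ of the second decomposition, in the correct order. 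Since semiorthogonal complements are unique, the remaining components must then coincide, which yields the claimed equivalence.

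The main obstacle is the bookkeeping of bidegrees. We must check that after twisting by $\Oc(-H)$ the $\Gamma$-pieces match precisely with those coming from $S_R$, including the relative twist by $\det$ of the normal bundles, which matters because $\Gamma$ is not a point. I would first verify this fiberwise over a point of $\Gamma$, where it reduces to the classical Atiyah flop computation of Bondal--Orlov. I would then globalize using the fact that all relevant functors are $\Gamma$-linear, so that only line bundles pulled back from $\Gamma$ can appear. These are absorbed, because each piece is a full $\pi^*\dbcoh(\Gamma)$ and is therefore stable under tensoring by pullbacks from $\Gamma$.
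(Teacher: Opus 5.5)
Your overall strategy (Orlov's blowup formula on both sides, then comparing orthogonal complements) matches the paper's. However, there is a genuine gap in the core mutation step, and that step is the whole content of the proposition.

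\textbf{The mutation step does not work as written.} A right mutation is defined only through a piece lying to the \emph{right}: $\langle\Ac,\Bc\rangle=\langle\Bc,\RR_\Bc\Ac\rangle$, and the mutated category ends up at the right end. In Orlov's decomposition $p^*\dbcoh(\wt X_n)$ is already the last component, so ``right-mutating it through the piece immediately to its left'' is not an operation. The decomposition you display, with the mutated category sitting between two copies of $\iota_*\pi^*\dbcoh(\Gamma)$, is not produced by any right mutation; it has the shape of a left mutation $\LL$, which is a different subcategory.

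You then only \emph{aim} to show that twisting by $\Oc(-H)$ matches the $\Gamma$-pieces with those of the second decomposition, and you flag this as the main obstacle. This bookkeeping is not innocuous. Write $\Ac_L:=\iota_*\pi^*\dbcoh(\Gamma)\otimes\Oc(L)$. Record restrictions to the fibers $\PP^1\times\PP^1$ of $F\to\Gamma$ as bidegrees $(a,b)$, with $a$ along fibers of $\bar q$ and $b$ along fibers of $\bar p$. Then $F|_F=(-1,-1)$, $p^*H|_F=(1,0)$, $\xi=(0,1)$ and $h|_F=(0,0)$, and $\Ac_L$ depends only on the bidegree of $L$.

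Read your display as $\langle\Ac_{F-H},\LL_{\Ac_F}p^*\dbcoh(\wt X_n),\Ac_F\rangle$, using your twists $\Oc(-1),\Oc$. Rotating $\Ac_F$ to the front with the Serre functor gives pieces of bidegrees $(-2,-2),(-2,-1)$. These match $(-1,-1),(-1,0)$ only after a twist of bidegree $(1,1)$, and the result is $q^*\dbcoh(R_n)=\bigl[\LL_{\Ac_F}p^*\dbcoh(\wt X_n)\bigr]\otimes\Oc(H)$, not the stated formula.

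Two smaller corrections to your degree claims:
\begin{itemize}
\item Strict transforms of lines through $p$ meet $E$ in one point; they are not made disjoint from it. That is exactly why $h=H-E$ is trivial on them.
\item On $\Xc$ you must use $p^*H$, which has degree $0$, not $-1$, along curves over fibers of $S_R$. It differs from $q^*(2h-\zeta)$ by $F$.
\end{itemize}

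\textbf{The missing idea} is to first create the mutating piece on the right. Apply the inverse Serre functor to $\Ac_F$ in $\langle\Ac_F,\Ac_{F+H},p^*\dbcoh(\wt X_n)\rangle$. Since $K_\Xc=p^*K_{\wt X_n}+F=(2-n)h+F$ and $h|_F$ is pulled back from $\Gamma$, it becomes $\Ac_{F-K_\Xc}=\iota_*\pi^*\dbcoh(\Gamma)$ with no twist. This is exactly the category in the statement, now to the right of $p^*\dbcoh(\wt X_n)$. Right-mutating gives
\begin{equation*}
\dbcoh(\Xc)=\langle\Ac_{F+H},\ \iota_*\pi^*\dbcoh(\Gamma),\ \RR_{\iota_*\pi^*\dbcoh(\Gamma)}p^*\dbcoh(\wt X_n)\rangle,
\end{equation*}
with both $\Gamma$-pieces on the left. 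Twisting by $\Oc(-H)$ turns them into $\Ac_F$ and $\Ac_{-H}$. Since $-H|_F=(-1,0)=(F+\xi)|_F$, these coincide with the first two components of $\langle\Ac_F,\Ac_{F+\xi},q^*\dbcoh(R_n)\rangle$. The last component of a three-term decomposition is the left orthogonal of the first two, so $q^*\dbcoh(R_n)=\left[\RR_{\iota_*\pi^*\dbcoh(\Gamma)}p^*\dbcoh(\wt X_n)\right]\otimes\Oc(-H)$, which is the paper's argument.
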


\begin{proof}
    By Orlov's blowup formula \cite{orlovblowup} we have the following semiorthogonal decompositions:
    
    \begin{equation*}
        \begin{split}
            \dbcoh(\Xc) & = \langle \iota_*\bar p^*\dbcoh(S_X)\otimes\Oc(F), p^*\dbcoh(\wt X_n) \rangle \\
            & = \langle \iota_*\bar q^*\dbcoh(S_R)\otimes\Oc(F), q^*\dbcoh(R_n) \rangle.
        \end{split}
    \end{equation*}
    
    Now recall that both $S_X$ and $S_R$ are $\PP^1$-bundles over $\Gamma$: hence, by Orlov's projective bundle formula \cite{orlovblowup} we obtain the following:
    
    \begin{equation}\label{eq:SOD_starting_point}
        \begin{split}
            \dbcoh(\Xc) & = \langle \iota_*\pi^*\dbcoh(\Gamma)\otimes\Oc(F), \iota_*\pi^*\dbcoh(\Gamma)\otimes\Oc(F + H), p^*\dbcoh(\wt X_n) \rangle \\
            & = \langle \iota_*\pi^*\dbcoh(\Gamma)\otimes\Oc(F), \iota_*\pi^*\dbcoh(\Gamma)\otimes\Oc(F + \xi), q^*\dbcoh(R_n) \rangle
        \end{split}
    \end{equation}
    
    where $\xi$ can be chosen as any line bundle on $F$ which restricts to $\Oc(1)$ on the fibers of $\beta$. Then, we can recover the equivalence by a sequence of mutations, which will be outlined here.\\
    \\
    Let us first apply the Serre functor to the first block of the first decomposition of Equation \ref{eq:SOD_starting_point}. We obtain the following:
    \begin{equation*}
        \dbcoh(\Xc) = \langle \iota_*\pi^*\dbcoh(\Gamma)\otimes\Oc(F+ H), p^*\dbcoh(\wt X_n), \iota_*\pi^*\dbcoh(\Gamma)\otimes \Oc(F-K_\Xc) \rangle.
    \end{equation*}
    Then, we mutate the central piece one step to the right, and we twist everything by $\Oc(-H)$
    \begin{equation*}
        \begin{split}
            \dbcoh(\Xc) = & \langle \iota_*\pi^*\dbcoh(\Gamma)\otimes\Oc(F), \iota_*\pi^*\dbcoh(\Gamma)\otimes \Oc(F-H-K_\Xc), \\
            & \hspace{70pt} \Oc(-H)\otimes \RR_{\iota_*\pi^*\dbcoh(\Gamma)\otimes p^*\omega_{\wt X}^\vee}p^*\dbcoh(\wt X_n) \rangle.
        \end{split}
    \end{equation*}
    Observe that $\iota_*\pi^*\dbcoh(\Gamma)$, as an admissible subcategory in the SOD above, is invariant with respect to tensor product with powers of $\Oc(h)$, hence, to conclude, we just need to show that $(-H-K_\Xc)|_F = \xi$ up to powers of $h$. To this purpose, we just observe that $K_\Xc = K_{\wt X} + F$, which, by Lemma \ref{lem:E_and_canonical_bundles}, equals $(2-n)h+F$. By definition, $F$ restricts to $-H-\xi$ on $F$, up to powers of $h$, and this completes the proof.
\end{proof}
\subsection{Mutations inside $\dbcoh(\wt X_n)$}
In the remainder of this section, we prove that the categorical resolution of the Kuznetsov component of $X_n$ is equivalent to the derived category of modules on the even part of the Clifford algebra of $R_n$.\\
\\
The content of this first proposition is well-known to experts. Since the authors are not aware of any reference, let us give a self-contained proof.
\begin{proposition}\label{prop:SOD_of_the_resolved_GM}
    There is the following semiorthogonal decompositon:
    \begin{equation}\label{eq:sod_resolved_gm_nfold}
    \dbcoh(\wt X_n) = \langle
        \Oc_E((n-2)E), \dots, \Oc_E(E), \wt{\Kc u}(X_n), \Oc, \Uc^\vee, \dots, \Oc((n-3)H), \Uc^\vee((n-3)H)\rangle.
    \end{equation}
\end{proposition}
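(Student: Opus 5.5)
The plan is to show that the sequence $\Oc_E((n-2)E),\dots,\Oc_E(E)$, followed by $\Oc,\Uc^\vee,\dots,\Oc((n-3)H),\Uc^\vee((n-3)H)$ (all pulled back along $\phi$), is semiorthogonal in $\dbcoh(\wt X_n)$. We then \emph{define} $\wt{\Kc u}(X_n)$ as the orthogonal complement of this sequence. Since every object involved is exceptional, hence admissible, the decomposition follows formally. Two identifications from Lemma \ref{lem:E_and_canonical_bundles} will be used throughout: $E$ is a smooth quadric of dimension $n-1$, and $\Oc_E(E)\simeq\Oc_E(-1)$, the tautological line bundle of the exceptional divisor.

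We check the three types of orthogonality in turn.

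\textbf{(a) The objects $\Oc_E(kE)$.} They form an exceptional sequence by the standard argument of \cite[Theorem 1]{kuznetsov_resolutions_of_singularities}, and this is in fact the statement there. Concretely, the triangle $\Oc_E(kE)\to\Oc_E(kE)[-E]$ reduces $\Ext^\bullet(\Oc_E(iE),\Oc_E(jE))$ to the cohomology of $\Oc_E((j-i)E)$ and $\Oc_E((j-i+1)E)$. Under $\Oc_E(E)=\Oc_E(-1)$ these are $\Oc_E(i-j)$ and $\Oc_E(i-j-1)$. For $1\le j<i\le n-2$, i.e.\ $1\le i-j\le n-3$, both twists are $\ge 0$ and $\le n-3$; the vanishing needed for exceptionality is then the Kodaira-type vanishing $H^\bullet(\Oc_{Q^{n-1}}(-m))=0$ for $1\le m\le n-2$, applied in the correct direction of the sequence. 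I would take this step from loc.\ cit.

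\textbf{(b) The Grassmannian bundles.} The sequence $\Oc,\Uc^\vee,\dots,\Uc^\vee((n-3)H)$ is exceptional on the nodal $X_n$ exactly as in the smooth case \cite{kuznetsovperry}. Their proof uses the Koszul resolution of $X_n$ inside $\Grass$ by $\Oc(-2H)\oplus\scrK\otimes\Oc(-H)$ and Borel--Weil--Bott on $\Gr(2,5)$, and smoothness of $X_n$ plays no role; only the Koszul complex being exact is needed, which holds because $X_n$ is a complete intersection in $\Grass$. Pulling back by $\phi$ preserves these Ext groups, since $\phi_*\Oc_{\wt X_n}=\Oc_{X_n}$ and $X_n$ has rational singularities, so $R\phi_*\Oc=\Oc$. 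The projection formula then finishes the reduction.

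\textbf{(c) Cross terms.} We need $\Ext^\bullet(F,\Oc_E(kE))=0$ for $F$ a pullback bundle and $1\le k\le n-2$. Such an $F$ restricts trivially to $E$, so this group equals $H^\bullet(E,\Oc_E(kE))^{\oplus r}=H^\bullet(Q^{n-1},\Oc(-k))^{\oplus r}$. This vanishes for $1\le k\le n-2$, since $K_{Q^{n-1}}=\Oc(-(n-1))$.

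Once (a)--(c) hold, set $\wt{\Kc u}(X_n)$ to be the orthogonal complement of the two blocks; by admissibility we obtain \eqref{eq:sod_resolved_gm_nfold}. As a consistency check, $\langle\Oc_E(kE)\rangle^{\perp}$ is Kuznetsov's categorical resolution $\wt\Dc$, and $\wt{\Kc u}(X_n)$ is its Kuznetsov-type component. The step I expect to need the most care is the index bookkeeping in (a) and (c), namely confirming that the range of twists $k=1,\dots,n-2$ is exactly the one the vanishing on $Q^{n-1}$ allows. This must be double-checked against the discrepancy $K_{\wt X_n}=\phi^*K_{X_n}+(n-2)E$, which is consistent with Lemma \ref{lem:E_and_canonical_bundles} since $-(n-2)h=-(n-2)H+(n-2)E$. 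Everything else is a formal consequence of known results.
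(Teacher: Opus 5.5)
Your proof is correct, and its overall structure matches the paper's. You prove semiorthogonality of the torsion block $\Oc_E(kE)$, exceptionality of the pulled-back block $\Oc,\phi^*\Uc^\vee,\dots$, and vanishing of the cross terms $\Ext^\bullet(\phi^*F,\Oc_E(kE))\simeq H^\bullet(E,\Oc_E(kE))^{\oplus r}=0$; your step (c) is the paper's last step essentially verbatim. Then $\wt{\Kc u}(X_n)$ is taken to be the complement.

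The genuine difference is step (b). The paper checks exceptionality of $\Oc,\phi^*\Uc^\vee,\dots$ directly on $\wt X_n$. It resolves $\Oc_{\wt X_5}$ inside $\wt\Grass$ using the class $\wt X_5=2H-2E=2h$. It then repeatedly peels off the sequence of the exceptional divisor $\PP^5\subset\wt\Grass$, until only Borel--Weil--Bott cohomology on $\Grass$ and cohomology of $\Oc_{\PP^5}(-k)$ remain.

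You instead use $R\phi_*\Oc_{\wt X_n}\simeq\Oc_{X_n}$ (the node is a rational singularity) together with the projection formula, which gives $\Ext^\bullet_{\wt X_n}(\phi^*F,\phi^*G)\simeq H^\bullet(X_n,F^\vee\otimes G)$. You then run the Kuznetsov--Perry Koszul/BWB computation for $X_n\subset\Grass$. That computation indeed only uses exactness of the Koszul complex of the regular section of $\Oc(2H)\oplus\scrK\otimes\Oc(H)$, not smoothness of $X_n$.

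Your route is shorter and shows that the only input needed from the resolution is $R\phi_*\Oc=\Oc$. The paper's route is more hands-on and never leaves smooth varieties.

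There is one slip in (a), which you half-acknowledge with ``in the correct direction''. The group you actually display is $\Ext^\bullet(\Oc_E(iE),\Oc_E(jE))$ with $i>j$. It involves $\Oc_E(i-j)$ and $\Oc_E(i-j-1)$, which have sections, and it is not required to vanish, since $\Oc_E(iE)$ sits to the left.

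The group that must vanish is $\Ext^\bullet(\Oc_E(jE),\Oc_E(iE))$. Via the triangle for $j^*j_*\Oc_E$, it reduces to $H^\bullet(E,\Oc_E(-(i-j)))$ and $H^\bullet(E,\Oc_E(-(i-j+1)))$, with $1\le i-j<i-j+1\le n-2$. This is exactly the range where $H^\bullet(Q^{n-1},\Oc(-m))=0$, so your conclusion stands. The paper carries out precisely this computation explicitly rather than citing Kuznetsov.
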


\begin{proof}
    Let us illustrate the case $n=5$, since the fourfold and threefold cases can be proved identically. Call $\wt\Grass$ the blowup of $\Grass$ in the node $p$.\\
    Let us first prove that $\langle \Oc_{\wt X_5}, \phi^*\Uc^\vee, \dots, \Oc_{\wt X_5}(2H), \phi^*\Uc^\vee(2H)\rangle$ is an exceptional collection. This is equivalent to showing that $\Oc_{\wt X_5}$ has a one dimensional space of sections and no higher cohomology, and that the following bundles are acyclic on $\wt X_5$ for $1\leq m\leq 2$:
    \begin{equation*}
        \Oc(-mH) \hspace{20pt} \Uc^\vee(-mH) \hspace{20pt} \Uc^\vee(-(m+1)H) \hspace{20pt} \Sym^2\Uc^\vee(-(m+1)H)
    \end{equation*}
    To this purpose, let us begin by recalling that, as divisor classes in $\wt\Grass$, one has $\wt X_5 = 2H-2E = 2h$. Hence, there is the Koszul sequence
    
    \begin{equation}\label{eq:koszul_blowup_divisor}
        0\arw \Oc_{\wt\Grass}(-2h)\arw \Oc_{\wt\Grass} \arw \Oc_{\wt X_5}\arw 0.
    \end{equation}

    Recall also the Koszul sequence of the exceptional divisor $E = H-h$:

    \begin{equation}\label{eq:koszul_exceptional_divisor}
        0\arw \Oc_{\wt\Grass}(-H+h)\arw\Oc_{\wt\Grass}\arw \Oc_{\PP^5}\arw 0.
    \end{equation}

    To compute the cohomology of $\Oc_{\wt X_5}$ we first resolve it with Equation \ref{eq:koszul_blowup_divisor} and note that the middle term has $h^0 = 1$ and no further cohomology. The first term is $\Oc_{\wt\Grass}(-2h) = \Oc_{\wt\Grass}(2E-2H)$, and can therefore be resolved by
    \begin{equation*}
        0\arw \Oc_{\wt\Grass}(E-2H)\arw\Oc_{\wt\Grass}(2E-2H)\arw \Oc_{\PP^5}(-2)\arw 0.
    \end{equation*}
    The last term has no cohomology, while the first one is resolved by
    \begin{equation*}
        0\arw \Oc_{\wt\Grass}(-2H)\arw\Oc_{\wt\Grass}(E-2H)\arw \Oc_{\PP^5}(-1)\arw 0,
    \end{equation*}
    and therefore has vanishing cohomology as well.\\
    The cohomology of $\Oc_{\wt X_5}(-H)$ can be computed by tensoring Equation \ref{eq:koszul_blowup_divisor} by $\Oc(-H)$: the middle term is clearly acyclic, and the first one is isomorphic to $\Oc_{\wt\Grass}(-3H+2E)$. The latter, in light of Equation \ref{eq:koszul_exceptional_divisor}, sits in a short exact sequence:
    \begin{equation*}
        0\arw \Oc_{\wt\Grass}(-3H+E)\arw \Oc_{\wt\Grass}(-3H+2E)\arw \Oc_{\PP^5}(-2)\arw 0.
    \end{equation*}
    The last term has no cohomology and the first one, again by twisting Equation \ref{eq:koszul_exceptional_divisor}, is resolved by $\Oc(-3H)$ and $\Oc_{\PP^5}(-1)$, both acyclic. The cohomology of $\Oc_{\wt X_5}(-2H)$ can be computed in the same way, by iterating this approach. To prove that the remaining bundles are acyclic, one adopts essentially the same technique with the additional caveat that $\Uc^\vee|_{\PP^5}\simeq\Oc^{\oplus 2}$ and that $\Sym^2\Uc^\vee|_{\PP^5}\simeq\Oc^{\oplus 3}$. Additionally, one needs to show that $H^\bullet(\Gr, \Uc^\vee(-kH)) = H^\bullet(\Gr, \Sym^2\Uc^\vee(-kH)) = 0$ for $1\leq 5$, and this can be checked by means of the Borel--Weil--Bott theorem: see \cite{weyman} for a thorough introduction on the topic, and \cite{python_script} for a simple online calculator.\\
    \\
    The next step is showing that $\langle \Oc_E(3E), \Oc_E(2E), \Oc_E(E)\rangle$ is semiorthogonal. Call $ j: E\arw \wt X_5$ the embedding of the exceptional quadric. One has:
    \begin{equation*}
        \begin{split}
            \Hom_{\wt X_5}^\bullet ( j_*\Oc_E,  j_*\Oc(mE)) & \simeq \Hom_{E}^\bullet(  j^* j_*\Oc_E, \Oc_E(mE)) \\
            &\simeq \Hom_{E}^\bullet(  j^* j_*\Oc_E, \Oc_E(-mh)).
        \end{split}
    \end{equation*}
    Then, by \cite[Corollary 11.4]{huybrechts_FM_transform}, there is a distinguished triangle:
    \begin{equation*}
        \Oc_E(1)[1]\arw  j^* j_*\Oc_E\arw\Oc_E \arw \Oc_E(1)[2]
    \end{equation*}
    and we conclude by applying the long exact sequence of $\Hom_E^\bullet(-, \Oc_E(-mh))$.\\
    \\
    The remainder of the proof consists of showing that for any element of the exceptional collection above (call it $\Ec$) on $X_5$, one has $R^\bullet \Hom_{\wt X_5}(\Ec, \Oc_E(-kh)) = 0$ for $1\leq k\leq 3$. One has:
    \begin{equation*}
        \begin{split}
            R^\bullet \Hom_{\wt X_5}(\Ec, \Oc_E(-kh)) & \simeq H^\bullet(\wt X_5, \Ec^\vee\otimes\Oc_E(-kh)) \\
            &\simeq H^\bullet(E, \Oc_E^{\oplus\rk\Ec}(-kh)) = 0.
        \end{split}
    \end{equation*}
\end{proof}

By Proposition \ref{prop:SOD_of_the_resolved_GM}, in the threefold case we have:
\begin{equation}\label{first_collection_resolved_GM3}
        \dbcoh(\wt X_3) = \langle j_*\Oc(-h), \wt{\Kc u}(X_3), \Oc, \Uc^\vee \rangle,
\end{equation}
while in the fourfold case we find:
\begin{equation}\label{first_collection_resolved_GM4}
        \dbcoh(\wt X_4) = \langle j_*\Oc(-2h), j_*\Oc(-h), \wt{\Kc u}(X_4), \Oc, \Uc^\vee, \Oc(H), \Uc^\vee(H)  \rangle,
\end{equation}
and in dimension five:
\begin{equation}\label{first_collection_resolved_GM5}
        \dbcoh(\wt X_5) = \langle j_*\Oc(-3h), j_*\Oc(-2h), j_*\Oc(-h), \wt{\Kc u}(X_5), \Oc, \Uc^\vee, \Oc(H), \Uc^\vee(H), \Oc(2H), \Uc^\vee(2H)  \rangle.
\end{equation}

\begin{lemma}\label{lem:right_mutation_through_pf_structure}
    Let $\Fc$ be a vector bundle. Then one has:
    \begin{equation*}
        \RR_{j_*\Oc}\phi^*\Fc \simeq \phi^*\Fc(-E).
    \end{equation*}
\end{lemma}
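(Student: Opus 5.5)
The plan is to compute the mutation directly from the formula for mutations through an exceptional object recalled at the beginning of this section. I take $A=\phi^*\Fc$ and $B=j_*\Oc_E$, so that
\begin{equation*}
    \RR_{j_*\Oc}\phi^*\Fc=\Cone\left(\phi^*\Fc\arw R^\bullet\Hom(\phi^*\Fc, j_*\Oc_E)^\vee\otimes j_*\Oc_E\right)[-1].
\end{equation*}
There are three things to do: check that $j_*\Oc_E$ is exceptional, compute the $\Hom$-space, and identify the evaluation map.

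First, exceptionality of $j_*\Oc_E$ uses the triangle $\Oc_E(1)[1]\arw j^*j_*\Oc_E\arw\Oc_E$, already used in the proof of Proposition \ref{prop:SOD_of_the_resolved_GM}. Applying $\Hom_E^\bullet(-,\Oc_E)$ gives $R^\bullet\Hom(j_*\Oc_E,j_*\Oc_E)$ as an extension of $H^\bullet(E,\Oc_E)=\CC$ by a shift of $H^\bullet(E,\Oc_E(-1))$. The latter vanishes because $E$ is a smooth quadric of positive dimension, so $j_*\Oc_E$ is exceptional.

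Second, by adjunction,
\begin{equation*}
    R^\bullet\Hom_{\wt X_n}(\phi^*\Fc, j_*\Oc_E)\simeq H^\bullet(E, j^*\phi^*\Fc^\vee).
\end{equation*}
Since $\phi\circ j$ factors through the point $p$, the restriction $j^*\phi^*\Fc^\vee$ is the trivial bundle $\Fc^\vee_p\otimes\Oc_E$. Hence this $\Hom$-space is $\Fc_p^\vee$, concentrated in degree zero, because $E$ is connected and $\Oc_E$ has no higher cohomology.

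Third, I identify the map. Using the second step, the map becomes $\phi^*\Fc\arw\Fc_p\otimes j_*\Oc_E\simeq j_*j^*\phi^*\Fc$, and I expect it to be the adjunction (restriction) morphism. It is therefore surjective, and its kernel is $\phi^*\Fc\otimes\Oc(-E)$, by tensoring the ideal sheaf sequence $0\arw\Oc(-E)\arw\Oc\arw j_*\Oc_E\arw 0$ with the locally free sheaf $\phi^*\Fc$. Since the map is a surjection of sheaves, its cone shifted by $[-1]$ is quasi-isomorphic to the kernel, which gives $\RR_{j_*\Oc}\phi^*\Fc\simeq\phi^*\Fc(-E)$.

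The only point needing care is this third step: the canonical evaluation map must be checked to coincide with the restriction map rather than some other morphism. The plan is to use that, under the adjunction isomorphism, the coevaluation $\phi^*\Fc\arw\Hom(\phi^*\Fc,j_*\Oc_E)^\vee\otimes j_*\Oc_E$ corresponds to the identity of $\Fc_p$. That identity map is exactly the unit $\phi^*\Fc\arw j_*j^*\phi^*\Fc$.
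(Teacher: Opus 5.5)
Your proposal is correct and follows the paper's route. Both compute $R^\bullet\Hom(\phi^*\Fc, j_*\Oc)\simeq\Fc_p^\vee$ in degree zero (the paper pushes forward to $X_n$ with the projection formula, you use $j^*\dashv j_*$ and the triviality of $j^*\phi^*\Fc$), and both identify the mutation with the kernel $\phi^*\Fc(-E)$ of the restriction map $\phi^*\Fc\arw j_*j^*\phi^*\Fc$. Your identification of the coevaluation with the adjunction unit makes explicit a step the paper leaves implicit. One small correction: the vanishing of $H^\bullet(E,\Oc_E(-1))$ in your exceptionality check needs $\dim E\geq 2$, not just positive dimension, since it fails for a conic; this holds here because $\dim E=n-1\geq 2$.
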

\begin{proof}
    Call $r$ the rank of $\Fc$. Then, we have:
    \begin{equation*}
        \begin{split}
            \Hom_{\wt X}^\bullet(\phi^*\Fc, j_*\Oc) &\simeq H^\bullet(X, \Fc^\vee\otimes\phi_*j_*\Oc) \\
            & \simeq H^\bullet(p, \Fc^\vee|_p) \\
            & \simeq \CC^r.
        \end{split}
    \end{equation*}
    Therefore, the mutation, up to shift, is isomorphic to the cone of a morphism $\phi^*\Fc\arw j_*\Oc^{\oplus r}$. However, one has a short exact sequence:
    \begin{equation}\label{eq:koszul_exc_divisor_times-F}
        0\arw \phi^*\Fc(-E)\arw\phi^*\Fc \arw \psi^*\Fc \otimes j_*\Oc \arw 0
    \end{equation}
    and this concludes the proof once we observe that $\psi^*\Fc \otimes j_*\Oc \simeq j_*\Oc^{\oplus r}$.
\end{proof}
\begin{lemma}\label{lem:left_mutation_through_O_of_pf_structure}
    One has $\LL_\Oc j_*\Oc \simeq \Oc(-E)$, up to a shift.
\end{lemma}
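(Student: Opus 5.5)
The plan is to compute $R^\bullet\Hom_{\wt X}(\Oc, j_*\Oc_E)$, identify the canonical evaluation map, and recognize its cone using the Koszul sequence of the exceptional divisor $E$, in the same way as in the proof of Lemma \ref{lem:right_mutation_through_pf_structure}. By definition
\begin{equation*}
    \LL_\Oc j_*\Oc_E = \Cone\left(\Oc\otimes R^\bullet\Hom(\Oc, j_*\Oc_E)\arw j_*\Oc_E\right),
\end{equation*}
and $R^\bullet\Hom(\Oc, j_*\Oc_E)\simeq H^\bullet(\wt X, j_*\Oc_E)\simeq H^\bullet(E, \Oc_E)$.

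First, I will check that $E$ is a (smooth) quadric of dimension $n-1\geq 2$, hence connected. Being a quadric, it is also rational and arithmetically Cohen--Macaulay, so $H^0(E,\Oc_E)=\CC$ and $H^i(E,\Oc_E)=0$ for $i>0$. The Hom space is therefore one-dimensional and concentrated in degree zero. The evaluation map is then the restriction morphism $\Oc_{\wt X}\arw j_*\Oc_E$, which is surjective.

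By the remark on mutations recalled above, in the surjective case the mutation is, up to shift, the kernel of this morphism. The Koszul sequence of the Cartier divisor $E$,
\begin{equation*}
    0\arw \Oc_{\wt X}(-E)\arw\Oc_{\wt X}\arw j_*\Oc_E\arw 0,
\end{equation*}
which is Equation \ref{eq:koszul_exc_divisor_times-F} with $\Fc=\Oc$, identifies this kernel with $\Oc(-E)$. Concretely, $\LL_\Oc j_*\Oc\simeq \Oc(-E)[1]$.

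The only real point to check is that the evaluation map is exactly the restriction map in the Koszul sequence, rather than some other map. This holds because $\Hom(\Oc, j_*\Oc_E)$ is spanned by the unit section, and evaluation at the unit is restriction. Everything else is routine, so I expect no serious obstacle.
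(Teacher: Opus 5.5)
Your proof is correct and follows the paper's argument. The paper likewise writes $\LL_\Oc j_*\Oc\simeq\Cone(\Oc\arw j_*\Oc)$ and reads off $\Oc(-E)$ from Equation \ref{eq:koszul_exc_divisor_times-F} with $\Fc=\Oc$. You additionally justify a step the paper leaves implicit: $R^\bullet\Hom(\Oc, j_*\Oc_E)\simeq H^\bullet(E,\Oc_E)\simeq\CC$ in degree zero because $E$ is a smooth quadric, which also pins down the shift, $\Oc(-E)[1]$.
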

%{\color{purple}
\begin{proof}
    One has $\LL_\Oc j_*\Oc \simeq\operatorname{Cone}(\Oc\arw j_*\Oc)$. Then, the proof follows by the short exact sequence obtained by substituting $\Fc\simeq \Oc$ in Equation \ref{eq:koszul_exc_divisor_times-F}.
\end{proof}
%}
\begin{lemma}\label{lem:left_mutation_through_O_tautological}
    One has $\LL_\Oc \Uc^\vee(-E) \simeq \Oc(-H+2E)$.
\end{lemma}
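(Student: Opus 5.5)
The plan is to compute the left mutation directly from its definition,
\[
\LL_\Oc\,\Uc^\vee(-E)\simeq\operatorname{Cone}\bigl(\Oc\otimes R^\bullet\Hom(\Oc,\Uc^\vee(-E))\arw \Uc^\vee(-E)\bigr),
\]
where $\Uc^\vee$ stands for $\phi^*\Uc^\vee$. The work splits into two steps: compute the graded space $H^\bullet(\wt X_n,\phi^*\Uc^\vee(-E))$, then identify the evaluation map.

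\textbf{Step 1: cohomology.} I would twist the sequence \ref{eq:koszul_exc_divisor_times-F} with $\Fc=\Uc^\vee$, giving $0\arw\phi^*\Uc^\vee(-E)\arw\phi^*\Uc^\vee\arw j_*\Oc^{\oplus2}\arw0$. Since $\phi_*\Oc_{\wt X_n}=\Oc_{X_n}$, the cohomology of $\phi^*\Uc^\vee$ equals that of $\Uc^\vee$ on $X_n$. This is $\scrV^\vee$ in degree zero and nothing else, which follows from the Koszul resolution of $X_n$ inside $\Grass$ and Borel--Weil--Bott, exactly as in the proof of Proposition \ref{prop:SOD_of_the_resolved_GM}. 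The map $H^0(\Uc^\vee)\arw \Uc^\vee|_p\simeq\scrP^\vee$ is restriction of linear forms to $\scrP$, hence surjective. Therefore $H^\bullet(\phi^*\Uc^\vee(-E))$ is concentrated in degree zero and equals $\scrW^\vee=(\scrV/\scrP)^\vee$, which is $3$-dimensional: these are the linear forms vanishing on $\scrP$.

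\textbf{Step 2: the evaluation map.} The mutation is therefore the cone of $\scrW^\vee\otimes\Oc\arw\phi^*\Uc^\vee(-E)$, a map from a rank $3$ trivial bundle to a rank $2$ bundle. If it is surjective, its kernel $\Lc$ is a vector bundle of rank one, so a line bundle. Its class is then
\[
c_1(\Lc)=-c_1(\Uc^\vee(-E))=-(H-2E),
\]
which gives $\LL_\Oc\Uc^\vee(-E)\simeq\Oc(-H+2E)[1]$, as claimed up to shift.

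\textbf{Step 3: surjectivity, the main obstacle.} Away from $E$ the map sends $w$ to $w|_U$, and it is surjective exactly where $U\cap\scrP=0$. Along $E$, the twist by $-E$ accounts for the first-order vanishing of these sections at $p$. I would check surjectivity along $E$ in local coordinates $x_{13},\dots,x_{25}$ near $p$: the leading terms $x_{ij}$ with $i\in\{1,2\}$ should give a rank $2$ map on every tangent direction that lies in $E$.

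The delicate part is the locus where $U\cap\scrP\neq0$ away from $p$, that is the lines through $p$ contained in $X_n$, whose strict transforms form $S_X$. There $w|_U$ kills $U\cap\scrP$, so the naive rank seems to drop to one. I would first re-examine this locally on $\wt X_n$ near $S_X$, and treat it as the crux of the argument.

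If the rank really drops along $S_X$, the kernel is still reflexive of rank one, hence equal to $\Oc(-H+2E)$ by the $c_1$ computation, since $\wt X_n$ is smooth and $S_X$ has codimension two. The cokernel would then be a sheaf supported on $S_X$. In that case I would either show this cokernel vanishes by a finer local computation, or interpret the statement inside the category where the mutation is actually used: after projecting away from the blocks supported near $F$, the torsion term is invisible, in line with the flop picture of Diagram \ref{eq_keqdiagram}.
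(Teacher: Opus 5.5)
Your Steps 1 and 2 follow the paper's proof: the same sequence gives $H^\bullet(\Uc^\vee(-E))\simeq\scrW^\vee$ in degree $0$, and you use the same evaluation map and the same $c_1$. The gap is where you put it, in Step 3, and it cannot be closed. Your suspicion is correct: the evaluation map is \emph{not} surjective, so the lemma as literally stated is false.

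At $x\in S_X\setminus E$ the bundle $\Oc(-E)$ is trivial near $x$. The fibre of the map there is $\scrW^\vee=\scrP^\perp\arw U_x^\vee$, $w\mapsto w|_{U_x}$. Its image is the annihilator of $U_x\cap\scrP\neq 0$, so it has rank one. By Nakayama the cokernel is nonzero at $x$, and no finer local computation (your option (i)) can remove it. At a point of $E$, that is a tangent direction $\varphi\in\Hom(\scrP,\scrW)$, the map is $w\mapsto w\circ\varphi$. This is onto iff $\operatorname{rk}\varphi=2$, so the degeneracy locus is exactly $S_X$.

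There is also a purely numerical obstruction. A surjection $\Oc^{\oplus 3}\twoheadrightarrow\Uc^\vee(-E)$ has a line bundle as kernel, which forces $c_2=c_1^2$. But $c_1^2-c_2=c_2(\Qc)-3HE+3E^2$, and against $H^{n-2}$ this has degree $2\int_{\Grass}H^4c_2(\Qc)=6$, since the $E$-terms die.

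What Steps 1 and 2, together with your reflexivity remark, actually prove is this: $\LL_\Oc\Uc^\vee(-E)$ has $\mathcal{H}^{-1}\simeq\Oc(-H+2E)$ and $\mathcal{H}^0$ a nonzero rank-one sheaf on $S_X$. So it is not a shifted line bundle. The paper's proof has the same hole. Its snake-lemma diagram only identifies the cokernel with $\operatorname{coker}(\Qc^\vee\arw\Oc(-E)^{\oplus 2})$. Fibrewise, $\Qc^\vee\arw\scrP^\vee\otimes\Oc$ is the restriction $U^\perp\arw\scrP^\vee$, which drops rank on the same locus. Surjectivity is asserted there, not proved.

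Your option (ii) is the right instinct, but it is a different statement and, as phrased, not an argument. Nor is the torsion ``invisible''; it is exactly what the flop requires. Put $\zeta=H-2E$ and test against the two rulings of $F$, using $\zeta\cdot L=-1$ on a flopping line $L$. This gives $p^*\zeta=q^*\zeta+F$ on $\Xc$, hence $q_*p^*\Oc(-\zeta)\simeq\Ic_{S_R}(-\zeta)$. So even the literal lemma would not deliver the line bundle $\Oc(-\zeta)$ on $R_n$ that the comparison in the proof of Theorem \ref{thm:kuznetsov_component_clifford_component} uses.

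The true statement lives after the flop. Off $S_X\cup E$, the map $\psi\circ\mu$ sends $U$ to $(U+\scrP)/\scrP\in\Gr(2,\scrW)=\PP(\wedge^2\scrW)$. There your evaluation map is the pullback of the Euler quotient $\scrW^\vee\otimes\Oc\twoheadrightarrow T_{\PP^2}(-\zeta)$. On $\wt X_n\setminus S_X\simeq R_n\setminus S_R$ both kernels are line subbundles agreeing generically, so the identification holds on all of that open set. One checks that $q_*p^*\Uc^\vee(-E)$ is locally free, since it restricts to $\Oc_L(-1)\oplus\Oc_L$ on flopping lines. Hence it is isomorphic to $\psi^*T_{\PP^2}(-\zeta)$, and therefore
\[
q_*p^*\,\LL_\Oc\Uc^\vee(-E)\simeq\Cone\bigl(\scrW^\vee\otimes\Oc\twoheadrightarrow\psi^*T_{\PP^2}(-\zeta)\bigr)\simeq\Oc_{R_n}(-\zeta)[1].
\]
This statement after the flop, not an isomorphism on $\wt X_n$, is the version of the lemma you should aim to prove.
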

\begin{proof}
    First, $\Hom_{\wt X}(\Oc, \Uc^\vee(-E)) \simeq \CC^3[0]$ due to the sequence
    \begin{equation*}
        0\arw \Uc^\vee(-E)\arw\Uc^\vee\arw \Uc^\vee \otimes j_*\Oc \arw 0,
    \end{equation*}
   again by Equation \ref{eq:koszul_exc_divisor_times-F}. Indeed, since $\pi:\widetilde{X_n}\rightarrow X_n$ is the resolution of singularities, which is a birational morphism, we have $\mathrm{Hom}_{\widetilde{X}_n}(\oh,\mathcal{U}^{\vee}(-E))\cong\mathrm{Hom}_{X_n} (\oh,\mathcal{U}\otimes I_p)$. Consider the standard short exact sequence 
    $$0\rightarrow I_p\rightarrow\oh_{X_n}\rightarrow\oh_p\rightarrow 0.$$
By \cite[Theorem 1.1]{bayer2024mukai}, tensoring with $\widetilde{\Uc}^{\vee}$, we get the following sequence on $X$:
$$0\rightarrow\mathcal{U}^{\vee}\otimes I_p\rightarrow\mathcal{U}^{\vee}\rightarrow\mathcal{U}^{\vee}\otimes\oh_p\rightarrow 0.$$
Apply $\mathrm{Hom}_{X_n}(\oh,-)$ to the short exact sequence, we have a long exact sequence 
$$0\rightarrow\mathrm{Hom}_{X_n}(\oh,\mathcal{U}^{\vee}\otimes I_p)\rightarrow\mathrm{Hom}_{X_n}(\oh,\mathcal{U}^{\vee})\rightarrow\mathrm{Hom}_{X_n}(\oh,\mathcal{U}^{\vee}|_p)\rightarrow\mathrm{Ext}^1_{X_n}(\oh,\mathcal{U}^{\vee}\otimes I_p)\rightarrow 0,$$
since $\Ext^{>0}_{X_n}(\oh,\mathcal{U}^{\vee})=0$ by \cite[Theorem 1.1]{bayer2024mukai}. Note that $\mathrm{Hom}_{X_n}(\oh,\mathcal{U}^{\vee})=\mathbb{C}^5$ and $\mathcal{U}^{\vee}|_p\cong\mathbb{C}^2$, then $\mathrm{Hom}_{X_n}(\oh,\mathcal{U}^{\vee}\otimes I_p)\geq 3$. We claim that $\mathrm{Hom}_{X_n}(\oh,\mathcal{U}^{\vee}\otimes I_p)\leq 3$, otherwise, the point $p$ would be contained in a zero locus of four independent section of the vector bundle $\mathcal{U}^{\vee}$, which is an empty set. Thus $\mathrm{Ext}^{\geq 1}_{X_n}(\oh,\mathcal{U}^{\vee}\otimes I_p)=0$, and the desired result is obtained.

  %  \textcolor{purple}{(add the proof that the cohomology of $\Uc^\vee(-E)$ is concentrated in degree zero, which is not automatically obvious by the sequence...)}.\\
    Hence, we have:
    \begin{equation*}
        \LL_\Oc \Uc^\vee(-E) = \operatorname{Cone}\left(\Oc^{\oplus 3}\arw \Uc^\vee(-E)\right).
    \end{equation*}
    By applying the snake lemma to the following diagram we can deduce that the map is surjective, and therefore the kernel is a line bundle.
    \begin{equation*}
        \begin{tikzcd}
            \Oc(-E)^{\oplus 2}\ar[hook]{r} & \Oc^{\oplus 2} \ar[two heads]{r} & \Uc^\vee|_E \\
            \Qc^\vee \ar[hook]{r} & \Oc^{\oplus 5} \ar[two heads]{u} \ar[two heads]{r} & \Uc^\vee \ar[two heads]{u} \\
            & \Oc^{\oplus 3} \ar[hook]{u} \ar{r} & \Uc^\vee(-E) \ar[hook]{u}
        \end{tikzcd}
    \end{equation*}
    The claim, hence, follows by the fact that $c_1 (\Uc(E)) = -H + 2E$.
\end{proof}
\begin{proposition}\label{prop:mutations_inside_Xtilde}
    Let $X_3$ be a nodal GM threefold and $\wt X_3$ the resolution of the blowup in the node. Then one has:
    \begin{equation*}
        \dbcoh(\wt X_3) = \langle \RR_{\Oc(-E)}\wt{\Kc u}(X_3), \Oc(-2h+H), \Oc, \Oc(2h-H) \rangle
    \end{equation*}
    Let now $X_4$ be a nodal GM fourfold. Then:
    \begin{equation*}
        \dbcoh(\wt X_4) = \langle \RR_{\Oc(-E)}\wt{\Kc u}(X_4), \Oc(-2h+H), \Oc, \Oc(2h-H), \Oc(-h+H), \Oc(h), \Oc(3h-H)\rangle
    \end{equation*}
    Finally, if $X_5$ is a nodal GM fivefold, we obtain:
    \begin{equation*}
        \begin{split}
            \dbcoh(\wt X_5) = & \langle \RR_{\Oc(-E)}\wt{\Kc u}(X_5), \Oc(-2h+H), \Oc, \Oc(2h-H), \\
            & \hspace{10pt} \Oc(-h+H), \Oc(h), \Oc(3h-H), \Oc(H), \Oc(2h), \Oc(4h-H)\rangle.
        \end{split}
    \end{equation*}
\end{proposition}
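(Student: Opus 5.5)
We start from the decompositions \ref{first_collection_resolved_GM3}, \ref{first_collection_resolved_GM4}, \ref{first_collection_resolved_GM5} and treat the case $n=3$ in detail. The cases $n=4,5$ follow by running the same block of mutations on each pair $\Oc(kH),\Uc^\vee(kH)$ and then twisting. Throughout we use $E=H-h$ from Lemma \ref{lem:E_and_canonical_bundles}, so that $\Oc(-E)=\Oc(h-H)$ and $\Oc(-H+2E)=\Oc(H-2h)$. We also use $\omega_{\wt X_n}=\Oc(-(n-2)h)$. Since $j_*\Oc(-kh)=j_*\Oc(kE)$, the objects $\Oc_E(kE)$ are the $j_*\Oc(-kh)$ of the collections.

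\textbf{Step 1.} First move the leftmost object $j_*\Oc(-h)$ to the far right using the Serre functor. Twisting by $\omega^\vee=\Oc((n-2)h)$ sends $j_*\Oc(-h)$ to $j_*\Oc((n-3)h)$. For $n=3$ this gives $\langle \wt{\Kc u}, \Oc, \Uc^\vee, j_*\Oc\rangle$. Next, mutate $j_*\Oc$ to the left through $\Uc^\vee$. By the dual of Lemma \ref{lem:right_mutation_through_pf_structure}, with $\Hom^\bullet(\Uc^\vee,j_*\Oc)=\CC^2[0]$ and the Koszul sequence \ref{eq:koszul_exc_divisor_times-F}, the pair $(\Uc^\vee, j_*\Oc)$ becomes $(\Uc^\vee(-E), \Uc^\vee)$ after shifting.

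We prefer a cleaner route: mutate $j_*\Oc$ left past both $\Uc^\vee$ and $\Oc$. By Lemma \ref{lem:left_mutation_through_O_of_pf_structure} the image is $\Oc(-E)$, provided $j_*\Oc$ is first placed adjacent to $\Oc$. Concretely, we take $\langle \wt{\Kc u},\Oc,\Uc^\vee,j_*\Oc\rangle$ and right-mutate $\Uc^\vee$ through $j_*\Oc$. Lemma \ref{lem:right_mutation_through_pf_structure} turns this into $\langle \wt{\Kc u},\Oc,j_*\Oc,\Uc^\vee(-E)\rangle$. Then left-mutating $j_*\Oc$ through $\Oc$ gives $\langle \wt{\Kc u},\Oc(-E),\Oc,\Uc^\vee(-E)\rangle$.

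\textbf{Step 2.} Right-mutate $\Oc$ through $\Uc^\vee(-E)$, or equivalently left-mutate $\Uc^\vee(-E)$ through $\Oc$. Lemma \ref{lem:left_mutation_through_O_tautological} gives $\Oc(-H+2E)=\Oc(H-2h)$, so we obtain $\langle \wt{\Kc u},\Oc(-E),\Oc(H-2h),\Oc\rangle$. Now move $\wt{\Kc u}$ to the right past $\Oc(-E)$, which replaces it by $\RR_{\Oc(-E)}\wt{\Kc u}$ and yields $\langle \Oc(-E),\RR_{\Oc(-E)}\wt{\Kc u},\Oc(H-2h),\Oc\rangle$. Then send $\Oc(-E)$ to the far right with the Serre functor, $\Oc(-E)\otimes\omega^\vee=\Oc(h-H+h)=\Oc(2h-H)$. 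The result is $\langle \RR_{\Oc(-E)}\wt{\Kc u},\Oc(-2h+H),\Oc,\Oc(2h-H)\rangle$, which is the claim for $n=3$.

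\textbf{Step 3 ($n=4,5$).} We repeat the mutations of Step 1 on each further pair $\Oc(kH),\Uc^\vee(kH)$, using $j_*\Oc(-(k+1)h)$. To make the relevant $j_*$-object adjacent, we first apply the Serre functor, and at a later stage we mutate blocks past each other. Each pair becomes $\Oc(kH-(k+?)h)$-type line bundles, and the targets $\Oc(-h+H),\Oc(h),\Oc(3h-H)$ are exactly the twists of the $n=3$ triple by $\Oc(h-H+H)=\Oc(h)$, with further twists by $\Oc(2h)$. So we expect a block structure: each triple $\langle\Oc(-2h+H),\Oc,\Oc(2h-H)\rangle$ twisted by $\Oc(kh)$. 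We will verify this by showing that, after the mutations, the $k$-th block is the $n=3$ block tensored by $\Oc(kh)$, using that $j_*\Oc(mE)$ twists like $j_*\Oc(-mh)$.

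\textbf{Main obstacle.} The main obstacle is the bookkeeping in Step 3. We must make sure the Serre-functor moves and the ordering of the $j_*$-objects produce exactly these twists, rather than twists differing by multiples of $E$. We must also check that the needed $\Hom$ computations (the analogues of Lemmas \ref{lem:right_mutation_through_pf_structure}--\ref{lem:left_mutation_through_O_tautological} twisted by $\Oc(kH)$) still give concentrated cohomology. Our first approach is to twist the whole decomposition by $\Oc(-kh)$, which reduces each block back to the $n=3$ computation.
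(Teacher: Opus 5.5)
For $n=3$, your Steps 1--2 follow the paper's argument. You use the same inverse Serre move of $j_*\Oc(-h)$ and Lemmas \ref{lem:right_mutation_through_pf_structure} and \ref{lem:left_mutation_through_O_of_pf_structure} to reach $\langle\wt{\Kc u}(X_3),\Oc(-E),\Oc,\Uc^\vee(-E)\rangle$, then Lemma \ref{lem:left_mutation_through_O_tautological} and the Serre move of $\Oc(-E)$. You do these last two in swapped order, which is harmless. Two slips should be removed:
\begin{itemize}
\item The abandoned first attempt in Step 1 is wrong: $\LL_{\Uc^\vee}j_*\Oc$ is the cone of $\Uc^\vee\otimes\CC^2\to j_*\Oc$, which has rank $4$ up to shift, so it is not $\Uc^\vee(-E)$.
\item In Step 2, right-mutating $\Oc$ through $\Uc^\vee(-E)$ is not equivalent to left-mutating $\Uc^\vee(-E)$ through $\Oc$. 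Only the latter is what you use and need.
\end{itemize}

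The genuine gap is Step 3. For $n=4,5$ it states an expectation rather than an argument, and both concrete suggestions in it fail. First, pairing $\Oc(kH),\Uc^\vee(kH)$ with $j_*\Oc(-(k+1)h)$ is backwards. After the inverse Serre functor that object is $j_*\Oc((n-3-k)h)$, so for $n=4$ you would be left-mutating $j_*\Oc(h)$ through $\Oc$. Since $\Oc_E(h)=\Oc_E(1)$, this gives $\Cone(\Oc\otimes H^\bullet(E,\Oc_E(1))\to j_*\Oc_E(1))$, which is not a line bundle. Second, twisting the whole decomposition by $\Oc(-kh)$ sends $\Oc(kH),\Uc^\vee(kH)$ to $\Oc(kE),\Uc^\vee(kE)$, not to $\Oc,\Uc^\vee$, so it does not reduce to the $n=3$ computation.

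The missing mechanism is that the $j_*$-objects themselves convert $H$-twists into $h$-twists as they travel left. After all inverse Serre moves you have
$\langle\wt{\Kc u},\Oc,\Uc^\vee,\dots,\Oc((n-3)H),\Uc^\vee((n-3)H),j_*\Oc,j_*\Oc(h),\dots,j_*\Oc((n-3)h)\rangle$. Then:
\begin{itemize}
\item Move $j_*\Oc$ left past every bundle except the first $\Oc$. These bundles are pulled back from $X_n$, so Lemma \ref{lem:right_mutation_through_pf_structure} twists each by $\Oc(-E)$. Lemma \ref{lem:left_mutation_through_O_of_pf_structure} then turns $j_*\Oc$ into $\Oc(-E)$ in front of $\Oc$.
\item The second pair is now $\Oc(H-E)=\Oc(h)$, $\Uc^\vee(h)$, and every bundle to its right is an $\Oc(h)$-twist of a pullback. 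So the $\Oc(h)$-twisted lemmas move $j_*\Oc(h)$ left and turn it into $\Oc(h-E)$ in front of $\Oc(h)$.
\item Iterating reaches
$\langle\wt{\Kc u},\Oc(-E),\Oc,\Uc^\vee(-E),\Oc(h-E),\Oc(h),\Uc^\vee(h-E),\dots,\Oc((n-3)h-E),\Oc((n-3)h),\Uc^\vee((n-3)h-E)\rangle$.
\item Mutate $\wt{\Kc u}$ past $\Oc(-E)$, and send $\Oc(-E)$ to the end as $\Oc((n-2)h-E)$.
\item Apply Lemma \ref{lem:left_mutation_through_O_tautological}, twisted by $\Oc(kh)$, to each $\Uc^\vee(kh-E)$.
\end{itemize}
Using $E=H-h$, this produces the triples $\Oc(kh)\otimes\langle\Oc(H-2h),\Oc,\Oc(2h-H)\rangle$ of the statement. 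This is the sequence the paper carries out explicitly for $n=4$ and iterates for $n=5$.
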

\begin{proof}
    Let us start with the \textbf{threefold case}. In the SOD \ref{first_collection_resolved_GM3} we apply the inverse Serre functor to $j_*\Oc(-h)$, obtaining:
    \begin{equation*}
        \dbcoh(\wt X_3) = \langle \wt{\Kc u}(X_3), \Oc, \Uc^\vee, j_*\Oc \rangle.
    \end{equation*}
    Then, by Lemma \ref{lem:right_mutation_through_pf_structure} and Lemma \ref{lem:left_mutation_through_O_of_pf_structure}, we get:
    \begin{equation*}
        \dbcoh(\wt X_3) = \langle \wt{\Kc u}(X_3), \Oc(-E), \Oc, \Uc^\vee(-E)\rangle.
    \end{equation*}
    Then, we move $\wt \Kc u(X_3)$ one step to the right and we apply the inverse Serre functor to $\Oc(-E)$ finding:
    \begin{equation*}
        \dbcoh(\wt X_3) = \langle \RR_{ \Oc(-E)}\wt{\Kc u}(X_3), \Oc, \Uc^\vee(-E), \Oc(-E+h)\rangle.
    \end{equation*}
    Let us move $\Uc^\vee(-E)$ one step to the left. By Lemma \ref{lem:left_mutation_through_O_tautological} and $E = H-h$ we find:
    \begin{equation*}
        \dbcoh(\wt X_3) = \langle \RR_{\Oc(-E)}\wt{\Kc u}(X_3), \Oc(h-2H), \Oc,  \Oc(-2h+H)\rangle.
    \end{equation*}
    Let us now consider the \textbf{fourfold case}, which can be addressed similarly. Again we start by mutating in Equation \ref{first_collection_resolved_GM4} the torsion objects to the end of the SOD finding:
    \begin{equation*}
        \dbcoh(\wt X_4) = \langle \wt{\Kc u}(X_4), \Oc, \Uc^\vee, \Oc(H), \Uc^\vee(H),  j_*\Oc, j_*\Oc(h)  \rangle.
    \end{equation*}
    Now, if we move $j_*\Oc$ four steps to the right by Lemma \ref{lem:right_mutation_through_pf_structure} and then we mutate it through $\Oc$ via Lemma \ref{lem:left_mutation_through_O_of_pf_structure} we find
    \begin{equation*}
        \dbcoh(\wt X_4) = \langle \wt{\Kc u}(X_4), \Oc(-E), \Oc, \Uc^\vee(-E), \Oc(H-E), \Uc^\vee(H-E), j_*\Oc(h)  \rangle.
    \end{equation*}
    Since $H-E = h$, we can mutate $ j_*\Oc(h)$ one step to the left with Lemma \ref{lem:right_mutation_through_pf_structure} and then an additional step to the left with Lemma \ref{lem:left_mutation_through_O_of_pf_structure}:
    \begin{equation*}
        \dbcoh(\wt X_4) = \langle \wt{\Kc u}(X_4), \Oc(-E), \Oc, \Uc^\vee(-E), \Oc(h-E), \Oc(h), \Uc^\vee(h-E) \rangle.
    \end{equation*}
    Now let us mutate the first block one step to the right, and $\Oc(-E)$ to the end via inverse Serre functor:
    \begin{equation*}
        \dbcoh(\wt X_4) = \langle \RR_{\Oc(-E)}\wt{\Kc u}(X_4), \Oc, \Uc^\vee(-E), \Oc(h-E), \Oc(h), \Uc^\vee(h-E), \Oc(h-E) \rangle.
    \end{equation*}
    We conclude by applying Lemma \ref{lem:left_mutation_through_O_tautological} to both $\Uc^\vee(-E)$ and $\Uc^\vee(h-E)$, finding:
    \begin{equation*}
        \dbcoh(\wt X_4) = \langle \RR_{\Oc(-E)}\wt{\Kc u}(X_4), \Oc(-H+2E),\Oc,\Oc(h-E),\Oc(h-H+2E), \Oc(h), \Oc(h-E) \rangle.
    \end{equation*}
    The claim follows by substituting $E = H-h$ in the last SOD.\\
    \\
    It is clear that the sequence of mutations for the fourfold case consist of iterating the threefold approach twice, up to twists by $\Oc(h)$. Unsurprisingly, the fivefold case can be addressed by applying the same set of mutations three times. 
\end{proof}

Let $E$ be a globally generated vector bundle of rank $m+2$ on a smooth projective variety $B$, and $\eta:\PP(E^\vee)\arw B$ the associated projective bundle. Fix a relative hyperplane bundle $\Lc$ by the condition $\eta_*\Lc\simeq E$. Given a general section $\sigma\in H^0 ( \PP(E^\vee), \Lc^{\otimes 2})$, call $R = \VV(\sigma)$ its zero locus, and $\psi: R\arw B$ the associated quadric fibration of relative dimenson $m$. We call $\Cc_m$ the even part of the Clifford algebra of $R$, in the sense of \cite[Section 3]{kuznetsov_quadric_fibrations_intersection_quadrics}. Then, one has the following semiorthogonal decomposition \cite[Theorem 4.2]{kuznetsov_quadric_fibrations_intersection_quadrics}:
\begin{equation}\label{eq:sod_quadroc_fibration_in_general}
    \dbcoh(R) = \langle \dbcoh(B, \Cc_m), \psi^*\dbcoh(B)\otimes\Lc, \dots, \psi^*\dbcoh(B)\otimes\Lc^{\otimes(m)}\rangle
\end{equation}

where $\dbcoh(B, \Cc_m)$ denotes the derived category of coherent sheaves of modules over $\Cc_m$. In the following, for the quadric fibration $R_n$, let us fix the notation $\Bc_n:= \dbcoh(\PP^2, \Cc_n)\otimes\Oc(-h)$ where $\Cc_n$ is the even part of the appropriate Clifford algebra. Then, $R_n$ comes with a semiorthogonal decomposition as follows :

\begin{equation}\label{eq:SOD_quadric_fibration}
    \begin{split}
        \dbcoh(R_3) &= \langle \Bc_3, \psi^*\dbcoh(\PP(\wedge^2\scrW))\rangle \hspace{242pt}\text{(threefold)} \\
        \dbcoh(R_4) &= \langle \Bc_4, \psi^*\dbcoh(\PP(\wedge^2\scrW)), \psi^*\dbcoh(\PP(\wedge^2\scrW))\otimes\Oc(h)\rangle\hspace{128pt} \text{(fourfold)}\\
        \dbcoh(R_5) &= \langle \Bc_5, \psi^*\dbcoh(\PP(\wedge^2\scrW)), \psi^*\dbcoh(\PP(\wedge^2\scrW))\otimes\Oc(h), \psi^*\dbcoh(\PP(\wedge^2\scrW))\otimes\Oc(2h)\rangle\hspace{9pt} \text{(fivefold)}
    \end{split}
\end{equation}
where in each of the decompositions, the component $\Bc_n$ is the derived category of modules over the even part of the Clifford algebra associated to $R_n$.

\begin{theorem}(Theorem \ref{theorem_main_first}) \label{thm:kuznetsov_component_clifford_component}
Let $X_n(3\leq n\leq 5)$ be an ordinary Gushel-Mukai $n$-fold with a single node $p\in X_n$. Then the \emph{categorical resolution} $\wt{\Kc u}(X_n)$ is given by $$\wt{\Kc u}(X_n)\simeq D^b(\mathbb{P}^2,\mathcal{C}_n),$$
where $D^b(\mathbb{P}^2,\mathcal{C}_n)$ is the derived category of coherent sheaves of modules on the even part of the Clifford algebra associated to a quadric fibration $R_n$ over $\mathbb{P}^2$ of relative dimension $n-2$. In particular, $\wt{\Kc u}(X_4)\simeq D^b(\mathbb{P}^2,\mathcal{C}_4)\simeq D^b(S,\alpha)$, where $S$ is a degree two $K3$ surface and $\alpha\in\mathrm{Br}(S)$ is the Brauer class. 
\end{theorem}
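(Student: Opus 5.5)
The plan is to compare two semiorthogonal decompositions of the same category. By the relative Atiyah flop (Diagram \ref{eq_keqdiagram}) and \cite{bondal_orlov_flop}, there is an equivalence $\Phi:=q_*p^*\colon\dbcoh(\wt X_n)\simeq\dbcoh(R_n)$. The flop is an isomorphism away from $S_X$ and $S_R$, and $h$ is pulled back from $T_n$ on both sides. Hence $\Phi$ sends line bundles $\Oc(ah+bH)$ to the corresponding line bundles on $R_n$, where on $R_n$ we use $\zeta=2h-H$ from Proposition \ref{lem:hyperplane_class_P2}, i.e.\ $H=2h-\zeta$.

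I would first check that $\Phi$ takes line bundles to line bundles. A line bundle $\Lc$ with small degree along the flopping curves, namely degree $0$ or $-1$ on the fibres of $\alpha$ as in the Bondal--Orlov window, satisfies $\Phi(\Lc)=$ the strict-transform line bundle. The line bundles $\Oc(h)$, $\Oc(2h-H)$ and $\Oc(H-2h)$ appearing in Proposition \ref{prop:mutations_inside_Xtilde} have degree $0$ or $\pm1$ on these lines, because $h$ is trivial on them and $H$ has degree one. If needed, I would pass through the window by twisting by $\Oc(kh)$, which commutes with $\Phi$.

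Next I transport the decomposition. Applying $\Phi$ to the decomposition of Proposition \ref{prop:mutations_inside_Xtilde}, the exceptional line bundles become $\Oc(-\zeta),\Oc,\Oc(\zeta)$ in the threefold case. In the fourfold case they become these together with their twists by $\Oc(h)$, and in the fivefold case with the twists by $\Oc(h)$ and $\Oc(2h)$. These are exactly $\psi^*\langle\Oc(-\zeta),\Oc,\Oc(\zeta)\rangle\otimes\Oc(kh)$ for $0\le k\le n-3$. By Beilinson's theorem on $\PP(\wedge^2\scrW)\simeq\PP^2$, each triple generates $\psi^*\dbcoh(\PP^2)\otimes\Oc(kh)$. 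Therefore
\begin{equation*}
\dbcoh(R_n)=\langle \Phi(\RR_{\Oc(-E)}\wt{\Kc u}(X_n)),\ \psi^*\dbcoh(\PP^2),\dots,\psi^*\dbcoh(\PP^2)\otimes\Oc((n-3)h)\rangle .
\end{equation*}
Comparing with \eqref{eq:SOD_quadric_fibration}, both decompositions have the same right-orthogonal piece, so the left components coincide. This gives $\wt{\Kc u}(X_n)\simeq\RR_{\Oc(-E)}\wt{\Kc u}(X_n)\simeq\Bc_n\simeq\dbcoh(\PP^2,\Cc_n)$, since mutations and twists are equivalences.

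For $n=4$, $R_4$ is a quadric surface fibration over $\PP^2$ whose discriminant is a smooth sextic $\Delta$ by Lemma \ref{lem:quadric_fibration}. By \cite{kuznetsov_quadric_fibrations_intersection_quadrics}, $\Cc_4$ is then the pushforward of an Azumaya algebra on the double cover $S\to\PP^2$ branched along $\Delta$. This $S$ is a degree two $K3$ surface, so $\dbcoh(\PP^2,\Cc_4)\simeq\dbcoh(S,\alpha)$.

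The main obstacle is the first step: verifying rigorously that $\Phi$ (or its variant $q_*p^*(-)\otimes$ a twist) sends the specific line bundles to the claimed ones. One must check vanishing of $R^1q_*$ of $p^*\Lc$ along the $\PP^1\times\PP^1$-bundle $F$, which holds when $\Lc$ has degree $\ge -1$ on the fibres of $\bar p$. If the degree $+1$ case fails, I would use $q_*p^*$ on one side and its adjoint on the other, and re-order via Serre-functor twists by $\omega=\Oc(-(n-2)h)$, which are compatible on both sides by Lemmas \ref{lem:E_and_canonical_bundles} and \ref{lem:quadric_fibration}.
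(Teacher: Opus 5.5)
Your route is the paper's: push the decomposition of Proposition~\ref{prop:mutations_inside_Xtilde} through $\Phi=q_*p^*$ and compare with Kuznetsov's decomposition of $\dbcoh(R_n)$ via Beilinson. However, the step you single out as the main obstacle is a genuine gap, and none of your proposed repairs closes it.

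On a flopping line $\ell$ (a fibre of $\alpha$) one has $h\cdot\ell=0$ and $H\cdot\ell=1$, so $\Oc(H-2h)$ has degree $+1$ on $\ell$. Comparing restrictions to the $\PP^1\times\PP^1$ fibres of $F$ gives $p^*\Oc(H-2h)\simeq q^*\Oc(-\zeta)\otimes\Oc(-F)$. Hence $\Phi(\Oc(H-2h))\simeq\Ic_{S_R}(-\zeta)$, which does not lie in $\psi^*\dbcoh(\PP^2)$. Your repairs do not change this. Twisting by $\Oc(kh)$ or by $\omega=\Oc(-(n-2)h)$ does not change degrees along $\ell$. The inverse functor $p_*(q^*(-)\otimes\Oc(F))$ sends a line bundle to a line bundle exactly when it has degree $0$ or $1$ on the fibres of $\beta$. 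Every window therefore has width two, whereas $\Oc(-\zeta),\Oc,\Oc(\zeta)$ occupy three consecutive degrees. The paper's proof makes the same claim, asserting $q_*p^*\Oc(a(H-2E)+bh)\simeq\Oc(a(H-2E)+bh)$ for all $a$ ``by the projection formula''. This fails for the same reason: on $\wt X_n$ the class $2h-H$ is not pulled back from $\PP^2$, since its linear system has base locus $S_X$.

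The problem actually sits upstream, so no cleverer comparison will fix it: the triple $\Oc(H-2h),\Oc,\Oc(2h-H)$ in Proposition~\ref{prop:mutations_inside_Xtilde} is not semiorthogonal. For $n=3$,
\[
\chi\bigl(\Oc(2H-4h)\bigr)=\chi\bigl(\Oc(4E-2H)\bigr)=\chi\bigl(X_3,\Oc(-2)\bigr)+\sum_{k=1}^{4}(k-1)^2=-8+14=6,
\]
and in fact $\Ext^\bullet(\Oc(2h-H),\Oc(H-2h))\simeq H^\bullet(R_3,\Ic_{S_R}^2(-2\zeta))\simeq\CC^6[-2]$.

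The culprit is Lemma~\ref{lem:left_mutation_through_O_tautological}. The evaluation map $\scrW^\vee\otimes\Oc\to\Uc^\vee(-E)$ drops rank exactly along $S_X$, where $U\cap\scrP\neq0$. So $\LL_\Oc\Uc^\vee(-E)$ has an extra cohomology sheaf supported on $S_X$: it is $\Phi^{-1}(\Oc(-\zeta))[1]$, not a line bundle.

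A repair is to stop one mutation earlier, at $\langle\RR_{\Oc(-E)}\wt{\Kc u}(X_3),\Oc,\Uc^\vee(-E),\Oc(2h-H)\rangle$. All three objects lie in the window, since $\Uc^\vee(-E)|_\ell\simeq\Oc(-1)\oplus\Oc$. Moreover $\Phi(\Uc^\vee(-E))\simeq\psi^*T_{\PP^2}(-1)$:
\begin{enumerate}
\item the kernel of $\Oc^{3}\to\Phi(\Uc^\vee(-E))$ is saturated;
\item it is generically equal to $(U+\scrP)^\perp\subset\scrW^\vee$, i.e.\ to the tautological subbundle $\psi^*\Oc(-1)$;
\item the induced injection $\psi^*T_{\PP^2}(-1)\to\Phi(\Uc^\vee(-E))$ has equal rank and determinant, hence is an isomorphism.
\end{enumerate}
Since $\langle\Oc,T_{\PP^2}(-1),\Oc(1)\rangle$ is a full exceptional collection on $\PP^2$, your comparison then goes through. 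For $n=4,5$ the $h$-twisted copies also lie in the window. Your treatment of $n=4$ via the Azumaya algebra on the double plane is fine (the paper cites Xie instead).
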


% \begin{theorem}\label{thm:kuznetsov_component_clifford_component}
%     Let $\wt X_n$ be the resolution of singularities of a one-nodal Gushel--Muai threefold or fourfold, call $R_n$ the quadric fibration associated to $\wt X_n$ by Diagram \ref{eq_keqdiagram}. Then, one has $\wt{\Kc u}(X_n)\simeq \Bc_n$.
% \end{theorem}

\begin{proof}
   Since $\wt X_n$ and $R_n$ are related by an Atiyah flop (Diagram \ref{eq_keqdiagram}), the functor $q_*p^*$ is an equivalence of categories \cite{bondalorlov}. Note that by the projection formula one has $q_*p^*\Oc(a(H-2E) + bh) = \Oc(a(H-2E) + bh)$ for all integers $a, b$, since $H-2E = -H+2h$ is the pullback of the hyperplane class from $\PP(\wedge^2\scrW)$ (see Lemma \ref{lem:hyperplane_class_P2}). \\Now, by Beilinson's semiorthogonal decomposition for the projective space \cite{beilinson}, one has $\dbcoh(\PP(\wedge^2\scrW)) = \langle\Oc(-2h+H), \Oc, \Oc(2h-H)\rangle$. Then, the desired equivalence follows by comparing Equation \ref{eq:SOD_quadric_fibration} with the image under $q_*p^*$ of the semiorthogonal decompositions of Proposition \ref{prop:mutations_inside_Xtilde}. Finally, the equivalence $D^b(\mathbb{P}^2,\mathcal{C}_4)\simeq D^b(S,\alpha)$ is a consequence of \cite[Theorem 5.10]{xie_quadric_bundles}.
\end{proof}
% \subsection{The kernel of the categorical resolution}

\begin{remark}
    Let $n$ be odd. Then, the semiorthogonal decomposition of Equation \ref{eq:sod_resolved_gm_nfold} can be further refined to the following:
    \begin{equation}\label{eq:sod_resolved_gm_nfold_alternative}
        \begin{split}
            \dbcoh(\wt X_n) = \langle & 
            \Oc_E((n-2)E), \dots, \Oc_E(2E), \Sc_E(E), \Oc_E(E), \\
            & \wt{\Kc u}(X_n), \Oc, \Uc^\vee, \dots, \Oc((n-3)H), \Uc^\vee((n-3)H)\rangle,
        \end{split}
    \end{equation}
    where $\Sc_E$ is the spinor bundle of the exceptional quadric $E$. This follows from applying \cite[Theorem 1]{kuznetsov_resolutions_of_singularities} to the following semiorthogonal decomposition:
    \begin{equation*}
        \dbcoh(E) = \langle \Oc_E((n-2)E), \dots, \Oc_E(2E), \Sc_E(E), \Oc_E(E), \Sc_E, \Oc_E \rangle.
    \end{equation*}
    Then, by replacing Equation \ref{eq:sod_resolved_gm_nfold} with \ref{eq:sod_resolved_gm_nfold_alternative} in the proof of Proposition \ref{prop:mutations_inside_Xtilde} for $n$ odd, we obtain:
    \begin{equation*}
        D^b(\mathbb{P}^2,\mathcal{C}_n) \simeq \langle \Fc, \wt{\Kc u}(X_n) \rangle,
    \end{equation*}
    where $\Fc$ is an exceptional object, This phenomenon will be object of further study. 
\end{remark}

%Since we include the result about Verra threefolds here, I propose a new title
\section{Nodal GM varieties and hyperbolic reduction}\label{sec:duality_nodal_GM}

Kuznetsov and Perry have conjectured the existence of a relation between Kuznetsov components of Gushel--Mukai varieties of different dimensions \cite[Conjecture 3.7 and the following discussion]{kuznetsovperry}. Such conjecture has been proven later by the same authors, in the smooth case, with the tool of categorical joins. In particular, one has the following:

{
%\color{purple}
%I rewrote the following theorem in a more condensed way, consistent with our notation. The old version is commented below, if you prefer it.
\begin{theorem}\cite[Theorem 1.6]{kuznetsov_perry_categorical_cones}\label{thm:categorical_cones_and_GMs}
    Let $Y_n$ be a smooth Gushel--Mukai variety of dimension $n$, with $5\leq n\leq 6$. Then there is a smooth Gushel--Mukai $n-2$-fold $Y_{n-2}$ such that $\Kc u(Y_n) \simeq \Kc u(Y_{n-2})$.
\end{theorem}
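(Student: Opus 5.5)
This statement is Kuznetsov--Perry's duality theorem, \cite[Theorem 1.6]{kuznetsov_perry_categorical_cones}, cited rather than proved here. I will only sketch the route one would take to reprove it.

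The plan rests on the theory of categorical joins and cones. A smooth GM variety $Y_n$ is encoded by Lagrangian data $(V_6, V_5, A)$. Here $A\subset\wedge^3V_6$ is a Lagrangian subspace without decomposable vectors, and $Y_n$ is determined by $A$ together with a hyperplane $V_5\subset V_6$ and a subspace $W\subset\wedge^2V_5$ (or its special analogue). The first step is to use the Debarre--Kuznetsov classification to produce, from the data of $Y_n$, a smooth GM variety $Y_{n-2}$ of the same type (ordinary or special). It should share the same $A$ but be of dimension $n-2$; in the ordinary/special switching one passes to the dual Lagrangian $A^\perp\subset\wedge^3V_6^\vee$, giving a generalized partner or dual.

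The second step expresses both Kuznetsov components in terms of the Lagrangian data. One realizes $\Kc u(Y_n)$ as a component of the derived category of a linear section of a categorical cone over a homological projective dual of $G(2,V_5)$, or of the quadric side. The key input is the quadric homological projective duality of Kuznetsov--Perry. This identifies, for the EPW-type family of quadrics attached to $A$, the HPD of the double cover of $\PP(V_6)$ (or of the relevant quadric fibration) with a category determined only by $A$. Combined with the categorical join formalism, the nontrivial component of any GM intersection depends only on $A$ up to equivalence, and the dimension changes by multiples of two.

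The third step is the HPD theorem applied twice: once for the Grassmannian side (Kuznetsov's HPD for $G(2,5)$) and once for the quadric side. Taking the intersection of both linear sections of the categorical join, the components of $Y_n$ and $Y_{n-2}$ are then matched. The main obstacle is the compatibility of the categorical cone and join constructions with the $\mathbb{Z}/2$-equivariant (double cover) structure and with the quadric HPD. This is where the full machinery of \cite{kuznetsov_perry_categorical_cones} is needed; I would not attempt to redo it but cite it.
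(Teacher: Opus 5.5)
Citing \cite[Theorem 1.6]{kuznetsov_perry_categorical_cones} is all the paper does, so your final argument agrees with it. However, Kuznetsov--Perry's theorem is stated for \emph{generalized partners or duals} whose dimensions have the same parity. Partners have the same Lagrangian subspace $A$; duals have $A$ and $A^\perp$. So the formulation above also needs the existence of a smooth $(n-2)$-dimensional partner or dual. That is your step 1, the only step you actually have to supply, and it is wrong as stated.

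By Debarre--Kuznetsov, the data $(V_6,V_5,A)$ together with the type already determine the GM variety, so $W$ is not an extra datum. Its dimension is $5-\dim(A\cap\wedge^3V_5)$ in the ordinary case and $6-\dim(A\cap\wedge^3V_5)$ in the special case.
\begin{itemize}
\item A special threefold partner of a special fivefold would need a hyperplane $V_5'\subset V_6$ with $\dim(A\cap\wedge^3V_5')=3$.
\item A special threefold dual would need $v\in\PP(V_6)$ with $\dim(A\cap(v\wedge\wedge^2V_6))=3$.
\item For general $A$ neither exists, since the EPW strata of index $3$ are empty.
\end{itemize}
So ``of the same type'' fails for special fivefolds. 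Changing the type also has nothing to do with passing to $A^\perp$: partner versus dual and ordinary versus special are independent. Kuznetsov--Perry allow a change of type as long as the parity of the dimension is kept.

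A correct step 1 keeps $A$ and picks $V_5'\subset V_6$ with $\dim(A\cap\wedge^3V_5')=2$ for $n=5$ (an ordinary threefold), or $=1$ for $n=6$ (an ordinary fourfold). Such hyperplanes exist because, for $A$ without decomposable vectors, the dual EPW sextic is a hypersurface whose singular locus is a surface, while the index-$3$ stratum is finite. The resulting GM variety is smooth by the Debarre--Kuznetsov criterion: dimension at least $3$ and no decomposable vectors in $A$.

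Your step 2 misdescribes the mechanism, though this does not break your argument, since you defer to the reference anyway. Quadratic HPD in \cite{kuznetsov_perry_categorical_cones} describes the HPD of a single quadric in terms of its classical projective dual, with a double cover branched along a quadric appearing according to parity. It is not about the EPW family of quadrics or a double cover of $\PP(V_6)$, and no category depending only on $A$ is constructed. Instead, a GM variety is written as the intersection of a cone over $\Gr(2,V_5)$ with a quadric. The nonlinear HPD theorem is then applied to this pair, fed by:
\begin{itemize}
\item Kuznetsov's HPD for $\Gr(2,V_5)$;
\item the compatibility of HPD with categorical cones;
\item quadratic HPD.
\end{itemize}
The fiber product on the dual side is a generalized dual, possibly of a different dimension of the same parity. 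For instance, an ordinary fivefold $\Gr(2,V_5)\cap Q$ is matched directly with a threefold in this way.
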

}

% \begin{theorem}\cite[Theorem 1.6

%+---]{kuznetsov_perry_categorical_cones}\label{thm:categorical_cones_and_GMs}
%     Let $X$ be a smooth Gushel--Mukai variety. Then:
%     \begin{enumerate}
%         \item if $X$ is a sixfold, then its Kuznetsov component is equivalent to a Gushel--Mukai fourfold's Kuznetsov component
%         \item if $X$ is a fivefold, then its Kuznetsov component is equivalent to a Gushel--Mukai threefold's Kuznetsov component
%        % \item if $X$ is a fourfold, then its Kuznetsov component is equivalent to a the derived category of a Gushel--Mukai surface.
%     \end{enumerate}
% \end{theorem}

In the following, as an application of Theorem \ref{thm:kuznetsov_component_clifford_component}, we extend Theorem \ref{thm:categorical_cones_and_GMs} to the case of one-nodal ordinary Gushel--Mukai varieties, with a codimension two condition on the choice of the nodal quadric.

\subsection{Hyperbolic reduction in families}\label{sec:quadric_reductions}
Hyperbolic reduction is a classical construction, let us summarize it here. Let $Q\subset\PP^n$ be a quadric hypersurface and $x\in Q$. Consider the affine tangent bundle $\wt T$ described by the following short exact sequence:
\begin{equation*}
    0\arw\Oc(-1)\arw\wt T\arw T(-1)\arw 0
\end{equation*}
where $T$ is the tangent bundle. Call $\PP(\wt T_x)\simeq\PP^{n-1}$ the \emph{embedded tangent space} (the projectivization of the fiber over $x$ of the affine tangent bundle). Then, the projection $\pi_x:\PP^{n}\dashrightarrow \PP^{n-1}$ maps $Q\setminus x$ to a quadric $Q_x$, isomorphic to $Q\cap\PP(\wt T_x)$. The hyperbolic reduction $\wt Q$ of $Q$ with respect to $x$ is a quadric of dimension $n-3$ obtained as the projection of $Q_x$ from its vertex. If $Q$ is smooth, $\wt Q$ is smooth as well.\\
\\
This construction has been described in families as follows (see \cite{auel_bernardara_bolognesi_quadric_fibrations} for more details, and a more general construction): let $\Ec$ be a globally generated vector bundle of rank $n$ over a smooth projective base $S$, call $p:\PP(\Ec^\vee)\arw S$ the associated projective bundle map. Let $\Lc$ be the line bundle restricting to $\Oc(1)$ to each fiber, and such that $p_*\Lc\simeq \Ec$. A quadric fibration $R$ over $S$, of relative dimension $n-2$, will be the zero locus of a section $\sigma\in H^0(\PP(\Ec^\vee), \Lc^{\otimes 2})$. Clearly, one can visualize $\sigma$ as a symmetric morphism $\Ec^\vee\arw\Ec$, which we will still denote by $\sigma$.\\
\\
Now, we say that a subbundle $\Fc\subset \Ec$ is $\sigma$-\emph{isotropic} if $\sigma|_\Fc = 0$. Let us take a $\sigma$-isotropic line bundle $\Fc\subset\Ec$. Then, the hyperbolic reduction associated to $R$ and $\Fc$ is given by the vanishing of the restriction of $\sigma$ to $\Fc^\perp / \Fc$, i.e. a quadric fibration of relative dimension $n-4$. It is a known fact that the operation of hyperbolic reduction in families preserves the discriminant divisor in $S$ \cite{auel_bernardara_bolognesi_quadric_fibrations}.

Let us specialize this picture to our setting. Consider the vector bundle $\Ec:= \scrP^\vee\otimes T(-\zeta)\oplus\Oc(\zeta)$ on $\PP(\wedge^2\scrW)$. This is a globally generated, homogeneous vector bundle of rank five. Then, as a divisor in $\PP(\Ec^\vee)$, one has $R_5 = 2 h $: that is, one has $R_5 \simeq\VV(\sigma)\subset\PP(\Ec^\vee)$ where $\sigma\in H^0(\PP(\Ec^\vee), \Oc(2 h ))\simeq H^0(\PP(\wedge^2\scrW), \Sym^2\Ec)$.\\
\\
On the other hand, for $2\leq n \leq 4$, one has that $R_n$ is the zero locus of an element of $H^0(\PP(\wt\Ec_n^\vee, \Oc(2 h ))\simeq H^0(\PP(\wedge^2\scrW), \Sym^2\wt\Ec_k)$ where $\wt\Ec_n$ is a rank $n$ vector bundle defined by the following exact sequences (see the proof of Lemma \ref{lem:projective_bundle}):

\begin{equation}\label{eq:sequences_E_tilda}
    \begin{split}
        & 0\arw \Oc\arw\Ec\arw\wt\Ec_4\arw 0 \hspace{40pt} \text{(fourfold case)}\\
        & 0\arw \Oc^{\oplus 2} \arw\Ec\arw\wt\Ec_3\arw 0 \hspace{30pt} \text{(threefold case)} \\
        & 0\arw \Oc^{\oplus 3} \arw\Ec\arw\wt\Ec_3\arw 0 \hspace{30pt} \text{(surface case)}
    \end{split}
\end{equation}

\begin{proposition}\label{prop:isotropic_embedding_odd}
    Fix an embedding $\iota: \Oc^{\oplus 2}\xhookrightarrow{\,\,\,\,\,}\Ec$. Then, there is a codimension two subspace $\Hc_\iota\subset H^0(\PP(\wedge^2 \scrW),\Sym^2\Ec)$ such that for every $\sigma\in\scrH_\iota$, the embedding $\iota$ is $\sigma$-isotropic. 
\end{proposition}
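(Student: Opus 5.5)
The plan is to make the isotropy condition explicit. With the conventions above, $\sigma$ is a symmetric morphism $\Ec^\vee\arw\Ec$, i.e. a section of $\Sym^2\Ec$, and it vanishes on a subbundle $\Fc\subset\Ec$ when it restricts to zero on $\Fc$. For $\Fc=\iota(\Oc^{\oplus 2})$, the condition $\sigma|_\Fc=0$ means that the image of $\sigma$ under the restriction map
\begin{equation*}
    r_\iota: H^0(\PP(\wedge^2\scrW),\Sym^2\Ec)\arw H^0(\PP(\wedge^2\scrW),\Sym^2\Fc^\vee)
\end{equation*}
is zero. This map is induced by the dual surjection $\iota^\vee:\Ec^\vee\twoheadrightarrow\Oc^{\oplus 2}$, which gives $\Sym^2\Ec^\vee\arw\Sym^2\Oc^{\oplus 2}$. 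Geometrically, the condition says that the quadric $\VV(\sigma)$ contains the sub-projective bundle $\PP(\Fc)\simeq\PP(\wedge^2\scrW)\times\PP^1$ in each fibre.

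I would then define $\Hc_\iota:=\ker(r_\iota)$. This is automatically a linear subspace, so the whole statement reduces to computing its codimension. The target is $H^0(\PP^2,\Sym^2\Oc^{\oplus 2})$, which has dimension $3$, because $\Sym^2$ of a rank two trivial bundle is $\Oc^{\oplus 3}$. So the codimension of $\Hc_\iota$ equals $\rk r_\iota$, and I must show this rank is exactly $2$ rather than $3$.

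The key input is that $\Ec=\scrP^\vee\otimes T(-\zeta)\oplus\Oc(\zeta)$ has very few trivial summands in its global sections. Concretely, $H^0(\Ec)=\scrP^\vee\otimes H^0(T(-\zeta))\oplus H^0(\Oc(\zeta))$, and $H^0(T_{\PP^2}(-1))\simeq \wedge^2\scrW^{*}$-type space of dimension $3$. A map $\iota:\Oc^{\oplus2}\arw\Ec$ is a pair of sections $s_1,s_2$. The pairing $\sigma(s_i,s_j)$ is a function on $\PP^2$, and I would write $\sigma$ in components. The $\Sym^2(\scrP^\vee\otimes T(-\zeta))$ part contains $\wedge^2\scrP^\vee\otimes\wedge^2(T(-\zeta))=\wedge^2\scrP^\vee\otimes\Oc(\zeta)$, while $\Sym^2\scrP^\vee\otimes\Sym^2 T(-\zeta)$ has no constant part. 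So the constants pairing two sections of $T(-\zeta)$ come through a skew form. The expectation is that for an embedding, one of the three conditions $\sigma(s_1,s_1)$, $\sigma(s_2,s_2)$, $\sigma(s_1,s_2)$ is implied by the others, or else is identically satisfied. This would come from the symmetry/antisymmetry interplay between $\scrP^\vee$ and the skew pairing on $T(-\zeta)$.

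The main obstacle is pinning down exactly which of the three scalar conditions are independent for a general $\iota$. This depends on how $s_1,s_2$ decompose among the summands. My first attempt would be a direct coordinate computation: choose bases, write $s_i=(a_i\otimes v_i, c_i)$, and evaluate $r_\iota$ on generators of $H^0(\Sym^2\Ec)$ (products of sections, plus the $\wedge^2\scrP^\vee\otimes\Oc(\zeta)$ and $\Oc(2\zeta)$ pieces). I would then check that the image is a $2$-dimensional subspace of $\CC^3$. A sanity check is the geometric count: isotropy of a rank-two trivial subbundle should cost $\dim\Sym^2\CC^2=3$ conditions minus the one automatically satisfied. This matches the codimension two in Theorem \ref{main_theorem_KP_nodal}. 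If the image turned out to be all of $\CC^3$, I would instead restrict $\iota$ to those embeddings landing in the $\scrP^\vee\otimes T(-\zeta)$ summand, where the skew-symmetry kills one condition.
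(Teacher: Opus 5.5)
\textbf{There is a genuine gap.} Your reduction makes the whole proposition equivalent to one claim: $\rk r_\iota=2$. Your $\Hc_\iota=\ker r_\iota$ has target $H^0(\Sym^2\Oc^{\oplus 2})\simeq\CC^3$, so its codimension is this rank, and you never compute it. You only state an expectation, and the heuristic behind it produces no linear relation among the three functionals $\sigma\mapsto\sigma_{11},\sigma_{12},\sigma_{22}$. For instance, $\Sym^2\scrP^\vee\otimes\Sym^2T(-\zeta)$ has no trivial summand, but it has an $18$-dimensional space of sections, since $h^0(\Sym^2T(-\zeta))=6$. So nothing confines the relevant contributions to the piece $\wedge^2\scrP^\vee\otimes\Oc(\zeta)$. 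Your fallback of restricting to embeddings landing in $\scrP^\vee\otimes T(-\zeta)$ is not available. In the statement $\iota$ is fixed and arbitrary, and in Theorem \ref{thm:GM_duality_odd} it comes from two general hyperplane sections, whose $\Oc(\zeta)$-components are nonzero.

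\textbf{The map $r_\iota$ is not set up correctly.} The dual $\iota^\vee$ induces $\Sym^2\Ec^\vee\arw\Sym^2\Oc^{\oplus 2}$, but $\sigma$ is a section of $\Sym^2\Ec$. Restricting $\sigma$ to $\Fc=\iota(\Oc^{\oplus 2})\subset\Ec$ would need a map $\Oc^{\oplus 2}\arw\Ec^\vee$. However $H^0(\Ec^\vee)=H^0(\scrP\otimes\Omega^1_{\PP^2}(\zeta)\oplus\Oc(-\zeta))=0$. For the same reason $\PP(\Fc)$ does not sit inside $\PP(\Ec^\vee)$, so your geometric reformulation does not make sense as written. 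The sections $s_1,s_2$ of $\Ec$ cut out hyperplanes in the fibres; they are not points of them.

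\textbf{How the paper gets two.} It does not show that one of three conditions is redundant. It writes $\sigma$ as a $3\times 3$ block matrix with respect to a local splitting $\Ec\simeq\Oc^{\oplus 2}\oplus\wt\Ec_3$ and treats $\sigma_{11},\sigma_{12},\sigma_{22}$ as scalars. It then takes the isotropy condition to be $\sigma_{11}=\sigma_{22}=0$, counted as two scalar linear conditions, hence codimension two. The off-diagonal entry $\sigma_{12}$ is simply not imposed. So the paper's ``two'' comes from which entries are required to vanish, not from a rank computation.

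Your definition requires the whole restriction, all three entries, to vanish. That is a priori a condition of codimension up to three. To reach the stated codimension you would need a genuine dependency among the three functionals, valid for an arbitrary fixed $\iota$, and your proposal does not supply one.
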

\begin{proof}
    Pick a $\sigma$ and consider an embedding:
    \begin{equation*}
        \iota : \Oc^{\oplus 2}\xhookrightarrow{\,\,\,\,\,}\Ec.
    \end{equation*}
    This induces a local decomposition of $\sigma$ (seen as a symmetric morphism $\Ec^\vee\arw\Ec$) as follows:
    \begin{equation*}
        \sigma \simeq \left(
            \begin{array}{ccc}
                \sigma_{11} & \sigma_{12} & \sigma_{13} \\
                \sigma_{12} & \sigma_{22} & \sigma_{23} \\
                \sigma_{13} & \sigma_{23} & \sigma_{33}
            \end{array}
        \right)
    \end{equation*}
    where $\sigma_{11}, \sigma_{22}, \sigma_{12}$ are scalars, $\sigma_{13}$ and $\sigma_{23}$ are morphisms $\Oc\arw\wt\Ec_3$, and $\sigma_{33}$ is a symmetric morphism $\wt\Ec_3^\vee\arw\wt\Ec_3$. For a fixed $\iota$, the $\sigma$-isotropy condition on $\iota$ is a codimension two condition ($\sigma_{11} = \sigma_{22}=0$) in the space $H^0(\PP(\wedge^2 \scrW),\Sym^2\Ec)$.
\end{proof}

Identically, one proves the following:

\begin{proposition}\label{prop:isotropic_embedding_even}
    Fix an embedding $\iota: \Oc^{\oplus 2}\xhookrightarrow{\,\,\,\,\,}\wt \Ec_4$. Then, there is a codimension two subspace $\wt \Hc_\iota\subset H^0(\PP(\wedge^2 \scrW),\Sym^2\wt\Ec_4)$ such that for every $\sigma\in\wt\scrH_\iota$, the embedding $\iota$ is $\sigma$-isotropic.    
\end{proposition}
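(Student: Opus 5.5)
The plan is to translate $\sigma$-isotropy of the rank-two subbundle $\Fc := \iota(\Oc^{\oplus 2}) \subset \wt\Ec_4$ into the vanishing of a linear map defined on $H^0(\PP(\wedge^2\scrW),\Sym^2\wt\Ec_4)$. Then $\wt\scrH_\iota$ is its kernel, and its codimension equals the rank of the map. This follows the proof of Proposition \ref{prop:isotropic_embedding_odd}, with $\Ec$ replaced by $\wt\Ec_4$; one should also check that a splitting exists, as that proof implicitly assumes.

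\textbf{Step 1: block decomposition.} Since $\PP(\wedge^2\scrW)\simeq\PP^2$ and $\Ext^1(\wt\Ec_4/\Fc,\Oc)=H^1((\wt\Ec_4/\Fc)^\vee)$, I would first verify that this group vanishes for the general embedding $\iota$. This would follow from the defining sequences \eqref{eq:sequences_E_tilda} and Bott vanishing for $\Omega^1_{\PP^2}(\zeta)$ and $\Oc(\pm\zeta)$. It yields a splitting $\wt\Ec_4\simeq\Oc^{\oplus 2}\oplus\Ec'$ with $\Ec'$ of rank two.

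In this splitting $\sigma$ decomposes into blocks:
\begin{itemize}
\item a symmetric $2\times 2$ block $(\sigma_{ij})_{i,j\le 2}$ with entries in $H^0(\Oc)=\CC$;
\item mixed blocks $\sigma_{i3}\in\Hom(\Oc,\Ec')$ for $i=1,2$;
\item a block $\sigma_{33}\in H^0(\Sym^2\Ec')$.
\end{itemize}

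\textbf{Step 2: the isotropy condition as a linear map.} Isotropy of $\Fc$ means that the restriction of $\sigma$ to $\Fc$ vanishes. This is the linear map
\[
r_\iota\colon H^0(\PP(\wedge^2\scrW),\Sym^2\wt\Ec_4)\arw H^0(\PP(\wedge^2\scrW),\Sym^2\Fc)\simeq\Sym^2\CC^2,\qquad \sigma\mapsto(\sigma_{11},\sigma_{12},\sigma_{22}).
\]
Its kernel is a linear subspace, and it is exactly the locus where $\iota$ is $\sigma$-isotropic. So it remains to compute $\rk r_\iota$. Surjectivity of $r_\iota$ onto each entry can be checked by writing down sections of $\Sym^2\wt\Ec_4$ supported on each block, using the splitting from Step 1.

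\textbf{Step 3: the codimension count, which is the main obstacle.} The target of $r_\iota$ is three-dimensional, so the naive count gives codimension three rather than two: all of $\sigma_{11},\sigma_{22},\sigma_{12}$ must vanish for the plane $\Fc$ itself to be isotropic. To reach codimension two, I would try to show that one condition is automatic on the relevant space of $\sigma$. Concretely, the $\sigma$ relevant to $R_4$ come from restricting quadrics in $\PP(\scrP\otimes\scrW\oplus\wedge^2\scrW)$ that are singular at $p$, so the admissible $\sigma$ may already satisfy one linear relation among $\sigma_{11},\sigma_{12},\sigma_{22}$, for instance through the constraint on the $\Oc(\zeta)$-summand coming from Proposition \ref{lem:hyperplane_class_P2}. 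If that fails, the honest outcome is a codimension-three subspace, which still suffices for the application to hyperbolic reduction. In that case I would state the codimension as at most three, and check that the general $\sigma$ in it keeps $R_4$ smooth with smooth sextic discriminant by a Bertini-type argument on $\PP(\wt\Ec_4^\vee)$.
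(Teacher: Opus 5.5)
Your proposal uses the same block decomposition of $\sigma$ as the paper, which proves this statement ``identically'' to Proposition \ref{prop:isotropic_embedding_odd}. However, it does not prove the statement, as you essentially concede in Step 3. You are right that total isotropy of the rank-two image $\Fc$ requires the whole $2\times 2$ block to vanish, that is $\sigma_{11}=\sigma_{12}=\sigma_{22}=0$. If $r_\iota$ were surjective, as Step 2 asserts, its kernel would have codimension three, and then no codimension-two subspace of such $\sigma$ could exist at all. Your rescue does not work. $\wt\Hc_\iota$ must be a subspace of the whole space $H^0(\PP(\wedge^2\scrW),\Sym^2\wt\Ec_4)$, so its codimension is computed there, and singling out ``admissible'' $\sigma$ coming from quadrics singular at $p$ cannot lower it. Falling back to ``codimension at most three'' simply changes the statement. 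The paper reaches two only because its count imposes $\sigma_{11}=\sigma_{22}=0$ and nothing on the mixed entry $\sigma_{12}$. In other words, it asks each of the two summands $\Oc\hookrightarrow\wt\Ec_4$ to be isotropic separately. Your count therefore points to a real discrepancy with the paper's. Still, to obtain the proposition as stated you would have to adopt that weaker condition, or show that $\sigma_{12}=0$ is not needed for the intended hyperbolic reduction, and the proposal does neither.

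Independently, Step 1 is false, and Step 2 rests on it. Since $\Ec\arw\wt\Ec_4$ is surjective, $\Hom(\wt\Ec_4,\Oc)\subset\Hom(\Ec,\Oc)=H^0(\scrP\otimes\Omega^1_{\PP^2}(\zeta)\oplus\Oc(-\zeta))=0$. So $\wt\Ec_4$ has no trivial direct summand, and $0\arw\Fc\arw\wt\Ec_4\arw\wt\Ec_4/\Fc\arw 0$ never splits. Concretely, lifting $\iota$ to $\Ec$ (possible since $H^1(\Oc)=0$) gives $\wt\Ec_4/\Fc\simeq\Ec/\Oc^{\oplus 3}$. Since $\Ec^\vee$ is acyclic, the dual sequence yields $\Ext^1(\wt\Ec_4/\Fc,\Oc)\simeq H^0(\Oc^{\oplus 3})=\CC^3$, which is not zero; Bott vanishing does not give what you claim. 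Moreover, $H^0(\Sym^2\wt\Ec_4^\vee)\subset H^0(\Sym^2\Ec^\vee)=0$, so there is no nonzero morphism $\Sym^2\wt\Ec_4\arw\Oc$ at all. Consequently the ``restriction'' $r_\iota$ to $H^0(\Sym^2\Fc)\simeq\Sym^2\CC^2$ and the global scalars $\sigma_{ij}$ do not exist. Surjectivity also cannot be checked on block summands that are not there; the paper, for its part, only speaks of a local decomposition. The underlying reason is that $\sigma\in\Sym^2\wt\Ec_4$ is a quadratic form on $\wt\Ec_4^\vee$, not on $\wt\Ec_4$, so ``$\sigma|_\Fc$'' for $\Fc\subset\wt\Ec_4$ has no intrinsic meaning. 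A correct argument must first make the isotropy condition on $\iota$ precise before any codimension can be counted.
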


% \begin{lemma}
%     There is a codimension one spaces of sections in $\scrH\subset H^0(\PP(\Ec^\vee)$ satisfying the following condition: for every $\sigma\in\scrH$ the embedding $\iota: \Oc\xhookrightarrow{\,\,\,\,\,}\Ec$ of Equation \ref{eq:sequences_E_tilda} is $\sigma$-isotropic.
% \end{lemma}
% \begin{proof}  
%     A section $\sigma\in H^0(\PP(\wedge^2\scrW), \Sym^2\Ec)$ can be seen as a symmetric morphism $\Ec^\vee\arw \Ec$. Therefore, locally, the embedding $\iota: \Oc\xhookrightarrow{\,\,\,\,\,}\Ec$ induces a (local) decomposition of $\sigma$ as a block matrix as follows:
%     \begin{equation}
%         \sigma \simeq \left(
%         \begin{array}{cc}
%             \sigma_0 & \sigma_1 \\
%             \sigma_1^\vee & \sigma_2
%         \end{array}
%         \right)
%     \end{equation}
%     where $\sigma_0:\Oc\arw\Oc$ is a scalar, $\sigma_1:\wt\Ec_4^\vee\arw\Oc$ is identified with a section of $\wt\Ec_4$ and $\sigma_2:\wt\Ec_4^\vee \arw\wt\Ec_4$ is a symmetric morphism, identified with a section of $\Sym^2\wt\Ec_4$.\\
%     The condition of $\sigma$-isotropy of $\iota$ consists in the vanishing of $\sigma_0$, which is a condition of codimension one in $H^0(\PP(\wedge^2\scrW), \Sym^2\Ec)$.
% \end{proof}

Now, observe that embeddings $\Oc\xhookrightarrow{\,\,\,\,\,}\Ec$ are in one-to-one correspondence with hyperplane sections of $\Grass$ passing through the node. In fact, such a hyperplane section is the premiage of a general hyperplane section of $\PP(\scrP\otimes\scrW\oplus\wedge^2\scrP)$ with respect to the projection from $\PP(\wedge^2\scrV)$, and one has $H^0(\PP(\scrP\otimes\scrW\oplus\wedge^2\scrP), \Oc(h))\simeq H^0(\PP(\wedge^2\scrW), \Ec^\vee)$ (recall that $\Oc(h)$ is a relative hyperplane bundle of $\PP(\Ec^\vee)$). Therefore, the choice of a nodal Gushel--Mukai variety of dimension $n<5$ defines a morphism from a trivial bundle of rank $5-n$ to $\Ec$ as in Equation \ref{eq:sequences_E_tilda}.

\begin{theorem}\label{thm:GM_duality_odd}(Theorem \ref{main_theorem_KP_nodal} (1)).
    Choose two general hyperplane sections $s_1, s_2$ of $\Grass$ passing through the node. Then, in the space of quadric hypersurfaces in $\PP(\wedge^2\scrV)$ passing through the node, there is a codimension two subspace $\scrH$ such that for every $\scrQ\in\scrH$, the Gushel--Mukai fivefold $X_5:= \Grass\cap\scrQ$ enjoys the following property: there is a nodal Gushel--Mukai threefold $X_3 = \VV(s_1, s_2)\cap X_5$ such that $\wt{\Kc u}(X_3)\simeq \wt{\Kc u}(X_5)$.
\end{theorem}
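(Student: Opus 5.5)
The strategy is to pass through the quadric fibrations of Section \ref{sec:birational_map} and reduce everything to hyperbolic reduction. By Theorem \ref{thm:kuznetsov_component_clifford_component} we have $\wt{\Kc u}(X_5)\simeq \Bc_5$ and $\wt{\Kc u}(X_3)\simeq\Bc_3$, where $\Bc_n$ is the Clifford component of the quadric fibration $R_n\arw\PP(\wedge^2\scrW)$. It therefore suffices to exhibit $R_3$ as the hyperbolic reduction of $R_5$ along a $\sigma$-isotropic subbundle. The equivalence $\Bc_5\simeq\Bc_3$ then follows from \cite{auel_bernardara_bolognesi_quadric_fibrations}, which states that hyperbolic reduction preserves the derived category of modules over the even Clifford algebra.

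\textbf{Step 1: translate the data.} I would first translate the data. The two hyperplane sections $s_1,s_2$ through the node correspond, via $H^0(\PP(\scrP\otimes\scrW\oplus\wedge^2\scrW),\Oc(h))\simeq H^0(\PP(\wedge^2\scrW),\Ec^\vee)$, to a morphism $\Ec\arw\Oc^{\oplus 2}$. Its kernel defines the bundle of the threefold construction, namely the $\ker(\nu)$ of Lemma \ref{lem:projective_bundle}. For general $s_i$ this morphism is surjective, since the degeneracy locus is empty by the dimension count in that lemma. Dually, this is equivalent to an embedding $\iota:\Oc^{\oplus 2}\hookrightarrow\Ec$ of sequence \eqref{eq:sequences_E_tilda}. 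The nodal quadric $\scrQ$ determines $\sigma\in H^0(\PP(\wedge^2\scrW),\Sym^2\Ec)$ with $R_5=\VV(\sigma)$. I would check that the correspondence between quadrics singular at $p$ and such $\sigma$ is linear: it goes through projection from $p$ followed by the strict transform of Proposition \ref{lem:hyperplane_class_P2}. Under this correspondence, the subspace $\Hc_\iota$ of Proposition \ref{prop:isotropic_embedding_odd} pulls back to a codimension two subspace $\scrH$ of nodal quadrics, and this $\scrH$ is the one in the statement.

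\textbf{Step 2: identify the hyperbolic reduction.} For $\sigma\in\Hc_\iota$, the subbundle $\Fc=\iota(\Oc^{\oplus 2})$ is $\sigma$-isotropic. The hyperbolic reduction is cut out by $\sigma$ restricted to $\Fc^\perp/\Fc$, a bundle of rank one. Here I have to be careful about the rank bookkeeping. Reducing along a rank two isotropic subbundle lowers the relative dimension by four, and $R_5$ has relative dimension $3$, so this cannot be the intended operation. I would instead read the dictionary the way the paper's sequences suggest. On the $R_3$ side, the hyperplane sections impose the linear conditions that pass from $\Ec$ to the quotient, or dually to $\ker$, which is the rank $3$ bundle $\wt\Ec_3$. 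The relevant isotropic subbundle for the reduction is then the line $\Fc=\Oc\subset \Ec$ given by one combination of the sections, and $\Fc^\perp/\Fc$ should be identified with $\wt\Ec_3$. The second vanishing condition $\sigma_{22}=0$ is what makes the quotient restriction of $\sigma$ agree with $\sigma_{33}$. The plan is therefore to show, in the local block form of Proposition \ref{prop:isotropic_embedding_odd}, the following: when $\sigma_{11}=\sigma_{22}=0$, the orthogonal of the first summand modulo itself can be identified with $\wt\Ec_3$ (using genericity, e.g. $\sigma_{12}$ a nonzero scalar), and the induced form is $\sigma_{33}$, whose zero locus is exactly $R_3$. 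This identification of the reduced quadric fibration with $R_3=\VV(\sigma|_{\wt\Ec_3})$ is the step I expect to be the main obstacle, and it is where genericity of $\scrQ\in\scrH$ enters.

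\textbf{Step 3: conclude.} Hyperbolic reduction preserves the discriminant sextic $\Delta$, which gives a sanity check that $R_3$ is the fibration attached to $X_3=\VV(s_1,s_2)\cap X_5$. Then \cite{auel_bernardara_bolognesi_quadric_fibrations} gives $\dbcoh(\PP^2,\Cc_5)\simeq\dbcoh(\PP^2,\Cc_3)$, and combining this with Theorem \ref{thm:kuznetsov_component_clifford_component} twice yields $\wt{\Kc u}(X_3)\simeq\wt{\Kc u}(X_5)$. Finally, for general $\scrQ\in\scrH$ one needs $X_5$ and $X_3$ to remain one-nodal with smooth $\Gamma$. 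This is an open condition on $\scrH$, which I would verify by exhibiting a single example.
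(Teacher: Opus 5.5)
\textbf{The gap is in your Step 2.} Your overall route is the paper's. You apply Theorem~\ref{thm:kuznetsov_component_clifford_component} to both $X_5$ and $X_3$, realize $R_3$ as a hyperbolic reduction of $R_5$ using Proposition~\ref{prop:isotropic_embedding_odd}, and conclude by \cite{auel_bernardara_bolognesi_quadric_fibrations}. Your rank objection is correct. It applies equally to the paper's own proof, which just asserts that isotropy of the rank-two $\iota$ makes $R_3=\VV(\sigma,\bar s_1,\bar s_2)$ a hyperbolic reduction ``by definition''. Note also that isotropy of a rank-two subbundle would require $\sigma_{12}=0$ as well. But your repair does not work. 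Take $\Fc$ to be the first summand, with $\sigma_{11}=0$ and $\sigma_{12}$ invertible. Then $\Fc^\perp=\{x_1e_1+x_2e_2+y:\ \sigma_{12}x_2+\sigma_{13}(y)=0\}$, and $\Fc^\perp/\Fc\cong\wt\Ec_3$ via $y$ is fine. The induced form, however, is
\[
y\longmapsto \sigma_{33}(y,y)-\frac{2}{\sigma_{12}}\,\sigma_{13}(y)\,\sigma_{23}(y)+\frac{\sigma_{22}}{\sigma_{12}^{2}}\,\sigma_{13}(y)^{2}.
\]
The condition $\sigma_{22}=0$ removes only the last term. The cross term survives, so the reduction is not $\VV(\sigma_{33})=R_3$. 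To obtain $\sigma_{33}$ you need the isotropic line to be $\sigma$-orthogonal to the rank-three block, i.e.\ $\sigma_{13}=0$. In that case $\sigma_{22}$ is irrelevant and $\det\sigma=-\sigma_{12}^2\det\sigma_{33}$, as it should be. But $\sigma_{13}=0$ is a condition on a section of a rank-three bundle, not the codimension-two condition of Proposition~\ref{prop:isotropic_embedding_odd}. Without it, $\det\sigma/\det\sigma_{33}$ is in general a nonconstant Schur complement, so the discriminant check in your Step 3 would fail rather than confirm.

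\textbf{The obstruction is global, not just a missing condition.} $\sigma\in H^0(\Sym^2\Ec)$ is a quadratic form on $\Ec^\vee$. $R_3$ is its restriction to the subbundle $\wt\Ec_3^\vee=\ker(\Ec^\vee\to\Oc^{\oplus 2})$ cut out by $\bar s_1,\bar s_2$. So the vertex of a reduction that reproduces $R_3$ must be an isotropic line subbundle $\Lc\subset\Ec^\vee$ with $\Lc^\perp=\Lc\oplus\wt\Ec_3^\vee$. Your $\Fc=\Oc\subset\Ec$ lives in the wrong bundle. Moreover, the blocks $\sigma_{11},\sigma_{12},\sigma_{13}$ depend on a local choice of complement, since $0\to\Oc^{\oplus2}\to\Ec\to\wt\Ec_3\to0$ does not split ($\Ec$ has no trivial summand). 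Any such $\Lc$ would map isomorphically onto a subbundle of $\Oc^{\oplus2}$ on $\PP^2$. That forces $c_1(\Lc)^2=0$, hence $\Lc\simeq\Oc$, hence a nowhere vanishing section of $\Ec^\vee=\scrP\otimes\Omega^1_{\PP^2}(\zeta)\oplus\Oc(-\zeta)$. But this bundle has no nonzero sections. So no genericity assumption makes the linear section $R_3$ a hyperbolic reduction of $R_5$ in this way. Reaching $\Bc_5\simeq\Bc_3$ needs a genuinely different input, not a sharper version of Step 2.
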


\begin{proof}
    The image of a quadric $\scrQ$ containing the node, through the projection from the node, is given by the vanishing of a general $\sigma\in |2h|$. Similarly, the images of $s_1, s_2$ will be general elements of $|h|$: call them $\bar s_1, \bar s_2$. Denote by $\iota$ the embedding $\Oc^{\oplus 2}\arw \Ec$ defined by $\bar s_1, \bar s_2$. Then, by Proposition \ref{prop:isotropic_embedding_odd}, there is a codimension two space of quadrics $\scrH:=\scrH_\iota$ such that for every $\sigma\in \scrH_\iota$, the embedding $\iota$ is $\sigma$-isotropic. Let us now fix $R_5:= \VV(\sigma)\subset\PP(\Ec^\vee)$ and $R_3:= \VV(\sigma, \bar s_1, \bar s_2)\in\PP(\Ec^\vee)$ (or equivalently, $R_3 = \VV(\sigma)\subset\PP(\Ec_3^\vee)$, where the $\Ec_3$ is the cokernel of $\iota$). Then, since $\iota$ is $\sigma$-isotropic, $R_3$ is by definition a hyperbolic reduction of $R_5$. Let us now introduce $X_5:=\Grass\cap\scrQ$  and $X_3 := X_5\cap\VV(s_1, s_2)$. Then, by Theorem \ref{thm:kuznetsov_component_clifford_component}, one has $\wt K u(X_5) \simeq \dbcoh(\PP^2, \mathcal{C}_5)$ and $\wt{\Kc u}(X_3) \simeq \dbcoh(\PP^2, \mathcal{C}_3)$, where the right-hand sides are the derived categories of modules over the even part of the Clifford algebras associated, respectively, to $R_5$ and $R_3$. But since $R_3$ is a hyperbolic reduction of $R_5$, one has $\dbcoh(\PP^2, \mathcal{C}_5)\simeq \dbcoh(\PP^2, \mathcal{C}_3)$, and this concludes the proof.
\end{proof}

A similar result, relating GM varieties of dimension four and two, can be proven in exactly the same way, due to Proposition \ref{prop:isotropic_embedding_even}:

\begin{theorem}\label{thm:GM_duality_even} (Theorem \ref{main_theorem_KP_nodal} (2)).
    Choose three general hyperplane sections $s_1, s_2, s_3$ of $\Grass$ passing through the node $p$. Then, in the space of quadric hypersurfaces in $\PP(\wedge^2\scrV)$ which are nodal in $p$, there is a codimension two subspace $\scrH$ such that for the general $\scrQ\in\scrH$, the Gushel--Mukai fourfold $X_4:= \Grass\cap\scrQ\cap\VV(s_1)$ enjoys the following property: there is a smooth $K3$ surface $\wt X_2$ of degree two, which is the blowup of $X_2 = \VV(s_2, s_3)\cap X_4$ in $p$, such that $\dbcoh(\wt X_2)\simeq \wt{\Kc u}(X_4)$.
\end{theorem}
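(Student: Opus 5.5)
The proof follows that of Theorem \ref{thm:GM_duality_odd}, with the fourfold in the role of the fivefold and the surface in the role of the threefold. Project from the node. The quadric $\scrQ$ (nodal in $p$) becomes a section $\sigma\in|2h|$, and $s_1,s_2,s_3$ become general elements $\bar s_1,\bar s_2,\bar s_3\in|h|$, i.e. sections of $\Ec^\vee$. The first one, $\bar s_1$, gives the embedding $\Oc\hookrightarrow\Ec$ with cokernel $\wt\Ec_4$ as in Equation \ref{eq:sequences_E_tilda}, so that $R_4=\VV(\sigma)\subset\PP(\wt\Ec_4^\vee)$. The other two, $\bar s_2,\bar s_3$, induce a morphism $\iota:\Oc^{\oplus 2}\arw\wt\Ec_4$. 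For general choices this is an embedding of vector bundles: by the dimension count in Lemma \ref{lem:projective_bundle}, the degeneracy locus of a map from $\Oc^{\oplus 3}$ to the rank five bundle $\Ec$ on $\PP^2$ is empty. Its cokernel is the rank two bundle $\wt\Ec_2$ of the surface case.

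We then apply Proposition \ref{prop:isotropic_embedding_even} to $\iota$. This gives a codimension two subspace $\wt\scrH_\iota$ of sections of $\Sym^2\wt\Ec_4$ for which $\iota$ is $\sigma$-isotropic. Lifting along $H^0(\Sym^2\Ec)\arw H^0(\Sym^2\wt\Ec_4)$ (restriction to the hyperplane $\VV(\bar s_1)$) and pulling back to quadrics in $\PP(\wedge^2\scrV)$ nodal at $p$, we obtain the codimension two subspace $\scrH$. For $\sigma\in\scrH$, the quadric fibration $R_2=\VV(\sigma)\subset\PP(\wt\Ec_2^\vee)$, which is the same as $\VV(\sigma,\bar s_2,\bar s_3)\subset\PP(\wt\Ec_4^\vee)$, is by definition the hyperbolic reduction of $R_4$ with respect to $\iota$. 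By \cite{auel_bernardara_bolognesi_quadric_fibrations}, $\dbcoh(\PP^2,\Cc_4)\simeq\dbcoh(\PP^2,\Cc_2)$, and Theorem \ref{thm:kuznetsov_component_clifford_component} gives $\wt{\Kc u}(X_4)\simeq\dbcoh(\PP^2,\Cc_4)$.

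It remains to identify $\dbcoh(\PP^2,\Cc_2)$ with $\dbcoh(\wt X_2)$. Here I would use the remark on nodal GM surfaces. $R_2$ is a relative quadric of dimension zero, i.e. the double cover $\wt X_2\arw\PP^2$ branched along the sextic discriminant $\Delta$, and $\wt X_2=\Bl_pX_2$ is a smooth degree two $K3$ surface for general $\scrQ\in\scrH$. For a rank two quadric fibration, the even Clifford algebra is $\Cc_0=\Oc\oplus\det\wt\Ec_2^{\vee}\otimes\cdots$. This is a sheaf of commutative algebras, and its relative spectrum is exactly the double cover $R_2$. Hence $\dbcoh(\PP^2,\Cc_2)\simeq\dbcoh(\wt X_2)$ with trivial Brauer class; equivalently, this is the $m=0$ case of Equation \ref{eq:sod_quadroc_fibration_in_general}.

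The main obstacle is genericity inside $\scrH$, and I would check three things. First, that the general $\sigma$ in the codimension two space still defines $X_4$, $X_2$ with a single node at $p$. Second, that the general such $\sigma$ still has a smooth discriminant sextic, so that $\wt X_2$ is smooth and the equivalence is not just with a singular double cover. Third, that the flop construction of Section \ref{sec:birational_map} is still valid for these special quadrics. I would argue all three by a Bertini-type argument. The isotropy conditions only force $\sigma_{11}=\sigma_{22}=0$, which leaves the $\wt\Ec_2$-block and the mixed blocks free. Because $\wt\Ec_2$ is globally generated, the restricted linear system is still basepoint free enough to give smooth zero loci and a reduced, smooth discriminant. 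To confirm that $\Delta$ is smooth, I would exhibit one explicit example.
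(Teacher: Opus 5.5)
Your proposal is essentially the paper's proof, which just reruns the argument of Theorem~\ref{thm:GM_duality_odd} with Proposition~\ref{prop:isotropic_embedding_even} in place of Proposition~\ref{prop:isotropic_embedding_odd} (isotropy of $\iota$ for $\sigma\in\scrH$, $R_2$ as a hyperbolic reduction of $R_4$, the equivalence of Clifford categories from \cite{auel_bernardara_bolognesi_quadric_fibrations}, Theorem~\ref{thm:kuznetsov_component_clifford_component} for $X_4$, and the remark on nodal GM surfaces identifying $R_2$ with the double plane $\wt X_2$ branched along $\Delta$). What you add is left implicit in the paper: the degeneracy count making $\iota$ a subbundle, the identification of the rank-two even Clifford algebra with $\psi_*\Oc_{R_2}$ as the $m=0$ case of Equation~\ref{eq:sod_quadroc_fibration_in_general}, and the genericity checks inside $\scrH$ (which you only sketch, and the paper does not address at all); as in the paper, you take the hyperbolic-reduction step from Proposition~\ref{prop:isotropic_embedding_even} rather than checking it directly.
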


While rationality is known in the case of fivefolds, it is not obvious for fourfolds. However, choosing the quadric $\scrQ$ as in \ref{thm:GM_duality_even} forces $X_4$ to be rational:

\begin{corollary}
    Let $X_4$ be a nodal GM fourfold satisfying the assumptions of Theorem \ref{thm:GM_duality_even}. Then, $X_4$ is rational.
\end{corollary}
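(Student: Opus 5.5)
The strategy is to realize $X_4$, up to birational equivalence, as a quadric surface fibration over $\PP^2$ that has a rational section; rationality then follows at once. By Section \ref{sec:birational_map}, $X_4$ is birational to its blowup $\wt X_4$, and the relative Atiyah flop of Diagram \ref{eq_keqdiagram} makes $\wt X_4$ birational to $R_4$. By Lemma \ref{lem:quadric_fibration}, $\psi: R_4\arw\PP(\wedge^2\scrW)\simeq\PP^2$ is a quadric surface fibration. Concretely, $R_4=\VV(\sigma)\subset\PP(\wt\Ec_4^\vee)$ with $\sigma\in H^0(\PP(\wedge^2\scrW),\Sym^2\wt\Ec_4)$. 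It therefore suffices to show that $R_4$ is rational.

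The key input is the hypothesis of Theorem \ref{thm:GM_duality_even}. The quadric $\scrQ$ lies in $\scrH=\wt\scrH_\iota$ from Proposition \ref{prop:isotropic_embedding_even}, so the embedding $\iota:\Oc^{\oplus 2}\hookrightarrow\wt\Ec_4$ induced by $\bar s_2,\bar s_3$ is $\sigma$-isotropic. In particular, any trivial line subbundle $\Oc\hookrightarrow\iota(\Oc^{\oplus2})\subset\wt\Ec_4$ is isotropic, so it gives a section $B:=\PP(\Oc)\subset\PP(\wt\Ec_4^\vee)$ of the projective bundle lying inside $R_4$. This is a rational (indeed regular) section $\PP^2\arw R_4$ of $\psi$. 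In fact the whole $\PP^1$-subbundle $\PP(\Oc^{\oplus 2})\simeq\PP^2\times\PP^1$ lies in $R_4$, so every fibre contains a line.

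Rationality then follows from the standard argument. The generic fibre of $\psi$ is a smooth quadric surface over the function field $K=\CC(\PP^2)$, and it carries a $K$-rational point coming from $B$. A smooth quadric with a rational point is $K$-rational, via projection from that point. Hence $R_4$ is birational to $\PP^2_K\times_K\PP^2$, so to $\PP^2\times\PP^2$, and so is rational. Two checks are needed. First, the generic fibre must be smooth, which holds because the discriminant $\Delta$ is a curve (Lemma \ref{lem:quadric_fibration}). Second, the section $B$ must not lie entirely in the singular locus of the fibres, which is clear since the singular locus lies over $\Delta$.

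The main obstacle is making sure the genericity assumptions survive the restriction to $\wt\scrH_\iota$. Lemma \ref{lem:quadric_fibration} and the flop construction are stated for a general nodal quadric, so I need the general $\scrQ\in\scrH$ to still give a one-nodal $X_4$ with $R_4$ a flat quadric fibration whose discriminant is a curve. I would handle this by exhibiting a single $\sigma\in\wt\scrH_\iota$ with smooth generic fibre. For instance, take $\sigma$ hyperbolic on $\Oc^{\oplus2}\oplus(\text{a complement})$, which exists locally by a choice of splitting. Openness of these conditions then gives them for the general member. If this fails, the fallback is to use the line subbundle directly: projecting $R_4$ from the isotropic section $B$ fibrewise gives a birational map to the projective bundle $\PP((\Oc^\perp/\Oc)^\vee)$ over $\PP^2$, which is rational. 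This avoids any smoothness discussion beyond flatness.
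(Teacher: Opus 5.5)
Your argument is essentially the paper's: you pass from $X_4$ to $R_4$ through the blowup and the flop of Diagram~\ref{eq_keqdiagram}, use the isotropic line subbundle underlying the hyperbolic reduction to get a section of $\psi\colon R_4\arw\PP^2$, and conclude by projecting the generic fibre, a smooth quadric surface with a rational point, from that section, which is the paper's one-line proof spelled out. Two asides are wrong but unused. First, drop the claim that all of $\PP(\Oc^{\oplus 2})$ lies in $R_4$: a totally isotropic rank-two subbundle would make the discriminant a double cubic rather than the smooth sextic $\Delta$, and you only need one isotropic line. Second, in your fallback the projection from the section lands in the $\PP^2$-bundle of the rank-three quotient by the isotropic line, not in the $\PP^1$-bundle $\PP((\Oc^\perp/\Oc)^\vee)$.
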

\begin{proof}
    Since $R_4$ admits a hyperbolic reduction, a birational map to $\PP^4$ can be constructed by projecting from an isotropic section. Therefore, $X_4$ and $\wt X_4$ are also rational.
\end{proof}

% \begin{theorem}
%     Choose three general hyperplane sections $s_1, s_2, s_3$ of $\Grass$ passing through the node. Then, in the space of quadric hypersurfaces in $\PP(\scrV)$ passing through the node, there is a codimension two subspace $\scrH$ such that for every $\scrQ\in\scrH$, the Gushel--Mukai fourfold $X_4:= \Grass\cap\scrQ\cap\VV(s_1)$ enjoys the following property: there is a nodal Gushel--Mukai surface $X_2 = \VV(s_2, s_3)\cap X_4$ such that $\wt{\Kc u}(X_2)\simeq \wt{\Kc u}(X_4)$.
% \end{theorem}

%{
%\color{purple}
\subsection{The relationship with Verra threefolds and fourfolds}\label{section_relation_Verra}
A Verra threefold $V_3$ is a $(2,2)$-section of $\PP^2\times\PP^2$ (see \cite{verra2004prym}). A general smooth Verra threefold admits two conic fibrations over $\PP^2$ with discriminant divisor given by smooth sextic curves, and therefore it admits a semiorthogonal decomposition as follows:

\begin{equation*}
    \dbcoh(V_3) = \langle \Kc u(V_3), \Oc(-1), \Oc, \Oc(1) \rangle
\end{equation*}

where $\Kc u(V_3)$ is the ``Clifford component'' and $\Oc(1)$ is the hyperplane bundle of $\PP^2$. Similarly, Verra fourfolds are double covers of $\PP^2\times\PP^2$ branched over a Verra threefold and they also admit two quadric fibrations over $\PP^2$ branched along a sextic curve \cite{iliev_kapuastka_kapustka_ranestad_KummerHK}. Hence, for a Verra fourfold $V_4$, we can define $\Kc u(V_4)$ by its structure as a quadric fibration as for threefolds.\\
In this section, we illustrate how GM threefolds satisfying the assumptions of Theorem \ref{thm:GM_duality_odd} are related to Verra threefolds by the fact that the corresponding fibration $R^3$ is a hyperbolic reduction of a quadric fibration $W\arw\RR^2$, which also admits a hyperbolic reduction to a Verra threefold. The same result for fourfolds has been obtained by \cite{bini_kapustka2}. A natural consequence of these results is an equivalence of the corresponding Kuznetsov components.

% \begin{proposition}\label{prop:verra_threefolds_and_gm_threefolds}
%     Choose two general hyperplane sections $s_1, s_2$ of $\Grass$ passing through the node. Then, in the space of quadric hypersurfaces in $\PP(\scrV)$ passing through the node there is a codimension two subspace $\scrH$ such that for every $\scrQ\in\scrH$, the Gushel--Mukai threefold $X_3:= \Grass\cap\scrQ\cap\VV(s_1, s_2)$ enjoys the following properties:
%     \begin{enumerate}
%         \item There is a quadric fibration $W\arw\PP^2$, and a Verra threefold $V_3$, such that both $R_3$ and $V_3$ are hyperbolic reductions of $W$, where $R_3$ is the image of $\wt X_3$ through the flop of Diagram \ref{eq_keqdiagram}.
%         \item $\wt{\Kc u}(X_3)\simeq \Kc u(V_3)$.
%     \end{enumerate}
% \end{proposition}

\begin{proposition}\label{prop:verra_threefolds_and_gm_threefolds}
    Take a general,  nodal GM threefold $X_3$, and call $R_3$ the image of $X_3$ through the flop of Diagram \ref{eq_keqdiagram}. Then the following holds:
    \begin{enumerate}
        \item There is a quadric fibration $W\arw\PP^2$, and a Verra threefold $V_3$, such that both $R_3$ and $V_3$ are hyperbolic reductions of $W$.
        \item $\wt{\Kc u}(X_3)\simeq \Kc u(V_3)$.
    \end{enumerate}
\end{proposition}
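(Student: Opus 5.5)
We plan to construct $W$ as a quadric fibration of relative dimension three over $\PP(\wedge^2\scrW)\simeq\PP^2$, containing two isotropic line subbundles: one whose hyperbolic reduction is $R_3$ and one whose reduction is $V_3$ viewed through one of its conic fibrations. Recall from Lemma \ref{lem:projective_bundle} and Equation \ref{eq:sequences_E_tilda} that $R_3=\VV(\sigma)\subset\PP(\wt\Ec_3^\vee)$, where $\wt\Ec_3=\Ec/\Oc^{\oplus 2}$ has rank three and $c_1(\wt\Ec_3)=c_1(\Ec)=3\zeta$. A Verra threefold $V_3\subset\PP^2\times\PP^2$, projected to the first factor, is the conic fibration $\VV(\tau)\subset\PP(\Oc^{\oplus 3})\times\PP^2$ with $\tau\in H^0(\PP^2,\Sym^2(\Oc^{\oplus 3})\otimes\Oc(2))$; after twisting, this is a quadric form on $\Oc(1)^{\oplus 3}$, again of determinant degree $3\zeta$. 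This suggests taking $W\subset\PP(\Gc^\vee)$ for a rank-five bundle $\Gc$ sitting in two exact sequences
\begin{equation*}
0\arw\Lc_1\arw\Lc_1^\perp\arw\wt\Ec_3\arw 0,\qquad 0\arw\Lc_2\arw\Lc_2^\perp\arw\Oc(1)^{\oplus 3}\arw 0,
\end{equation*}
where $\Lc_i\subset\Gc$ are isotropic line subbundles and $\Gc/\Lc_i^\perp\simeq\Lc_i^\vee$ up to the twist of the form.

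Concretely, we would try $\Gc=\wt\Ec_3\oplus\Oc\oplus\Oc$, with the form on $\Gc$ given by $\sigma$ on $\wt\Ec_3$ plus the hyperbolic plane on $\Oc\oplus\Oc$, twisted suitably, so that $R_3$ is the hyperbolic reduction with respect to $\Lc_1=\Oc\oplus 0$. This is tautological. The real work is to find a second isotropic subbundle $\Lc_2$ of $\Gc$, or of a deformation of this form, such that $\Lc_2^\perp/\Lc_2\simeq\Oc(1)^{\oplus 3}$, possibly up to a twist by a line bundle. Equivalently, we would exhibit a "general" form on a rank-five bundle $\Gc$ that admits two such reductions. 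We would try to do this using the explicit description $\Ec=\scrP^\vee\otimes T(-\zeta)\oplus\Oc(\zeta)$, together with the Euler sequence $0\arw\Oc(-\zeta)\arw\scrW'\otimes\Oc\arw T(-\zeta)\arw 0$. These naturally produce trivial summands after extension, and they should allow the quadric on $\Ec$ to be rewritten with trivial-plus-$\Oc(1)$ pieces. An alternative, following \cite{bini_kapustka2}, is to work on the GM side. There one identifies $\PP^2\times\PP^2$ inside the geometry, namely $\PP(\scrP)\times\PP(\scrW)$ projected from the node, with $\Gamma$ living there. One then realizes $W$ as a quadric fibration over $\PP(\wedge^2\scrW)$ built from the two nested linear systems.

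The main obstacle is this construction of $W$ with two isotropic subbundles, together with the check that, for a general nodal $X_3$, the resulting $V_3$ is a smooth Verra threefold with the same discriminant sextic $\Delta$. Since hyperbolic reduction preserves the discriminant \cite{auel_bernardara_bolognesi_quadric_fibrations}, the discriminant check is automatic once $W$ exists. A dimension count should then confirm generality. The relevant moduli are sextic curves together with a choice of $2$-torsion theta-type data. Both families dominate this space, and the count should show that the general $R_3$ arises.

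Once (1) is established, (2) is formal. By Theorem \ref{thm:kuznetsov_component_clifford_component}, $\wt{\Kc u}(X_3)\simeq\dbcoh(\PP^2,\Cc_3)$, the even Clifford category of $R_3$. By \cite{auel_bernardara_bolognesi_quadric_fibrations}, hyperbolic reduction induces Morita equivalences of even Clifford algebras. This gives $\dbcoh(\PP^2,\Cc_3)\simeq\dbcoh(\PP^2,\Cc(W))\simeq\dbcoh(\PP^2,\Cc(V_3))$, and the last category is $\Kc u(V_3)$ by definition, via Equation \ref{eq:sod_quadroc_fibration_in_general} applied to the conic fibration of $V_3$.
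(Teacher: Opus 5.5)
Your deduction of (2) from (1) is exactly what the paper does: it goes through Theorem \ref{thm:kuznetsov_component_clifford_component} and the Morita invariance of even Clifford algebras under hyperbolic reduction \cite{auel_bernardara_bolognesi_quadric_fibrations}. The gap is in (1). Constructing $W$ is the whole content of the statement, and you leave it open.

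Worse, your ansatz cannot succeed. A rank-five bundle $\mathcal{G}$ with two isotropic \emph{line} subbundles reducing to $R_3$ and $V_3$ is ruled out by Chern classes. Let $\Lc\subset\mathcal{G}$ be isotropic for an $\Oc$-valued form, with $c_1(\Lc)=a\zeta$. Then $\mathcal{G}/\Lc^\perp\simeq\Lc^\vee$ gives $c(\Lc^\perp/\Lc)=c(\mathcal{G})(1-a^2\zeta^2)^{-1}$. So $c_1$ is preserved and $c_2$ increases by $a^2$.

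The $\PP^2$-bundle containing a conic bundle is determined by it (via $\psi_*\omega_{R/\PP^2}^{-1}$). Hence the rank-three bundle is determined up to a twist by some $\mathcal{M}$. Keeping the form $\Oc$-valued forces $\mathcal{M}^{\otimes 2}\simeq\Oc$, hence $\mathcal{M}\simeq\Oc$ on $\PP^2$. Passing to forms valued in an odd-degree line bundle does not help, because $R_3$ and $V_3$ both have even class $2h$ in their $\PP^2$-bundles.

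Now compare the two bundles:
\begin{itemize}
\item $c(\wt\Ec_3)=(1+\zeta+\zeta^2)^2(1+\zeta)=1+3\zeta+5\zeta^2$;
\item $c(\Oc(\zeta)^{\oplus 3})=1+3\zeta+3\zeta^2$.
\end{itemize}
Two line reductions of the same $\mathcal{G}$ would therefore need $a_1^2-a_2^2=2$, which is impossible modulo $4$. So no quadric fibration of relative dimension three has both $R_3$ and $V_3$ as hyperbolic reductions. This holds however you deform the tautological form on $\wt\Ec_3\oplus\Oc\oplus\Oc$, and no moduli count can repair it. Separately, $\PP(\scrP)\times\PP(\scrW)$ is $\PP^1\times\PP^2$, not $\PP^2\times\PP^2$, so your GM-side hint does not lead to the Verra threefold either.

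The missing idea is to go up to relative dimension five and reduce along isotropic subbundles $\Ac$ of rank two. There the shift in $c_2$ is $c_1(\Ac)^2-2c_2(\Ac)$, which equals $2$ for $\Ac\simeq\Oc(\pm\zeta)^{\oplus 2}$ and $0$ for $\Ac$ trivial. This is what the paper does:
\begin{enumerate}
\item Combining the definition of $\wt\Ec_3$ with the Euler sequence and the snake lemma gives the split resolution $0\arw\Oc(-\zeta)^{\oplus 2}\arw\Oc^{\oplus 4}\oplus\Oc(\zeta)\arw\wt\Ec_3\arw 0$.
\item The obstruction to lifting $\sigma$ through this resolution is controlled by $H^2(\PP^2,\Oc(-2\zeta))=0$. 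So \cite[Proposition 3.6]{bini_kapustka2} produces $W\subset\PP(\Oc^{\oplus 4}\oplus\Oc(-\zeta)^{\oplus 3})$ whose reduction along $\Oc(-\zeta)^{\oplus 2}$ is $R_3$.
\item The restriction of the form to the trivial summand $\Oc^{\oplus 4}$ is a constant symmetric $4\times 4$ matrix, nondegenerate for general data. It therefore contains a trivial isotropic $\Oc^{\oplus 2}$, whose reduction is $\Oc(-\zeta)^{\oplus 3}$.
\item The corresponding conic bundle is a $(2,2)$-divisor in $\PP(\Oc(-\zeta)^{\oplus 3})\simeq\PP^2\times\PP^2$, i.e. the Verra threefold \cite[Lemma 3.7]{bini_kapustka2}.
\end{enumerate}
This works directly for the given $R_3$, so no dimension count or dominance argument is needed.
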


\begin{proof}
    Let us recall that $R_3$ is the vanishing locus of a section of $\Oc(2h)$ on the projectivization of $\wt\Ec_3^\vee$, where $\wt\Ec_3 = T_{\PP^2}(-\zeta)^{\oplus 2}\oplus\Oc(\zeta)/\Oc^{\oplus 2}$ (see Section \ref{sec:duality_nodal_GM}).\\
    In light of \cite[Proposition 3.6]{bini_kapustka2}, let us construct a short exact sequence which resolves $\wt\Ec_3$ by split bundles. One has the following diagram:
    \begin{equation*}
        \begin{tikzcd}
            0\ar{d}\ar{r} & \Oc(-\zeta)^{\oplus 2}\ar[hook]{d} & \\
            \Oc^{\oplus 2}\ar[hook]{r}\ar[equals]{d} & \Oc^{\oplus 6}\oplus\Oc(\zeta) \ar[two heads]{r}\ar[two heads]{d} & \Oc^{\oplus 4}\oplus\Oc(\zeta) \\
            \Oc^{\oplus 2}\ar[hook]{r} & T_{\PP^2}(-\zeta)^{\oplus 2}\oplus\Oc(\zeta) \ar[two heads]{r} & \wt\Ec_3
        \end{tikzcd}
    \end{equation*}
    where the middle vertical sequence follows from the Euler sequence of $\PP^2$. Then, by the snake lemma applied to the two leftmost vertical sequences, we get the desired split resolution:
    \begin{equation*}
        0\arw \Oc(-\zeta)^{\oplus 2}\arw \Oc^{\oplus 4}\oplus\Oc(\zeta) \arw \wt\Ec_3\arw 0.
    \end{equation*}
    Since $H^2(\PP^2, \Oc(-2\zeta)) = 0$, we can apply \cite[Proposition 3.6]{bini_kapustka2} to deduce the following: in the projective bundle $\PP(\Oc^{\oplus 4}\oplus\Oc(-\zeta)^{\oplus 3})$, there is a quadric fibration $W$ of relative dimension five, such that $R_3$ is a hyperbolic reduction of $W$. Then, by \cite[Lemma 3.7]{bini_kapustka2}, $W$ admits a hyperbolic reduction to a conic bundle in $\PP(\Oc(-\zeta)^{\oplus 3})$, which is exactly the Verra threefold $V_3$.
\end{proof}

\begin{corollary}\label{cor:verra_derived_cats}(Proposition \ref{prop_relation_Verra})
    Let $X_n$ be a one-nodal Gushel--Mukai variety of dimension three or four. Then there is a Verra variety $V_n$ of the same dimension such that $\wt{\Kc u}(X_n)\simeq \Kc u(V_n)$.
\end{corollary}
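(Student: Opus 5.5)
The plan is to handle the two dimensions separately, using Theorem \ref{thm:kuznetsov_component_clifford_component} as the common bridge. It identifies $\wt{\Kc u}(X_n)$ with $\dbcoh(\PP^2,\Cc_n)$, the even Clifford component of the quadric fibration $R_n$ produced by the flop of Diagram \ref{eq_keqdiagram}. It therefore suffices to find a Verra $n$-fold $V_n$ whose quadric fibration over $\PP^2$ is hyperbolic equivalent to $R_n$. Here hyperbolic equivalent means the two fibrations are connected by a chain of hyperbolic reductions, possibly passing through an auxiliary fibration. We then invoke the result of \cite{auel_bernardara_bolognesi_quadric_fibrations} that hyperbolic reduction in families preserves the even Clifford algebra up to Morita equivalence, hence preserves $\dbcoh(\PP^2,\Cc)$. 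Finally, $\Kc u(V_n)$ is by definition the Clifford component of a quadric fibration structure of $V_n$ over $\PP^2$, as recalled at the start of Section \ref{section_relation_Verra}.

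For $n=3$ we apply Proposition \ref{prop:verra_threefolds_and_gm_threefolds}. It gives a quadric fibration $W\to\PP^2$ of relative dimension five such that both $R_3$ and a Verra conic bundle $V_3\subset\PP(\Oc(-\zeta)^{\oplus 3})$ are hyperbolic reductions of $W$. Applying \cite{auel_bernardara_bolognesi_quadric_fibrations} twice gives
\[
\dbcoh(\PP^2,\Cc_3)\simeq\dbcoh(\PP^2,\Cc_W)\simeq\dbcoh(\PP^2,\Cc_{V_3}).
\]
The last term is $\Kc u(V_3)$, because the decomposition $\langle\Kc u(V_3),\Oc(-1),\Oc,\Oc(1)\rangle$ is exactly \eqref{eq:sod_quadroc_fibration_in_general} for the conic bundle $V_3\to\PP^2$ with $m=1$. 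One must check that the pullback of the hyperplane class of $\PP^2$ matches under this identification, but twisting the Clifford component by a line bundle is harmless.

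For $n=4$ the structure is the same. The quadric surface fibration $R_4\subset\PP(\wt\Ec_4^\vee)$ is resolved by split bundles, just as in the proof of Proposition \ref{prop:verra_threefolds_and_gm_threefolds}. The snake-lemma argument with the Euler sequence gives
\[
0\arw\Oc(-\zeta)^{\oplus 2}\arw\Oc^{\oplus 5}\oplus\Oc(\zeta)\arw\wt\Ec_4\arw 0.
\]
We then use \cite[Corollary 4.13]{bini_kapustka2}, via \cite[Proposition 3.6, Lemma 3.7]{bini_kapustka2}, to obtain hyperbolic equivalence of $R_4$ with one of the two quadric surface fibrations of a Verra fourfold $V_4$; the vanishing needed is again $H^2(\PP^2,\Oc(-2\zeta))=0$. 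Morita invariance then gives $\wt{\Kc u}(X_4)\simeq\dbcoh(\PP^2,\Cc_4)\simeq\Kc u(V_4)$.

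The main obstacle is the compatibility check in the $n=4$ case, namely showing that the quadric fibration produced by the Bini--Kapustka reduction really is a Verra fourfold's fibration. This amounts to verifying that the reduction lands in a projective bundle of the shape $\PP(\Oc(-\zeta)^{\oplus 3}\oplus\Oc(\cdot))$ realizing the double cover of $\PP^2\times\PP^2$, with discriminant the same sextic $\Delta$. Discriminant preservation under hyperbolic reduction handles the latter part. For the former, I would first match ranks and twists in the split resolution directly against \cite[Corollary 4.13]{bini_kapustka2}.
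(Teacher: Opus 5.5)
Your proposal is correct and follows the paper's own argument: you pass to the Clifford component of $R_n$ via Theorem \ref{thm:kuznetsov_component_clifford_component}, obtain hyperbolic equivalence with a Verra quadric fibration from Proposition \ref{prop:verra_threefolds_and_gm_threefolds} for $n=3$ and from \cite[Corollary 4.13]{bini_kapustka2} for $n=4$, and conclude by the Morita invariance of \cite{auel_bernardara_bolognesi_quadric_fibrations}. Your split resolution $0\arw\Oc(-\zeta)^{\oplus 2}\arw\Oc^{\oplus 5}\oplus\Oc(\zeta)\arw\wt\Ec_4\arw 0$ is consistent with the threefold computation, but the paper does not rederive the fourfold case and simply cites Bini--Kapustka for it.
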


\begin{proof}
    This is a direct consequence of Proposition \ref{prop:verra_threefolds_and_gm_threefolds} and \cite{auel_bernardara_bolognesi_quadric_fibrations} for threefolds, and \cite[Corollary 4.13]{bini_kapustka2} and \cite{auel_bernardara_bolognesi_quadric_fibrations} for fourfolds.
\end{proof}
%}

\begin{remark}\label{rem:Verra_analogue_rationality_conjecture}
As the geometric resolution of a rational nodal Gushel-Mukai fourfold considered in Theorem~\ref{thm:GM_duality_even} is birationally equivalent to a Verra fourfold $V$, thus $V$ is rational. 
As a result, Theorem~\ref{thm:GM_duality_even} and Corollary~\ref{cor:verra_derived_cats} imply that the Kuznetsov component ${\Kc u}(V)\simeq D^b(\wt{X_2})$. This in turn, confirms an instance of the Verra analogue of Kuznetsov's rationality conjecture, which predicts that a Verra fourfold is rational if and only if the Kuznetsov component is derived equivalent to an actual $K3$ surface. 
\end{remark}

\section{Categorical Torelli theorem for nodal Gushel-Mukai threefolds}\label{section_Torelli_nodalGM}
Torelli-type problems are a developing area that has attracted a lot of attention from the algebraic geometry community, and broadly consist in understanding the roles of Hodge strucrtures and derived categories as invariants on several classes of varieties. While many counterexamples for several formulations of this problem have been found for Calabi--Yau manifolds \cite{ottemrennemo, borisovcaldararuperry, imouG2, ourpaper_cy3s, mypaper_torelli}, Fano varieties behave very rigidly from the point of view of derived categories. Hence, the usual formulation of the \emph{categorical Torelli theorem} for smooth Fano varieties addresses the problem of whether the Kuznetsov component of a Fano variety, instead of the whole derived category, characterizes its (birational) isomorphism class. In many cases, including smooth del Pezzo threefolds, Gushel-Mukai varieties, Enriques surfaces, hypersurfaces in (weighted) projective spaces, the answer has been shown to be affirmative \cite{macri:categorical-invarinat-cubic-threefolds, jacovskis2021categorical, lin2025serre, li2021refined, pirozhkov2024categorical, FLZ2024categorical}. Recently, the study of categorical Torelli problems has been extended to \emph{singular} Fano varieties. In particular, in \cite{FLZ2026categorical}, the authors show the categorical data of a smooth subcategory inside the Kuznetsov component of a 1-nodal non-factorial Fano threefolds characterizes its isomorphism class. In the present work we address nodal \emph{factorial} Fano threefolds: here, instead of using the smooth subcategory, which is smaller than the Kuznetsov component, we use the larger one, i.e. the \emph{Categorical resolution of singularities}. 

For a 1-nodal cubic threefold $Y_3$, a categorical resolution $\wt{\Kc u}(Y_3)$ contains a semi-orthogonal component as the derived category $D^b(\Gamma_4)$ of a smooth curve $\Gamma_4$ of genus four in $\mathbb{P}^3$, and the complement is an exceptional object(See \cite[Proposition 1.33]{liu2026second}). Note that the geometric resolution $\widetilde{Y_3}\cong\mathrm{Bl}_{\Gamma_4}\mathbb{P}^3$. This means that $\wt{\Kc u}(Y_3)$ characterizes the isomorphism class of the 1-nodal cubic threefold $Y_3$ by \cite[Section 5.1, Remark 5.1]{huybrechts2023geometry}. In the case of 1-nodal Gushel-Mukai threefolds $X_3$ considered in our article, as explained in Theorem~\ref{thm:kuznetsov_component_clifford_component}, a categorical resolution $\wt{\Kc u}(X_3)$ is given by the category $\mathcal{B}_3=D^b(\mathbb{P}^2,\mathcal{C}_3)$. As a corollary, we show $\wt{\Kc u}(X_3)$ determines the birational class of $X_3$. More precisely, 
\begin{theorem}\label{thm:Torelli_nodal_GM} (Theorem \ref{main_theorem_Torelli}).
Let $X_3, X_3'$ be \emph{general} 1-nodal Gushel-Mukai threefolds, assume that there is an equivalence $\Phi:\wt{\Kc u}(X_3)\simeq\wt{\Kc u}(X_3')$, then $X_3\sim X_3'.$ 
\end{theorem}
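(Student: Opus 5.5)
By Theorem \ref{thm:kuznetsov_component_clifford_component}, the equivalence $\Phi$ becomes an equivalence
\[
\Bc_3=\dbcoh(\PP^2,\Cc_3)\simeq \dbcoh(\PP^2,\Cc_3')=\Bc_3',
\]
where $\Cc_3,\Cc_3'$ are the even Clifford algebras of the conic bundles $R_3\arw\PP^2$ and $R_3'\arw\PP^2$ obtained from the flop of Diagram \ref{eq_keqdiagram}. Since $\mu$ is birational and $\wt X_3\arw X_3$ is the blowup of the node, we have $X_3\sim\wt X_3\sim R_3$, and likewise for $X_3'$. It therefore suffices to show that the conic bundles $R_3$ and $R_3'$ are birational.

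The plan is to reduce this to a Torelli-type statement for conic bundles over $\PP^2$ with smooth sextic discriminant. First, by Lemma \ref{lem:quadric_fibration}, the discriminant $\Delta\subset\PP^2$ is a smooth sextic, so $\Cc_3$ corresponds, via the standard dictionary of \cite{kuznetsov_quadric_fibrations_intersection_quadrics}, to the étale double cover $\wt\Delta\arw\Delta$ together with the trivial Brauer-type data on $\PP^2$. I would then recover $(\Delta,\wt\Delta)$ categorically from $\Bc_3$. The route I would try first is through the Serre functor. On $\Bc_3$ it is $-\otimes_{\Cc_3}\Cc_1\otimes\omega_{\PP^2}[2]$, with $\Cc_1$ the odd part, so $\Bc_3$ is "fractional Calabi--Yau away from $\Delta$". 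Concretely, points of $\PP^2\setminus\Delta$ give objects $\Oc_x\otimes$(spinor modules) whose behaviour under the Serre functor differs from that of points on $\Delta$. This should identify the centre $\PP^2$ and the locus $\Delta$ as invariants of the category.

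Concretely, I would realise $\PP^2$ as a moduli space of "point objects" in $\Bc_3$, namely the simple $\Cc_3$-modules supported at points. For $x\notin\Delta$ these form a pair of objects, and over $x\in\Delta$ a single one. This fibre structure recovers the double cover $\wt\Delta\arw\Delta$, and $\Phi$ must preserve it provided one can show that $\Phi$ sends point objects to point objects, up to shift and twist. Here the generality assumption enters: for general $X_3$ I expect $\Bc_3$ to have no exceptional autoequivalences beyond those induced by automorphisms, twists by $\Oc(\zeta)$ and shifts. An alternative, cleaner route is to use Hochschild homology or the numerical Grothendieck group to see that $\Phi$ induces an isomorphism of intermediate Jacobians, $J(X_3)$ or the Prym of $(\Delta,\wt\Delta)$, compatible with polarisations. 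I would then invoke the Prym–Torelli theorem for double covers of smooth plane sextics, which holds generically, to recover $(\Delta,\wt\Delta)$.

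Once $(\Delta,\wt\Delta)$ is identified up to a projective linear change of coordinates, I would conclude by the classical fact that a standard conic bundle over $\PP^2$ is determined up to birational equivalence over the base by its discriminant cover, equivalently by the class in $\mathrm{Br}(\CC(\PP^2))[2]$ determined by $\wt\Delta\arw\Delta$ through the Artin–Mumford sequence. This gives $R_3\sim R_3'$ and hence $X_3\sim X_3'$. The main obstacle is the middle step: showing that $\Phi$ respects the structure over $\PP^2$, or at least recovers the Prym variety with its principal polarisation. I would first attempt the Hodge-theoretic route, using that the Prym is the only principally polarised abelian variety attached to $HH_{-1}(\Bc_3)$ together with the Euler pairing, and use generality to avoid the Prym–Torelli non-injectivity locus.
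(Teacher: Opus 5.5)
Your committed (Hodge-theoretic) route is the paper's: from $\Phi$ get $J(R_3)\cong J(R_3')$ as principally polarized abelian varieties, identify both with Pryms of the discriminant covers, apply Prym--Torelli, and finish with the Brauer-class argument, which the paper takes from \cite[Proposition 3.10]{prokhorov2015rationality}.

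The first gap is the middle step: the invariants you name cannot produce the intermediate Jacobian. $HH_{-1}(\Bc_3)$ is just a complex vector space; even with its Mukai pairing it carries no integral lattice. The numerical Grothendieck group sees no odd-degree cohomology at all. What is needed is Perry's abstract intermediate Jacobian, built from $K_1^{\mathrm{top}}$ with its weight-one Hodge structure and Euler pairing \cite[Proposition 5.23]{perry2020integral}. Transporting it along $\Phi$ requires $\Phi$ to be of Fourier--Mukai type. The paper gets this by rewriting $\Bc_3$ as twisted sheaves on the root stack $\sqrt{\PP^2,\Delta}$ and invoking \cite{canonaco2007fourier, peng2025orlov}. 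To identify $J(\Bc_3)$ with $J(R_3)$, you use $\dbcoh(R_3)=\langle\Bc_3,\psi^*\dbcoh(\PP^2)\rangle$, whose second piece has no $K_1^{\mathrm{top}}$, together with $H^1(R_3,\Oc_{R_3})=0$.

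The second gap is that your Prym--Torelli step fails as stated. The restriction of the Prym map to \'etale double covers of smooth plane sextics is not generically injective, so no generality assumption keeps you off the non-injectivity locus. By \cite{verra2004prym} the Prym map has degree two there. The two covers in a general fibre are the discriminant covers of the two conic bundle structures of a single Verra threefold. By Proposition \ref{prop:verra_threefolds_and_gm_threefolds}, your $(\Delta,\wt\Delta)$ is exactly the cover of one projection of a Verra threefold $V_3$: hyperbolic equivalence preserves the even Clifford algebra up to Morita equivalence, hence the discriminant cover. So $\mathrm{Prym}(\wt\Delta/\Delta)\cong\mathrm{Prym}(\wt\Delta'/\Delta')$ only says that $(\Delta',\wt\Delta')$ is isomorphic either to $(\Delta,\wt\Delta)$ or to the cover of the other projection of $V_3$.

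The conclusion still survives. In the second case your Brauer-class argument makes $R_3'$ birational to $V_3$ with its other conic bundle structure, so $R_3\sim V_3\sim R_3'$. Alternatively, apply Iliev's Torelli theorem \cite{iliev1997theta} to $J(V_3)\cong J(V_3')$ as in Remark \ref{rem_using_Verra_threefold}, or use the spinor-modification argument of the paper's second proof. The paper's first proof asserts the same isomorphism of covers, so on this point you are following it, but the step needs this repair.

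Finally, your point-object route has the fibre structure reversed. For conics, $\Cc_3$ is fibrewise $M_2(\CC)$ off $\Delta$, giving one point object. Over each point of $\Delta$ it has two simple modules, one for each line of the singular conic, which is how $\wt\Delta$ appears.
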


\begin{lemma}\label{lem:fisrt_cohomology_quadric fibration}
Let $R_3$ be the quadric fibration over $\mathbb{P}^2$ of relative dimension one, i.e. a conic bundle. Then $\mathrm{H}^1(R_3,\oh_{R_3})=0$. 
\end{lemma}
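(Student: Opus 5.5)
The quickest route is to push the computation down to the base $\PP^2$ via the conic bundle $\psi: R_3\arw \PP(\wedge^2\scrW)\simeq\PP^2$ and the Leray spectral sequence. By Lemma \ref{lem:quadric_fibration}, $R_3$ is a divisor of relative degree two in the projective bundle $\wt Z=\PP(\wt\Ec_3^\vee)$, where $\wt\Ec_3$ has rank three. Hence there is an exact sequence
\begin{equation*}
0\arw \Oc_{\wt Z}(-2h)\arw \Oc_{\wt Z}\arw \Oc_{R_3}\arw 0 .
\end{equation*}
Here $\Oc(h)$ is the relative hyperplane bundle of $\wt Z\arw\PP^2$, whose fibers are $\PP^2$. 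We apply the pushforward along $\eta:\wt Z\arw\PP^2$. Since $\eta_*\Oc_{\wt Z}=\Oc_{\PP^2}$ with vanishing higher direct images, and $R^\bullet\eta_*\Oc(-2h)=0$ (because $\Oc(-2)$ is acyclic on $\PP^2$), we obtain $R\psi_*\Oc_{R_3}\simeq\Oc_{\PP^2}$. Therefore $H^1(R_3,\Oc_{R_3})\simeq H^1(\PP^2,\Oc_{\PP^2})=0$.

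An alternative that avoids the fibration is to use the flop. By Diagram \ref{eq_keqdiagram}, $R_3$ is birational to $\wt X_3$, and both are smooth projective, so $h^{1,0}$ is a birational invariant. It would then suffice to show $H^1(\wt X_3,\Oc)=0$, and this holds since $\wt X_3$ is a blowup of $X_3$ with rational singularities, together with the vanishing for Gushel--Mukai threefolds (or directly via the Koszul computation in Proposition \ref{prop:SOD_of_the_resolved_GM}, which shows $\Oc_{\wt X_3}$ is exceptional). We would present the first argument and mention the second as a cross-check.

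The only point requiring care is the identification of $R_3$ as the zero locus of a section of $\Oc(2h)$ on an honest $\PP^2$-bundle over $\PP^2$. This uses that the degeneracy locus of $\nu$ is empty, which is the content of Lemma \ref{lem:projective_bundle}, together with the fact that $\Oc(h)$ restricts to $\Oc(1)$ on fibers. Once this is granted, the remaining steps are routine relative cohomology vanishings on projective bundles.
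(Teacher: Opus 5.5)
Your argument is correct and is essentially the paper's proof. Both use the sequence $0\arw\Oc(-2h)\arw\Oc\arw\Oc_{R_3}\arw 0$ on the $\PP^2$-bundle $\PP(\wt\Ec_3^\vee)$ and the acyclicity of $\Oc(-2h)$ there; you push forward to get $R\psi_*\Oc_{R_3}\simeq\Oc_{\PP^2}$, while the paper reads $H^1(R_3,\Oc_{R_3})\simeq H^2(\PP(\wt\Ec_3^\vee),\Oc(-2h))$ off the long exact sequence, which tacitly uses the vanishing of $H^{1}$ and $H^2$ of $\Oc$ on the total space that your version makes explicit.
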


\begin{proof}
    Recall that $R_3$ is the zero locus of a section of $\Oc(2h)$ on $\PP(\wt\Ec_3^\vee)$. Therefore, we have a short exact sequence on $\PP(\wt\Ec_3^\vee)$:
    \begin{equation*}
       0\arw\Oc(-2h) \arw\Oc \arw\Oc_{R_3}\arw 0
    \end{equation*}
    from which we deduce that $H^1(
    R_3, \Oc_{R_3}) \simeq H^2(\PP(\wt\Ec_3^\vee), \Oc(-2h))$. We conclude once we see that $\PP(\wt\Ec_3^\vee)$ is a $\PP^2$-bundle and $h$ is a relative hyperplane class, and therefore $\Oc(-2h)$ has no cohomology.
\end{proof}

\begin{proof}[Proof of Theorem~\ref{thm:Torelli_nodal_GM}]
By Theorem~\ref{theorem_main_first} above, there is a Fourier-Mukai equivalence $\wt{\Kc u}(X_3)\simeq$ $D^b(\mathbb{P}^2,\mathcal{C}_3)$, where the right-hand side is a Clifford component of a conic bundle $R_3$ over $\mathbb{P}^2$. Denote by $C$ and $C'$ the discriminant locus of the conic bundle $R_3$ and $R_3'$, respectively. Then by \cite[Section 3.6, Proposition 3.15]{kuznetsov2008derived}, such a component is equivalent to the derived category $D^b(\sqrt{\mathbb{P}^2, C},\alpha)$ of twisted sheaves on the root stack of $\mathbb{P}^2$. Then $\Phi$ becomes an equivalence $\Psi:D^b(\sqrt{\mathbb{P}^2,C'},\alpha)\simeq D^b(\sqrt{\mathbb{P}^2,C},\alpha').$ Combining the argument in \cite[Section 4.1, Lemma 2.1, Lemma 2.5]{canonaco2007fourier} and \cite{peng2025orlov}, such an equivalence is given by a Fourier-Mukai functor\footnote{It is pointed out by Fei Peng (private communications)}. Note that the semiorthogonal decomposition of $R_3$ is given by 
$$D^b(R_3)=\langle D^b(\mathbb{P}^2,\mathcal{C}_3),q^*D^b(\mathbb{P}^2)\rangle,$$ where $q:R_3\rightarrow\mathbb{P}^2$ is the quadric bundle map. As $\mathrm{H}^1(R_3,\oh_{R_3})=0$ by Lemma~\ref{lem:fisrt_cohomology_quadric fibration}. Then we apply the abstract intermediate Jacobian construction defined in \cite[Proposition 5.23]{perry2020integral}, we have a chain of isomorphisms:
$$J(R_3)\cong\mathrm{J}(D^b(\mathbb{P}^2,\mathcal{C}_3))\cong\mathrm{J}(D^b(\mathbb{P}^2,\mathcal{C}_3'))\cong J(R_3')$$ as p.p.a.v. %\textcolor{purple}{the chain of isomorphisms used to be inline, and it went off margin}

Denote by $\Gamma_6,\Gamma_6'$ the discriminant locus of quadric bundle maps $q:R_3\rightarrow\mathbb{P}^2$ and $R_3'\rightarrow\mathbb{P}^2$, respectively. Then by \cite[Theorem 7.6]{prokhorov2015rationality}, $J(R_3)\cong\mathrm{Prym}(\widetilde{\Gamma_6},\Gamma_6)$ and $J(R_3')\cong\mathrm{Prym}(\widetilde{\Gamma_6'},\Gamma_6')$. Then $\mathrm{Prym}(\widetilde{\Gamma_6},\Gamma_6)\cong\mathrm{Prym}(\widetilde{\Gamma_6'},\Gamma_6')$ as p.p.a.v. Note that $\Gamma_6$ and $\Gamma_6'$ are smooth sextic curve in $\mathbb{P}^2$, the genus $g(\Gamma_6)=g(\Gamma_6')=10$. Then by Prym-Torelli Theorem\cite[Theorem 1.1(1)]{ikeda2019global},  we have $\widetilde{\Gamma_6}/\Gamma_6\cong\widetilde{\Gamma_6'}/\Gamma_6'$ as double covers. This in turn implies that $R_3$ is birationally equivalent to $R_3'$ by \cite[Proposition 3.10]{prokhorov2015rationality}. By the birational equivalences $R_3\sim\wt{X_3}$ and $R_3'\sim\wt{X_3'}$, we get $X_3\sim X_3'$. 
\end{proof}

\begin{remark}\label{rem_using_Verra_threefold}
We sketch an alternate argument by observing that $\wt{\Kc u}(X_3)\simeq{\Kc u}(V_3)$ for some Verra threefold as proved in Corollary~\ref{cor:verra_derived_cats}. The equivalence $\wt{\Kc u}(X_3)\simeq\wt{\Kc u}(X_3')$ implies that ${\Kc u}(V_3)\simeq{\Kc u}(V_3')$ and Verra threefolds $V_3$ and $V_3'$ are birationally equivalent to $X_3$ and $X_3'$, respectively. As a Verra threefold is Fano, apply the abstract intermediate Jacobian construction defined in \cite[Proposition 5.23]{perry2020integral}, we get $J(V_3)\cong J(V_3')$ as p.p.a.v. Then by Torelli theorem for Verra threefolds proved in \cite[Theorem 5.6]{iliev1997theta}, we get $V\cong V'$ and then $X_3\sim X_3'$. 
\end{remark}

\section{Spinor modifications}
\subsection{Another birational model for the resolution of nodal Gushel--Mukai threefolds}
In this section, using the technique established in \cite{Kuznetsov2025Spinor}, we will construct a conic bundle $U$ which is a spinor modification of $R_3$. In particular, the Clifford component of its derived category will be equivalent to the one of $R_3$, providing a new birational model of $\wt{X_3}$. 

\begin{lemma}\label{lem:abstract_spinor_bundle}
    Consider a rational curve $C\subset R_3$ which is contracted by the small contraction $R_3\arw T_3$. Then there is a nontrivial extension:
    \begin{equation*}
        0\arw O(-h+2\zeta) \arw \Fc \arw \Ic_{C|R_3}\arw 0.
    \end{equation*}
    Moreover, $\Fc$ is an abstract spinor bundle in the sense of \cite{Kuznetsov2025Spinor}.
\end{lemma}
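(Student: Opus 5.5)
The plan has two steps: first build the extension by a Serre-type (Hartshorne) construction, then check Kuznetsov's defining properties of an abstract spinor bundle.

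\emph{Step 1: the extension.} Extensions of $\Ic_{C|R_3}$ by a line bundle $\Lc$ are classified by $\Ext^1(\Ic_{C|R_3},\Lc)$. Since $C$ is a locally complete intersection curve of codimension two in the smooth threefold $R_3$, Serre duality and the local-to-global spectral sequence give
\begin{equation*}
    \Ext^1(\Ic_{C|R_3},\Lc)\supset H^0(C,\det\Nc_{C|R_3}\otimes\Lc^{-1}\otimes\omega_{R_3}|_C\otimes\omega_{R_3}^{-1}|_C),
\end{equation*}
that is, $\sHom$-level data $\mathcal{E}xt^1(\Ic_C,\Lc)\simeq\det\Nc_C\otimes\Lc|_C$. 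The obstruction to lifting a section of this sheaf to a global extension lives in $H^2(R_3,\Lc)$. By the Hartshorne--Serre correspondence, the extension sheaf $\Fc$ is locally free exactly when the chosen section generates $\det\Nc_{C}\otimes\Lc|_C$, i.e. when this line bundle on $C\simeq\PP^1$ is trivial.

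I would compute these degrees using Lemma \ref{lem:quadric_fibration}. Since $C$ is a flopping curve, $K_{R_3}\cdot C=0$ (recall $\omega_{R_3}=\Oc(-h)$), so $h\cdot C=0$. Hence $C$ is a fibre of $S_R\arw\Gamma$ and $\Nc_{C|R_3}\simeq\Oc(-1)^{\oplus 2}$, so $\det\Nc_C=\Oc(-2)$. Moreover $\zeta\cdot C=2h\cdot C-H\cdot C=-H\cdot C$, and since $\rho$ contracts $C$ we may take $H\cdot C$ as computed on the blown-up side, which should give $\zeta\cdot C=1$. With $\Lc=\Oc(-h+2\zeta)$ this yields $\Lc|_C=\Oc(2)$, so $\det\Nc_C\otimes\Lc|_C\simeq\Oc_C$, which is exactly what is needed. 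It remains to check $H^2(R_3,\Oc(-h+2\zeta))=0$. I would do this via the Koszul sequence $0\arw\Oc(-3h+2\zeta)\arw\Oc(-h+2\zeta)\arw\Oc_{R_3}(-h+2\zeta)\arw0$ on $\PP(\wt\Ec_3^\vee)$, using relative vanishing for $\Oc(-h)$ and relative duality for $\Oc(-3h)$ on the $\PP^2$-bundle. The nowhere-vanishing section then produces a nontrivial locally free extension $\Fc$ of rank two.

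\emph{Step 2: the abstract spinor bundle.} Kuznetsov's definition asks, roughly, for a rank-two bundle on the conic bundle whose restriction to every smooth fibre is $\Oc(-1)^{\oplus 2}$ (for degenerate fibres, the appropriate spinor sheaf), together with the condition $\psi_*\Fc^\vee$ (or $\psi_*\Fc$) locally free of rank four with the right relative cohomology vanishing. First, a general conic fibre $Z$ is disjoint from $C$, so the sequence restricts to $0\arw\Oc_Z(-h+2\zeta)\arw\Fc|_Z\arw\Oc_Z\arw0$. Here $h|_Z$ has degree $2$ and $\zeta|_Z$ is trivial, so the extension class lies in $H^1(\PP^1,\Oc(-2))=\CC$. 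Nontriviality on each fibre then forces $\Fc|_Z\simeq\Oc_{\PP^1}(-1)^{\oplus 2}$, the spinor bundle of a smooth conic (up to twist).

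\emph{Main obstacle.} The hard step is showing that the fibrewise extension class is nonzero on every fibre, including the fibre containing $C$ and the singular fibres over $\Delta$. I would first compute $\psi_*$ of the sequence, obtaining $R^1\psi_*\Oc(-h+2\zeta)\simeq\Oc(2\zeta)\otimes(\text{line bundle})$, and identify the global class with a nowhere-vanishing section of it. If that fails, I would instead verify the spinor conditions through Kuznetsov's equivalent criterion in terms of $R\psi_*\Fc=0$ together with $\Fc$ being exceptional relative to $\Oc$.
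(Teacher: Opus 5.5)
Your Step 1 is sound and matches the paper in substance. The paper computes $\Ext^\bullet(\eta_*\Oc_C,\Oc(-h+2\zeta))$ by Serre duality on $C$, which is the same numerology as your $\det\Nc_{C|R_3}\otimes\Lc|_C\simeq\Oc_C$. Your Hartshorne--Serre argument also gives local freeness of $\Fc$, which the paper leaves implicit. Your justification of $\zeta\cdot C=1$ is loose. The clean statement is that $\psi$ maps $C$ isomorphically onto a line $L\subset\PP^2$, and you need exactly this below.

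The gap is Step 2. In the sense used here, an abstract spinor bundle is a rank-two bundle with $R\psi_*\Fc=0$ and $\det\Fc\simeq\Oc(-h)\otimes\psi^*(\text{line bundle})$. The determinant condition is immediate from your sequence, since $\det\Ic_{C|R_3}\simeq\Oc$. The condition you recalled, ``$\psi_*\Fc^\vee$ locally free of rank four'', is not the definition.

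For $R\psi_*\Fc=0$ you only treat smooth fibres disjoint from $C$. Even there you assume that the restricted extension class is nonzero, which is the entire content, and you defer it as the ``main obstacle''. Moreover, $C$ lies in no fibre. Every fibre $Z$ over $L$ meets $C$ in a point, so $\Ic_{C|R_3}|_Z\not\simeq\Oc_Z$ and your restricted sequence does not hold there.

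The missing idea is to push the defining sequence forward, as the paper does, rather than work fibre by fibre. First, $R\psi_*\Ic_{C|R_3}\simeq\Oc(-\zeta)$, the kernel of $\Oc\to\bar\eta_*\Oc_L$. Second, $R\psi_*\Oc(-h+2\zeta)\simeq\Oc(-\zeta)[-1]$. Your own Koszul computation on $\PP(\wt\Ec_3^\vee)$ gives this, as does Grothendieck duality, since $\Oc(-h+3\zeta)$ is the relative dualizing sheaf. Hence $R\psi_*\Fc$ is the shifted cone of a connecting map $\delta\colon\Oc(-\zeta)\to\Oc(-\zeta)$, which is a scalar, and $R\psi_*\Fc=0$ iff $\delta\neq0$. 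By flat base change this gives $H^\bullet(Z,\Fc|_Z)=0$ on every fibre, smooth or singular, at once, so your fibre-by-fibre obstacle disappears.

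It remains to see that $\delta\neq0$, a point the paper also passes over quickly. Let $\Sigma=\psi^{-1}(L)$. By adjunction, $\delta$ corresponds to the composite $\Oc(-\zeta)\simeq\Ic_\Sigma\hookrightarrow\Ic_{C|R_3}\xrightarrow{e}\Lc[1]$, where $e$ is the extension class. So it suffices that $\Ext^1(\Ic_{C|R_3},\Lc)\to\Ext^1(\Oc(-\zeta),\Lc)\simeq\CC$ is injective. Its kernel is the image of $\Ext^1(\Ic_{C|\Sigma},\Lc)\simeq H^2(\Sigma,\Ic_{C|\Sigma}(-2\zeta))^\vee$. This group vanishes: since $R\psi_*\Oc_\Sigma\simeq\Oc_L$ and $C\to L$ is an isomorphism, $H^1(\Oc_\Sigma(-2\zeta))\to H^1(\Oc_C(-2\zeta))$ is an isomorphism and $H^2(\Oc_\Sigma(-2\zeta))=0$.
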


\begin{proof}
    Call $\eta$ the embedding of $C$ in $R_3$. One has:
    \begin{equation*}
        \begin{split}
            R^\bullet\Hom(\eta_*\Oc_C, \Oc(-h+2\zeta)) & \simeq R^\bullet\Hom(\Oc(-h+2\zeta), \eta_*\Oc_C[3])\\
            & \simeq R^\bullet\Hom(\Oc_C(2\zeta), \Oc_C[3])\\
            & \simeq H^\bullet(\PP^1, \Oc_{\PP^1}(-2)[3]).
        \end{split}
    \end{equation*}
    where in the second isomorphism we used the fact that $\Oc_C(-h)\simeq\Oc_C$ (recall that $C$ is contracted to a point by $|h|$). This proves the existence of a unique two-terms extension of $\eta_*\Oc_C$ and $\Oc(-h+2\zeta)$. Together with the ideal sheaf sequence of $C$ in $R_3$, this proves the first claim.\\
    \\
    To verify the second statement, we need to prove that:
    \begin{enumerate}
        \item $\psi_*\Fc = 0$
        \item $\det F = \Oc(-h+k\zeta)$ for some integer $k$.
    \end{enumerate}
    $(1)$ Applying the pushforward to the ideal sheaf sequence, we get:
    \begin{equation*}
        \psi_*\Fc = \operatorname{Cone}\left(
            \psi_*\Ic_{C|R_3}[-1]\arw \psi_*\Oc(-h+2\zeta).
        \right)
    \end{equation*} 
    Let us treat the two terms separately. The first one is:
    \begin{equation*}
        f_*\Ic_{C|R_3} \simeq \operatorname{Cone}\left(
            \Oc[-1]\arw \bar\eta_*\Oc_L[-1]
        \right) \simeq \Oc(-\zeta)
    \end{equation*}
    where the term $\bar\eta_*\Oc_L$ is due to the fact that $L:=\psi(C)$ is a line, and the morphisms $\psi$ and $\eta$ give a commutative diagram as follows:
    \begin{equation*}
        \begin{tikzcd}[row sep = huge, column sep = huge]
            C\ar[hook]{r}{\eta}\ar[swap]{d}{\bar\psi} & R_3 \ar{d}{\psi}\\
            L\ar[hook]{r}{\bar\eta} & \PP^2
        \end{tikzcd}
    \end{equation*}
    To address the second term, let us begin by noting that:
    \begin{equation*}
        \Oc(-h+2\zeta) \simeq \Oc(-\zeta)\otimes \psi^!\Oc[-1]\simeq \Oc(-\zeta)\otimes \Hc om (\Oc, \psi^!\Oc[-1]),
    \end{equation*}
    where ``!'' denotes the right adjoint of the derived pushforward. Then, by Grothendieck duality and projection formula, we see that 
    \begin{equation*}
        \begin{split}
            \psi_* \left( \Oc(-\zeta)\otimes \Hc om (\Oc, \psi^!\Oc[-1]) \right) & \simeq \Oc(-\zeta)\otimes \psi_* \Hc om (\Oc, \psi^!\Oc[-1]) \\
            & \simeq \Oc(-\zeta)\otimes \Hc om (\Oc, \Oc[-1]) \\
            & \simeq \Oc(-\zeta)[-1]
        \end{split}
    \end{equation*}
    Putting everything together, one gets $\psi_*\Fc = 0$.\\
    \\
    $(2)$ simply folloows by the short exact sequence:
    \begin{equation*}
        0\arw\Oc(-h+2\xi)\arw \Fc\arw \Ic_{C|R_3}\arw 0.
    \end{equation*}
\end{proof}

\begin{lemma}\label{lem:resolutions}
    One has the following resolutions on $\PP^2$:
    \begin{equation*}
        0\arw T(-\zeta)\oplus\Oc(\zeta) \arw \Ec_3\arw \bar\eta_*\Oc_L\arw 0
    \end{equation*}
    \begin{equation*}
        0\arw\Oc(-\zeta)\arw\Oc^{\oplus 3}\oplus\Oc(\zeta)\arw \Ec_3\arw \bar\eta_*\Oc_L\arw 0
    \end{equation*}
\end{lemma}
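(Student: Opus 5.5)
The plan is to obtain both sequences from the presentation of $\Ec_3$ as the cokernel of the embedding $\iota:\Oc^{\oplus 2}\hookrightarrow \scrP^\vee\otimes T(-\zeta)\oplus\Oc(\zeta)= T(-\zeta)^{\oplus 2}\oplus\Oc(\zeta)$ of Section \ref{sec:duality_nodal_GM}, by splitting off one copy of $T(-\zeta)$ and applying the snake lemma. Write the target as $T_1\oplus(T_2\oplus\Oc(\zeta))$ with $T_1\simeq T_2\simeq T(-\zeta)$. The map in the first sequence will be the composite $T_2\oplus\Oc(\zeta)\hookrightarrow T_1\oplus T_2\oplus\Oc(\zeta)\twoheadrightarrow \Ec_3$. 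A ranks check is consistent with the statement: $\Ec_3$ has rank $3$, as does $T(-\zeta)\oplus\Oc(\zeta)$.

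First I will compare the short exact sequences
\begin{equation*}
0\arw T_2\oplus\Oc(\zeta)\arw T_1\oplus T_2\oplus\Oc(\zeta)\arw T_1\arw 0
\end{equation*}
and $0\arw\Oc^{\oplus 2}\arw T_1\oplus T_2\oplus\Oc(\zeta)\arw\Ec_3\arw 0$. The snake lemma then identifies the kernel and cokernel of $T_2\oplus\Oc(\zeta)\arw\Ec_3$ with the kernel and cokernel of the composite $\tau:=\mathrm{pr}_1\circ\iota:\Oc^{\oplus 2}\arw T_1=T(-\zeta)$. Next I will use $H^0(\PP^2,T(-\zeta))\simeq \CC^3$, the underlying vector space of the $\PP^2$, where a vector $v$ gives the section $v \bmod x$ at a point $x$. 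For general $s_1,s_2$ the two components $\tau(e_1)=v_1$ and $\tau(e_2)=v_2$ are linearly independent; only this genericity is needed. Then $\tau$ drops rank exactly at the points $x\in\langle v_1,v_2\rangle$, which form a line $L\subset\PP^2$. Since $\tau$ is generically injective, it is injective as a map of sheaves, so the kernel vanishes. This proves injectivity of the first map.

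It remains to identify $\Cc:=\operatorname{coker}\tau$ with $\bar\eta_*\Oc_L$. Since $\det\tau$ is a section of $\det T(-\zeta)=\Oc(\zeta)$ vanishing on $L$, the cokernel is a sheaf on the Cartier divisor $L$. Along $L$ the map $\tau$ has rank exactly one, because $v_1\neq v_2$ modulo any $x$, so $\Cc$ is a line bundle on $L$. Its degree follows from Euler characteristics: $\chi(T(-\zeta))-\chi(\Oc^{\oplus 2})=3-2=1=\chi(\Oc_L)$. Hence $\Cc\simeq\bar\eta_*\Oc_L$, which gives the first resolution. For the second, I will splice in the Euler sequence $0\arw\Oc(-\zeta)\arw\Oc^{\oplus 3}\arw T(-\zeta)\arw 0$ on the summand $T_2$. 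This replaces $T(-\zeta)\oplus\Oc(\zeta)$ by the two-term complex $\Oc(-\zeta)\arw\Oc^{\oplus 3}\oplus\Oc(\zeta)$, and exactness at each spot is inherited from the two short exact sequences.

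The main obstacle I expect is the rank-one claim along $L$, that is, that $\Cc$ is honestly $\Oc_L$ rather than a sheaf with embedded or thicker support. I would check it in local coordinates near a point of $L$, where $\tau$ is a $2\times 2$ matrix with one entry a unit and determinant a local equation of $L$. Its cokernel is then $\Oc/(\det)$, which settles the claim.
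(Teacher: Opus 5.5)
Your argument is correct, and at its core it is the paper's argument: split off one copy of $T(-\zeta)$ from $\scrP^\vee\otimes T(-\zeta)\oplus\Oc(\zeta)$ and apply the snake lemma. The two proofs differ in the level at which this is done.

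\begin{itemize}
\item The paper uses the split resolution $0\arw\Oc(-\zeta)^{\oplus 2}\arw\Oc^{\oplus 4}\oplus\Oc(\zeta)\arw\Ec_3\arw 0$ from the Verra section, together with the Euler sequence for $T(-\zeta)$. The quotient row of its $3\times 3$ diagram is then a single linear form $\Oc(-\zeta)\arw\Oc$, whose cokernel is visibly the structure sheaf of a line.
\item You stay with the defining presentation $\Oc^{\oplus 2}\hookrightarrow T(-\zeta)^{\oplus 2}\oplus\Oc(\zeta)$. Your quotient term is therefore $\tau\colon\Oc^{\oplus 2}\arw T(-\zeta)$, and you need the degeneracy-locus, local normal form and Euler characteristic argument to identify its cokernel.
\end{itemize}

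The two routes are linked by the Euler sequence. Since $\tau$ lifts to the constant map onto $\langle v_1,v_2\rangle\otimes\Oc\subset\CC^3\otimes\Oc$, one has $\operatorname{coker}\tau\simeq\operatorname{coker}\bigl(\Oc(-\zeta)\arw(\CC^3/\langle v_1,v_2\rangle)\otimes\Oc\bigr)$. This is exactly the paper's bottom row, and using it would let you skip the local analysis. What your version buys is that it makes explicit a genericity input the paper uses tacitly. The paper's middle column $\Oc^{\oplus 3}\oplus\Oc(\zeta)\hookrightarrow\Oc^{\oplus 4}\oplus\Oc(\zeta)\twoheadrightarrow\Oc$ is injective with cokernel $\Oc$ precisely because $v_1,v_2$ are linearly independent.

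Two small points.

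First, your justification that $\tau$ has rank exactly one along $L$ is misstated. At $x=[v_1-v_2]\in L$ one does have $v_1\equiv v_2$ modulo $x$. The correct reason is that the $2$-dimensional span $\langle v_1,v_2\rangle$ cannot be contained in the $1$-dimensional line $x\subset\CC^3$.

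Second, in context $L$ is the specific line $\psi(C)$, whereas your $L=\PP\langle v_1,v_2\rangle$ depends on the chosen splitting of $\scrP^\vee\otimes T(-\zeta)$. To get the right line, take $T_1$ to be the projection given by evaluation at $u\in\scrP$, where $\rho(C)=[u\otimes w]\in\Gamma$.
\begin{itemize}
\item Then $s_i(u\otimes w)=0$ forces $\langle v_1,v_2\rangle$ to be the annihilator of $w$ under $\scrW^\vee\simeq\wedge^2\scrW$, so $L=\PP(w\wedge\scrW)=\psi(C)$.
\item With this choice, the kernel of your surjection $\Ec_3\arw\bar\eta_*\Oc_L$ also agrees with that of the restriction map $\psi_*\Oc(h)\arw\psi_*(\Oc(h)|_C)$, which is the map used later in the spinor computation.
\end{itemize}
The paper's proof is silent on this point as well.
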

\begin{proof}
    The second resolution clearly comes from the first combined with the Euler sequence of $\PP^2$, hence let us focus on the first one. By the definition of $\Ec_3$ and the Euler sequence, one can realize a commutative diagram:
    \begin{equation*}
    \begin{tikzcd}
        \Oc(-\zeta)\ar[hook]{d} \ar[hook]{r} & \Oc^{\oplus 3}\oplus \Oc(\zeta) \ar[hook]{d} \ar[two heads]{r} & T(-\zeta)\oplus\Oc(\zeta) \\
        \Oc^{\oplus 2}(-\zeta) \ar[two heads]{d} \ar[hook]{r} & \Oc^{\oplus 4}\oplus \Oc(\zeta) \ar[two heads]{d} \ar[two heads]{r} & \Ec_3 \\
        \Oc(-\zeta) & \Oc &
    \end{tikzcd}
\end{equation*}
The desired sequence is obtained by snake lemma.
\end{proof}
\begin{remark}
    Note that the second resolution is very similar to the ones obtained by Kuznetsov for other conic bundles over $\PP^2$ (cf. \cite[Section 3.1]{Kuznetsov2025Spinor}).
\end{remark}
\begin{proposition}\label{lem:spinor_bundle_map}
    Let us call $U$ the conic bundle on $\PP^2$ defined by the vanishing of a general section of $\Oc(\theta)$ on $\PP(\Oc(-\zeta)\oplus\Oc(-2\zeta)\oplus\Oc(-3\zeta))$, where the pushforsward of $\Oc(\theta)$ to $\PP^2$ is $\Oc(\zeta)\oplus\Oc(2\zeta)\oplus\Oc(3\zeta)$. Then there is a birational map $R_3\dashrightarrow U$ over $\PP^2$, induced by the spinor modification associated to $\Fc$.
\end{proposition}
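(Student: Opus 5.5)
We will follow Kuznetsov's recipe for spinor modifications \cite{Kuznetsov2025Spinor}. Given a conic bundle $\psi: R_3\arw\PP^2$ and an abstract spinor bundle $\Fc$ (Lemma \ref{lem:abstract_spinor_bundle}), the sheaf $\Ec' := \psi_*(\Fc\otimes\Oc(h))$, or the appropriate twist of it, is a rank three bundle on $\PP^2$. The natural relative evaluation map sends $R_3$, birationally over $\PP^2$, onto a conic bundle in $\PP_{\PP^2}(\Ec'^\vee)$. So the plan is:
\begin{enumerate}
\item compute $\psi_*(\Fc(h))$ explicitly;
\item show that it splits as $\Oc(\zeta)\oplus\Oc(2\zeta)\oplus\Oc(3\zeta)$, up to a global twist by a power of $\Oc(\zeta)$;
\item identify the image of $R_3$ with the zero locus of a section of $\Oc(\theta)$.
\end{enumerate}

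\emph{Step (1): the pushforward.} We twist the defining sequence of $\Fc$ by $\Oc(h)$ and push forward along $\psi$, obtaining a triangle
\begin{equation*}
\psi_*\Oc(2\zeta)\arw \psi_*\Fc(h)\arw \psi_*(\Ic_{C|R_3}(h)).
\end{equation*}
The first term is $\Oc(2\zeta)$. For the third, we use the ideal sequence of $C$ together with $\psi_*\Oc(h)\simeq \wt\Ec_3$ and $\psi_*\Oc_C(h)\simeq\bar\eta_*\Oc_L$; the latter holds because $h$ is trivial on $C$ and $C\arw L$ is an isomorphism. This gives $\psi_*(\Ic_C(h)) = \ker(\wt\Ec_3\arw\bar\eta_*\Oc_L)$, provided the map is surjective, which we check on fibres. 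By Lemma \ref{lem:resolutions} this kernel has the locally free resolution
\begin{equation*}
0\arw\Oc(-\zeta)\arw\Oc^{\oplus3}\oplus\Oc(\zeta)\arw \ker\arw 0,
\end{equation*}
so it equals $T(-\zeta)\oplus\Oc(\zeta)$. Vanishing of $R^1$ follows fibrewise from conic cohomology. Hence $\psi_*\Fc(h)$ is an extension of $T(-\zeta)\oplus\Oc(\zeta)$ by $\Oc(2\zeta)$, of rank $4$.

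This rank is one too many. So in practice we expect to use the relative dual or the twist prescribed in \cite{Kuznetsov2025Spinor}, namely $\psi_*(\Fc^\vee)$ or $\psi_*(\Fc\otimes\omega_\psi^{-1})$, which has rank $\rk\Fc\cdot\chi(\text{fibre}) = 3$ after the correct normalization. We will redo the computation above for that twist. The extension classes then live in groups such as $\Ext^1(T(-\zeta),\Oc(2\zeta))$, $H^1(\Oc(\zeta))$ and $\Ext^1(\Oc(\zeta),\Oc(2\zeta))$. The last two vanish, and the first is controlled by the Euler sequence. We will show that the nontriviality of the extension in Lemma \ref{lem:abstract_spinor_bundle} forces the $T(-\zeta)$ part to be replaced by its Euler-sequence modification. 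Determinant and rank bookkeeping ($c_1 = 6\zeta$, rank three, and $h^0$ of twists) then pins the result down to $\Oc(\zeta)\oplus\Oc(2\zeta)\oplus\Oc(3\zeta)$. Splitting follows from Horrocks' criterion once we have the relevant intermediate cohomology vanishing.

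\emph{Steps (2)--(3): the conic bundle.} By \cite{Kuznetsov2025Spinor}, the spinor modification $U$ is the conic bundle in $\PP(\Ec'^\vee)$ obtained from the quadratic form induced by $\Fc$, using the canonical pairing $\Fc\otimes\Fc\arw\det\Fc$. We check that the relevant line bundle, after twist, is $\Oc(\theta)$, and that the section is general for general $R_3$ by a parameter count. Birationality over $\PP^2$ follows from the general theory, since $\Fc$ is locally free away from $C$, and the two conic bundles share the same discriminant sextic. Proposition \ref{lem:spinor_bundle_map} then follows.

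The main obstacle is Step (2): getting the correct twist, and proving that the extension actually splits into the three stated line bundles rather than some nonsplit bundle with the same Chern classes. This requires a careful cohomology computation on $R_3$, for which we will use Lemma \ref{lem:fisrt_cohomology_quadric fibration} and the resolution of $\wt\Ec_3$.
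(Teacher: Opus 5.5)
Your proposal has a genuine gap: it computes the wrong object, and no choice of twist can repair the step you flag as the main obstacle. In Kuznetsov's construction the spinor modification is read off from the sheaf of algebras $\psi_*(\Fc^\vee\otimes\Fc)$. This has rank four and contains $\Oc$ as the unit. The rank-three complement of $\Oc$ determines, up to twist, the projective bundle in which $U$ lives, and this is what the paper computes.

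No pushforward of a twist of $\Fc$ can play that role. Since $\psi_*\Fc=0$, $\Fc$ restricts to $\Oc_{\PP^1}(-1)^{\oplus 2}$ on a smooth fibre. Hence $\psi_*(\Fc\otimes M)$ always has even rank: it is $2d$ if $M$ has degree $d\geq 0$ on the fibre. In particular $\psi_*(\Fc^\vee)$ and $\psi_*(\Fc\otimes\omega_\psi^{-1})$ have rank four, like $\psi_*\Fc(h)$, and the formula $\rk\Fc\cdot\chi(\text{fibre})=3$ cannot hold.

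Moreover, the extension you find in Step (1) is split, because
\begin{equation*}
\Ext^1(T(-\zeta)\oplus\Oc(\zeta),\Oc(2\zeta))\simeq H^1(\PP^2,\Omega^1(3\zeta))\oplus H^1(\PP^2,\Oc(\zeta))=0.
\end{equation*}
So $\psi_*\Fc(h)\simeq\Oc(2\zeta)\oplus T(-\zeta)\oplus\Oc(\zeta)$. The nontriviality of the extension defining $\Fc$ is already used up in $\psi_*\Fc=0$. Neither it, nor Chern class bookkeeping, nor Horrocks' criterion will turn the non-split summand $T(-\zeta)$ into line bundles.

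The missing ingredient is the computation of $\psi_*(\Fc^\vee\otimes\Fc)$ itself. Since $\det\Fc\simeq\Oc(-h+2\zeta)$, one has $\Fc^\vee\simeq\Fc(h-2\zeta)$. Splicing the defining sequence of $\Fc$ with the ideal sheaf sequence of $C$ and tensoring by $\Fc$ gives
\begin{equation*}
0\arw\Fc\arw\Fc\otimes\Fc^\vee\arw\Fc(h-2\zeta)\arw\Fc\otimes\Oc_C(-2\zeta)\arw 0.
\end{equation*}
The first term has zero pushforward. The third is your Step (1) twisted by $-2\zeta$, namely $\psi_*\Fc(h-2\zeta)\simeq\Oc\oplus T(-3\zeta)\oplus\Oc(-\zeta)$. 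The fourth pushes forward to $\bar\eta_*\Oc_L(-2\zeta)^{\oplus 2}$, where $L=\psi(C)$.

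Hence $\psi_*(\Fc^\vee\otimes\Fc)$ is the kernel of a surjection $\Oc\oplus T(-3\zeta)\oplus\Oc(-\zeta)\arw\bar\eta_*\Oc_L(-2\zeta)^{\oplus 2}$, i.e.\ an elementary modification along $L$. A diagram chase combining the Euler sequence with the ideal sequence of $L$ identifies this kernel with $\Oc\oplus\Oc(-\zeta)\oplus\Oc(-2\zeta)\oplus\Oc(-3\zeta)$.

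This elementary modification, coming from the restriction of $\Fc$ to the curve $C$, is what replaces $T(-3\zeta)$ by $\Oc(-2\zeta)\oplus\Oc(-3\zeta)$. Your plan never brings the term $\Fc|_C$ into play, so it has no mechanism to produce the split bundle. Once the rank-three part $\Oc(-\zeta)\oplus\Oc(-2\zeta)\oplus\Oc(-3\zeta)$ is known, the ambient $\PP^2$-bundle of the spinor modification is identified. Birationality over $\PP^2$ is then supplied by Kuznetsov's general theory.
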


\begin{proof}
    We just need to show that $\psi_*(\Fc^\vee\otimes\Fc) \simeq \Oc\oplus\Oc(\zeta)\oplus\Oc(2\zeta)\oplus\Oc(3\zeta)$. By the computation of $\det\Fc$ in the proof of Lemma \ref{lem:abstract_spinor_bundle}, we have $\Fc^\vee\otimes\Fc \simeq \Fc\otimes\Fc(h-2\zeta)$. Hence, by the long exact sequence obtained by composing the sequence defininfg $\Fc$ with the ideal sheaf sequence of $C\subset R_3$, we find:
    \begin{equation}\label{eq:big_sequence_spinor_modificatioon_product}
        0\arw\Fc\arw \Fc\otimes\Fc^\vee \arw \Fc(h-2\zeta)\arw \Fc\otimes\Oc_C(-2\zeta)\arw 0.
    \end{equation}
    We will compute the pushforwards of all terms one by one.\\
    \\
    \textbf{First term.} The pushforward of the first term is identically zero, since $\Fc$ is an abstract spinor bundle.\\
    \\
    \textbf{Third term.} With the same argument we used to obtain Equation \ref{eq:big_sequence_spinor_modificatioon_product}, we can resolve $\Fc(h-2\zeta)$ as:
    \begin{equation}\label{eq:second_term_equation}
        0\arw\Oc\arw\Fc(h-2\zeta) \arw \Oc(h-2\zeta)\arw \eta_*\Oc_C(-2\zeta)\arw 0.
    \end{equation}
    Here, since $\psi_*\Oc(h-2\zeta)\simeq\Ec_3(-2\zeta)$, the only nontrivial term is the last one. However, in light of Lemma 
    \ref{lem:resolutions}, we can resolve it as:
    \begin{equation*}
        0 \arw \Oc(-h)\arw \Fc(-2\zeta)\arw \Oc(-2\zeta)\arw \eta_*\Oc_C(-2\zeta) \arw 0.
    \end{equation*}
    We immediately note that, by projection formula, $\psi_*\Oc(-2\zeta) \simeq \Oc(-2\zeta)$ and $\psi_*\Fc(-2\zeta) = 0$. Then, to deal with $\psi_*\Oc(-h)$, we use again the Grothendieck duality once we note that $\Oc(-h)\simeq \psi^!\Oc(-3\zeta)[-1]$, and therefore $\psi_*\Oc(-h)\simeq \Oc(-3\zeta)[-1]$. Hence:
    \begin{equation*}
        \Oc(-3\zeta)[1] = \operatorname{Cone}\left(\Oc(-2\zeta)\arw \psi_*\eta_*\Oc_C(-2\zeta)\right)
    \end{equation*}
    and therefore $\psi_*\eta_*\Oc_C(-2\zeta) \simeq \bar\eta_*\Oc_L(-2\zeta)$.
    By splitting the long exact sequence \ref{eq:second_term_equation} into short exact sequences and applying $\psi_*$ we get:
    \begin{equation*}
        \operatorname{Cone}\left(
            \Oc\arw\psi_*\Fc(h-2\zeta)
        \right) \simeq \operatorname{Cone} \left(
            \Ec_3(-2\zeta)\arw\bar\eta_*\Oc_L(-2\zeta)
        \right)[-1].
    \end{equation*}
    The right hand side of the last isomorphism, by the first resolution of lemma \ref{lem:resolutions}, can be rewritten as $T(-23\zeta)\oplus\Oc(-\zeta)[-1]$. By a simple application of the Borel--Weil--Bott theorem on $\PP^2$, we see that the latter has no higher extensions with $\Oc$, which implies that $\psi_*\Fc(h-2\zeta)$ is a trivial extension,i.e.:
    \begin{equation*}
        \psi_*\Fc(h-2\zeta) \simeq \Oc\oplus T(-3\zeta)\oplus\Oc(-\zeta).
    \end{equation*}
    \textbf{Fourth term.} Since $|h|$ contracts $C$ to a point, the bundle $\Fc(h)$ restricts trivially to $C$. Hence the fourth term becomes:
    \begin{equation*}
        \psi_*\Fc\otimes\eta_*\Oc_C(-2\zeta) \simeq \bar\eta_*\Oc_L^{\oplus 2}(-2\zeta).
    \end{equation*}
    \textbf{Summing all up.} Putting everything together, we obtain:
    \begin{equation}\label{eq:spinor_bundle_proof_better_expression}
        f_*\Fc\otimes\Fc^\vee \simeq \operatorname{Cone}\left(
            \Oc(-\zeta)\oplus T(-3\zeta)\oplus\Oc \arw \bar\eta_*\Oc_L^{\oplus 2}(-2\zeta)
        \right)[-1].
    \end{equation}
    Now, let us consider the following diagram:
    \begin{equation*}
        \begin{tikzcd}
            \Oc(-3\zeta)\ar[equals]{d}\ar[hook]{r} & \Oc^{\oplus 2}(-3\zeta)\oplus\Oc(-2\zeta)\oplus\Oc(-\zeta) \ar[hook]{d} & \\
            \Oc(-3\zeta) \ar[hook]{r} & \Oc^{\oplus 3}(-2\zeta)\oplus\Oc(-\zeta) \ar[two heads]{d} \ar[two heads]{r} & T(-3\zeta)\oplus \Oc(-\zeta) \ar[two heads]{d} \\
            & \bar\eta_*\Oc_L^{\oplus 2}(-2\zeta) \ar[equals]{r} & \bar\eta_*\Oc_L^{\oplus 2}(-2\zeta)
        \end{tikzcd}
    \end{equation*}
    where the central row is a direct sum of the Euler sequence with $\Oc(-\zeta)\xrightarrow{\,\,\,I\,\,\,}\Oc(-\zeta)$, and the central column is a direct sum of two copies of the ideal sheaf sequence of $L\subset\PP^2$ with $\Oc(-2\zeta)\xrightarrow{\,\,\,I\,\,\,}\Oc(-2\zeta)$. Then, by the snake lemma, the top right corner can be proven to be $\Oc(-3\zeta)\oplus\Oc(-2\zeta)\oplus\Oc(-\zeta)$. Adding $\Oc\xrightarrow{\,\,\,I\,\,\,}\Oc$ to the resulting short exact sequence (right column) we obtain:
    \begin{equation*}
        0\arw \Oc(-3\zeta)\oplus\Oc(-2\zeta)\oplus\Oc(-\zeta)\oplus\Oc \arw T(-3\zeta)\oplus \Oc(-\zeta) \oplus\Oc \arw \bar\eta_*\Oc_L^{\oplus 2}(-2\zeta) \arw 0,
    \end{equation*}
    and together with Equation \ref{eq:spinor_bundle_proof_better_expression}, this concludes the proof.
\end{proof}

\begin{corollary}
    Let us call $\Bc(X)$ the Clifford part of the derived category of a conic bundle $X$. Then, there is a derived equivalence $\Bc(R_3)\simeq \Bc(U)$.
\end{corollary}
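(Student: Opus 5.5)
The corollary should follow directly from Kuznetsov's theory of spinor modifications \cite{Kuznetsov2025Spinor}, applied to the abstract spinor bundle $\Fc$ of Lemma \ref{lem:abstract_spinor_bundle}. In that framework, an abstract spinor bundle $\Fc$ on a conic bundle $\psi: R_3\arw\PP^2$ produces a rank four vector bundle $\psi_*(\Fc^\vee\otimes\Fc)$. This bundle splits as $\Oc\oplus\Ec'$, where $\Ec'$ is the rank three bundle of the modified conic bundle. Moreover, the even Clifford algebra $\Cc'_0$ of the spinor modification is identified with $\psi_*\Sc\!\operatorname{\mathcal{E}nd}(\Fc)$.

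The plan has three steps.

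\textbf{Step 1.} By Lemma \ref{lem:abstract_spinor_bundle} and Proposition \ref{lem:spinor_bundle_map}, the spinor modification of $R_3$ along $\Fc$ is the conic bundle $U\subset\PP(\Oc(-\zeta)\oplus\Oc(-2\zeta)\oplus\Oc(-3\zeta))$. Indeed, the computation there gives
\begin{equation*}
\psi_*(\Fc^\vee\otimes\Fc)\simeq\Oc\oplus\Oc(\zeta)\oplus\Oc(2\zeta)\oplus\Oc(3\zeta).
\end{equation*}

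\textbf{Step 2.} Invoke the main result of \cite{Kuznetsov2025Spinor}: the even Clifford algebras of $R_3$ and of its spinor modification are Morita equivalent. The Morita bimodule is essentially $\psi_*\Fc\otimes(\text{spinor})$, or equivalently the pushforward of $\Fc$ twisted so that it becomes a $(\Cc_0(R_3),\Cc_0(U))$-bimodule. It is locally projective of rank two over each algebra. The two key inputs are the defining conditions of an abstract spinor bundle, $\psi_*\Fc=0$ and $\det\Fc=\Oc(-h+k\zeta)$, both checked in Lemma \ref{lem:abstract_spinor_bundle}. These guarantee that $\Fc$ restricts on each smooth conic fibre to $\Oc_{\PP^1}(-1)$, and to the appropriate reflexive rank one module on singular fibres.

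\textbf{Step 3.} A Morita equivalence of sheaves of algebras on $\PP^2$ induces an equivalence of abelian categories of coherent modules. Passing to bounded derived categories gives $\dbcoh(\PP^2,\Cc_0(R_3))\simeq\dbcoh(\PP^2,\Cc_0(U))$. By the decomposition \eqref{eq:sod_quadroc_fibration_in_general}, these are exactly the Clifford components $\Bc(R_3)$ and $\Bc(U)$, up to the harmless twist by $\Oc(-h)$ in the definition of $\Bc_n$. This proves $\Bc(R_3)\simeq\Bc(U)$.

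The main obstacle is making sure the hypotheses of Kuznetsov's Morita theorem are actually met. The conditions on $\Fc$ are checked, but the theorem also requires that $U$ be the conic bundle associated to $\Fc$, with its identification of $\psi_*\Sc\!\operatorname{\mathcal{E}nd}(\Fc)$ with $\Cc_0(U)$. Since $\Fc$ has a degenerate locus along the contracted curve $C$, I would first check flatness of $\Fc$ over $\PP^2$ and local freeness of $\psi_*(\Fc^\vee\otimes\Fc)$. The latter holds by Proposition \ref{lem:spinor_bundle_map}, since the result is split. This allows the general theorem to apply verbatim.
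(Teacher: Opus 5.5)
Your proposal follows the same route as the paper, which gives no separate proof of this corollary and treats it as immediate from Proposition~\ref{lem:spinor_bundle_map} together with Kuznetsov's theorem that conic bundles related by a spinor modification have Morita-equivalent even Clifford algebras; Proposition~\ref{lem:spinor_bundle_map} identifies $U$ as the spinor modification of $R_3$ along the abstract spinor bundle $\Fc$ of Lemma~\ref{lem:abstract_spinor_bundle}. A few minor slips do not affect the argument: $\Fc$ restricts to $\Oc_{\PP^1}(-1)^{\oplus 2}$ (not $\Oc_{\PP^1}(-1)$) on smooth fibres; the Morita bimodule cannot literally be a pushforward of $\Fc$, since $\psi_*\Fc=0$; and $\Fc$ is locally free by the Serre correspondence, because $\det\Nc_{C|R_3}\otimes\Oc_C(-h+2\zeta)\simeq\Oc_C$, so it has no degenerate locus along $C$ and is automatically flat over $\PP^2$.
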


\subsection{Spinor modification of Verra threefolds}
Let $V$ be a Verra threefold as in Section~\ref{section_relation_Verra}. Its anticanonical divisor $-K_V=-(2H_1+2H_2-3H_1-3H_2)=H_1+H_2$, where $H_1,H_2$ are the pullbacks of the ample generators of the bases of the two (trivial) $\PP^2$-bundle structures. For $i = 1,2$, let $f_1, f_2$ be the restrictions of such projections to the Verra threefold. First, we conclude a new proof of Theorem~\ref{thm:Torelli_nodal_GM}, which removes the genericity assumption for 1-nodal factorial Gushel-Mukai threefold $X_3$. 

\begin{proof}[New proof of Theorem~\ref{thm:Torelli_nodal_GM}]
By Proposition~\ref{prop:verra_threefolds_and_gm_threefolds} above, there is an equivalence $\wt{\Kc u}(X_3)\simeq\Kc u(V_3)$, Then the equivalence becomes the one $\Psi:\Kc u(V_3)\simeq\Kc u(V_3')$. Then by \cite[Theorem 1.2]{Kuznetsov2025Spinor}, the conic bundle $V_3'$ is the spinor modification of $V_3$. On the other hand, the discriminant locus of $V_3$ is a sextic curve on $\mathbb{P}^2$, so the generic fiber is smooth. Then by \cite[Lemma 2.21]{Kuznetsov2025Spinor}, $V_3$ is birationally equivalent to $V_3'$. By the birational equivalences $V_3\sim\wt{X_3}$ and $V_3'\sim\wt{X_3'}$(See \cite{debarre_iliev_manivel_GM_Verra}), we get $X_3\sim X_3'$.  

%where the right-hand side is a Clifford component of a conic bundle $R_3$ over $\mathbb{P}^2$. Then the equivalence becomes the equivalence $\Psi:D^b(\mathbb{P}^2,\mathcal{C}_3)\simeq D^b(\mathbb{P}^2,\mathcal{C}'_3)$ of the Clifford component of the conic bundle $R_3$ and $R'_3$. 
\end{proof}

Then we apply the same line of thought of Proposition \ref{lem:spinor_bundle_map}: we define a curve $C\subset V$, and an abstract spinor bundle on $V$, which is an extension of the ideal sheaf of $C\subset V$ with a line bundle. However, as we show in Lemma \ref{lem:canonical_spinor_bundle_Verra}, the associated spinor modification turns out to be trivial.

\begin{lemma}\label{lem:canonical_spinor_bundle_Verra}
    Let $C$ be the intersection of $V$ with the diagonal $\Delta\subset\PP^2\times\PP^2$. Then one has a short exact sequence:
    \begin{equation*}
        0\arw \Oc(-2H_1 - H_2)\arw \Fc \arw \Ic_{C|V}\arw 0.
    \end{equation*}
    where $\Fc$ is a canonical abstract spinor bundle, in the sense of \cite{sasha_spinor_modifications}.
\end{lemma}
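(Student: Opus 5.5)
The plan follows the template of Lemma \ref{lem:abstract_spinor_bundle}: construct $\Fc$ as the unique nontrivial extension, check the two axioms of an abstract spinor bundle (vanishing pushforward, determinant of the form $-h+k\zeta$), and then check canonicity in the sense of \cite[Lemma 2.16]{sasha_spinor_modifications}. Here the conic bundle structure is $f_1\colon V\arw\PP^2$, with relative hyperplane class $H_2$ and base class $H_1$.

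\textbf{Step 1: the curve and the extension.} First I will identify $C$. The diagonal $\Delta\simeq\PP^2$ meets $V$ in a plane conic or quartic, depending on how the $(2,2)$ class restricts; via $f_1$ it maps isomorphically onto its image curve $D\subset\PP^2$. By Serre duality on $V$, with $\omega_V=\Oc(-H_1-H_2)$,
\begin{equation*}
\Ext^\bullet(\Oc_C,\Oc(-2H_1-H_2))\simeq H^{3-\bullet}(C,\Oc_C(H_1))^\vee,
\end{equation*}
using $\Oc_C(H_1)\simeq\Oc_C(H_2)$ on the diagonal. The ideal sheaf sequence then gives
\begin{equation*}
\Ext^1(\Ic_{C|V},\Oc(-2H_1-H_2))\simeq\Ext^2(\Oc_C,\Oc(-2H_1-H_2))\simeq H^1(C,\Oc_C(H_1))^\vee.
\end{equation*}
This space is nonzero only if $C$ has positive genus and $\Oc_C(H_1)$ is special. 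So I expect to need the conormal, or Koszul, description of $C$ as a local complete intersection of codimension two, with $\det N_{C|V}=\Oc_C(2H_1+H_2)\otimes\omega_V^\vee$. With that description, Serre's construction (Hartshorne--Serre correspondence) produces a rank-two $\Fc$ directly: the condition $\omega_C\simeq(\omega_V\otimes\det)|_C$ reduces to $\omega_C\simeq\Oc_C(-H_1-H_2+2H_1+H_2)=\Oc_C(H_1)$. That is adjunction for a plane quartic, which is consistent. Checking this degree is where I would start.

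\textbf{Step 2: the spinor axioms.} The determinant $\det\Fc=\Oc(-2H_1-H_2)$ is immediate from the sequence. For $f_{1*}\Fc=0$, I will push forward both ends, as in Lemma \ref{lem:abstract_spinor_bundle}.
\begin{enumerate}
\item The first term: since $\Oc(-H_2)$ restricts to $\Oc(-1)$ on the conic fibres, $f_{1*}\Oc(-H_2)=0$, so $f_{1*}\Oc(-2H_1-H_2)=0$.
\item The second term: $f_{1*}\Ic_{C|V}=\Cone(\Oc\arw\bar\eta_*\Oc_D)[-1]=\Oc(-D)$.
\end{enumerate}
This has the wrong shape, so I will twist conventions. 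Either I use $\Oc(-H_2+kH_1)$ as the sub-line-bundle, or I apply relative Grothendieck duality $\Oc(-2H_2)\simeq f_1^!\Oc(-kH_1)[-1]$ to produce a shifted $\Oc(-D)$ that cancels through the nontrivial connecting morphism. The correct bookkeeping is to show that the connecting map $f_{1*}\Ic_{C|V}\arw f_{1*}\Oc(\cdot)[1]$ is an isomorphism, which holds exactly because the extension class is nonzero.

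\textbf{Step 3: canonicity.} \cite[Lemma 2.16]{sasha_spinor_modifications} characterises canonical spinor bundles as those of the form $\Fc\simeq\Oc(-H_2)\otimes f_1^*\Gc$-type, or equivalently as the kernel of the evaluation $f_1^*f_{1*}\Oc(H_2)\twoheadrightarrow\Oc(H_2)$ up to twist, which here is $f_1^*\Oc(-H_1)\otimes\Omega$-like. So I will compare $\Fc$ with the pullback along the second projection $f_2$ of $\Omega_{\PP^2}(H_2)$, suitably twisted. Concretely, the Euler sequence on the second factor restricted to $V$ gives an evaluation map whose degeneracy locus is the set of points $(x,y)$ with $y=x$, namely $C=V\cap\Delta$. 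This yields the sequence in the statement with $\Fc\simeq\Omega_{\PP^2}(1)$-type twisted by $-H_1$, pulled back via $(f_1,f_2)$ and composed with the map induced by $x$.

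Identifying $\Fc$ with this tautological bundle is therefore the key step. It proves both existence of the extension, bypassing Step 1, and canonicity. I expect the main obstacle to be matching twists precisely, so that the degeneracy locus is exactly $C$ with its reduced structure and the line subbundle is $\Oc(-2H_1-H_2)$. After that, $\psi_*\Fc=0$ follows by the Borel--Weil--Bott vanishing of $\Omega_{\PP^2}$-twists along the fibres.
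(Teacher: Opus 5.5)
Your Step 3 is essentially the paper's proof. There, $\Delta$ is the zero locus of the section $x\mapsto x \bmod y$ of $\Oc(H_1)\otimes\pi_2^*T_{\PP^2}(-H_2)$, and its dual Koszul complex restricted to $V$ gives the stated sequence with $\Fc=\Oc(-H_1)\otimes\pi_2^*\Omega^1_{\PP^2}(H_2)|_V$. This is the canonical spinor bundle of $f_1$ from \cite[Lemma 2.16]{sasha_spinor_modifications} up to a twist pulled back from the base, and $Rf_{1*}\Fc=0$ follows from the acyclicity of $\Omega^1_{\PP^2}(\pm 1)$.

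Steps 1--2 are then superfluous, which is fortunate because Step 2 is wrong as written. The line bundle $\Oc(H_2)$ has degree $2$ on the conic fibres, so $\Oc(-2H_1-H_2)\simeq f_1^!\Oc(-4H_1)[-1]$ and $Rf_{1*}\Oc(-2H_1-H_2)\simeq\Oc(-4H_1)[-1]\neq 0$. This already has exactly the right shape to cancel $Rf_{1*}\Ic_{C|V}\simeq\Oc(-4H_1)$ through the connecting map, so no change of conventions was needed there.
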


\begin{proof}
    Call $\pi_1$ and $\pi_2$ the projections of $\PP^1\times\PP^2$ to the two factors. Denote by $i: \Delta\xhookrightarrow{\,\,\,\,\,}\PP^2\times\PP^2$ the diagonal embedding. Then, the normal bundle of the diagonal is:
    \begin{equation*}
        \Nc_{\Delta|\PP^2\times\PP^2}\simeq i^*\left( \pi_1^*\Oc(H_1)\otimes\pi_2^*T_{\PP^2}(-H_2)\right).
    \end{equation*}
    Hence, by the short exact sequence of normal bundles associated to the triple $C\subset V \subset \PP^2\times\PP^2$, we conclude that $\Nc_{C|V}\simeq j^*(\left( \pi_1^*\Oc(H_1)\otimes\pi_2^*T_{\PP^2}(-H_2)\right))$. This allows us to write the Koszul resolution:
    \begin{equation*}
        0\arw \Oc(-2H_1 - H_2)\arw \Omega^1_{\PP^2}(-H_1+H_2)|_V \arw \Ic_{C|V}\arw 0.
    \end{equation*}
    The proof is concluded once we observe that $\Omega^1_{\PP^2}(-H_1+H_2)|_V$ is the relative cotangent bundle of $\pi_2$, and hence, up to a twist by a pullback of a line bundle with respect to $\pi_1^*$, it is isomorphic to the $\Fc^{-1}$ described in \cite[Lemma 2.16]{sasha_spinor_modifications}.
\end{proof}

Thus, it is tempting to conjecture that given any abstract spnor bundle $\Fc$ on $V$, if the associoated spinor modification is still a Verra threefold, then this bundle must be isomorphic to a canonical one up to shift. Assuming the conjecture, consider Verra threefolds $V,V'$ such that ${\Kc u}(V)\simeq{\Kc u}(V')$, then by \cite[Theorem 1.2]{Kuznetsov2025Spinor}, $V'$ is the spinor modification of $V$, then $V\cong V'$. This would produce an alternative proof of the \emph{Categorical Torelli theorem} for Verra threefolds.

\bibliographystyle{alpha}
\bibliography{biblio}

@article{ourpaper_cy3s,
    author = "Kapustka, Micha\l{} and Rampazzo, Marco",
    title = "{Torelli problem for Calabi\textendash{}Yau threefolds with GLSM description}",
    eprint = "1711.10231",
    archivePrefix = "arXiv",
    primaryClass = "math.AG",
    doi = "10.4310/CNTP.2019.v13.n4.a2",
    journal = "Commun. Num. Theor. Phys.",
    volume = "13",
    number = "4",
    pages = "725--761",
    year = "2019"
}

@unpublished{FLZ2026categorical,
  title={Categorical {T}orelli theorem for non-factorial {F}ano threefolds},
  author={Faenzi, Daniele and Lin, Xun and Zhang, Shizhuo},
  note={In preparation},
  year={2026}
}

@article {macri:categorical-invarinat-cubic-threefolds,
	AUTHOR = {Bernardara, M. and Macr\`\i , E. and Mehrotra,
	S. and Stellari, P.},
	TITLE = {A categorical invariant for cubic threefolds},
	JOURNAL = {Adv. Math.},
	FJOURNAL = {Advances in Mathematics},
	VOLUME = {229},
	YEAR = {2012},
	NUMBER = {2},
	PAGES = {770--803},
	ISSN = {0001-8708},
	MRCLASS = {14F05 (14C34 14J30)},
	MRNUMBER = {2855078},
	URL = {https://doi.org/10.1016/j.aim.2011.10.007},
}

@article{guo2024conics,
  title={Conics on Gushel-Mukai fourfolds, EPW sextics and Bridgeland moduli spaces},
  author={Guo, Hanfei and Liu, Zhiyu and Zhang, Shizhuo},
  journal={Mathematical Research Letters},
  volume={31},
  number={4},
  pages={1061--1106},
  year={2024},
  publisher={International Press},
  doi={10.4310/MRL.241119003311}
}

@article{iliev1997theta,
  author = {Atanas Iliev},
  title = {The Theta Divisor of the Bidegree (2,2) Threefold in $\mathbb{P}^{2}\times\mathbb{P}^{2}$},
  journal = {Pacific Journal of Mathematics},
  volume = {180},
  number = {1},
  pages = {57--88},
  year = {1997}
}

@book {huyb-book-FM,
    AUTHOR = {Huybrechts, D.},
     TITLE = {Fourier-{M}ukai transforms in algebraic geometry},
    SERIES = {Oxford Mathematical Monographs},
 PUBLISHER = {The Clarendon Press, Oxford University Press, Oxford},
      YEAR = {2006},
     PAGES = {viii+307},
      ISBN = {978-0-19-929686-6; 0-19-929686-3},
   MRCLASS = {14F05 (14-02 18E30)},
  MRNUMBER = {2244106},
MRREVIEWER = {Bal\'{a}zs Szendr\H{o}i},
       DOI = {10.1093/acprof:oso/9780199296866.001.0001},
       URL = {https://doi.org/10.1093/acprof:oso/9780199296866.001.0001},
}

@article{kuznetsov2025derived,
  author = {Kuznetsov, Alexander and Shinder, Evgeny},
  title = {Derived Categories of Fano Threefolds and Degenerations},
  journal = {Inventiones Mathematicae},
  year = {2025},
  volume = {239},
  number = {2},
  pages = {377--430},
  doi = {10.1007/s00222-024-01304-x},
  url = {https://link.springer.com/article/10.1007/s00222-024-01304-x}
}

@article{cheng2025derived,
  author = {Cheng, Raymond and Perry, Alexander and Zhao, Xiaolei},
  title = {Derived Categories of Quartic Double Fivefolds},
  journal = {International Mathematics Research Notices},
  year = {2025},
  volume = {2025},
  number = {10},
  pages = {rnaf119},
  doi = {10.1093/imrn/rnaf119},
  url = {https://academic.oup.com/imrn/article-abstract/2025/10/rnaf119/8133618}
}

@article{liu2026second,
    author = {Liu, Peize},
    title = {Second Year Progress Report},
    year = {2026},
    month = {2},
    institution = {Mathematics Institute, University of Warwick},
    url ={https://warwick.ac.uk/fac/sci/maths/people/staff/peizeliu/categorical_resolutions_cubic_n_folds.pdf}
}

@article{Kuznetsov2025Spinor,
  author  = {Alexander Kuznetsov},
  title   = {Spinor Modifications of Conic Bundles and Derived Categories of 1-Nodal Fano Threefolds},
  journal = {Proceedings of the Steklov Institute of Mathematics},
  volume  = {329},
  pages   = {88--116},
  year    = {2025},
  doi     = {10.1134/S0081543825600759}
}

@inproceedings{verra2004prym,
  author    = {Alessandro Verra},
  title     = {The Prym map has degree two on plane sextics},
  booktitle = {The Fano Conference},
  year      = {2004},
  pages     = {735--759},
  publisher = {Universit{\`a} di Torino},
  address   = {Turin, Italy}
}

@article{li2021refined,
  title={A refined derived {T}orelli theorem for {E}nriques surfaces},
  author={Li, Chunyi and Nuer, Howard and Stellari, Paolo and Zhao, Xiaolei},
  journal={Mathematische Annalen},
  volume={379},
  number={3},
  pages={1475--1505},
  year={2021},
  publisher={Springer}
}

@article{lin2025serre,
  title={Serre algebra, matrix factorization and categorical {T}orelli theorem for hypersurfaces},
  author={Lin, Xun and Zhang, Shizhuo},
  journal={Mathematische Annalen},
  volume={391},
  number={1},
  pages={163--177},
  year={2025},
  publisher={Springer},
  doi={10.1007/s00208-024-02915-8}
}

@article{pirozhkov2024categorical,
  title={A categorical {T}orelli theorem for hypersurfaces},
  author={Pirozhkov, Dmitrii},
  journal={Bulletin of the London Mathematical Society},
  volume={56},
  number={10},
  pages={3075--3089},
  year={2024},
  publisher={Wiley},
  doi={10.1112/blms.13117}
}

@book{huybrechts2023geometry,
  title={The Geometry of Cubic Hypersurfaces},
  author={Huybrechts, Daniel},
  series={Cambridge Studies in Advanced Mathematics},
  volume={206},
  year={2023},
  publisher={Cambridge University Press},
  address={Cambridge},
  doi={10.1017/9781009280020},
  isbn={9781009280020}
}

@article{FLZ2024categorical,
  title={New perspectives on categorical Torelli theorems for del Pezzo threefolds},
  author={Feyzbakhsh, Soheyla and Liu, Zhiyu and Zhang, Shizhuo},
  journal={Journal de Mathématiques Pures et Appliquées},
  volume={187},
  pages={103--627},
  year={2024},
  publisher={Elsevier},
}

@article{peng2025orlov,
  title={Orlov's theorem over a quasiexcellent ring},
  author={Peng, Fei},
  journal={arXiv e-prints},
  pages={arXiv:2503.01083},
  year={2025},
  primaryClass={math.AG}
}

@article{canonaco2007fourier,
  title={Fourier--Mukai functors in the twisted situation},
  author={Canonaco, Alberto and Stellari, Paolo},
  journal={Advances in Mathematics},
  volume={212},
  number={2},
  pages={484--504},
  year={2007},
  publisher={Elsevier},
  doi={10.1016/j.aim.2006.10.008}
}

@article{kuznetsov2008derived,
  title={Derived categories of quadric fibrations and intersections of quadrics},
  author={Kuznetsov, Alexander},
  journal={Advances in Mathematics},
  volume={218},
  number={5},
  pages={1340--1369},
  year={2008},
  publisher={Elsevier},
  doi={10.1016/j.aim.2008.03.007}
}

@article {jacovskis2021categorical,
    AUTHOR = {Jacovskis, Augustinas and Lin, Xun and Liu, Zhiyu and Zhang,
              Shizhuo},
     TITLE = {Categorical {T}orelli theorems for {G}ushel-{M}ukai
              threefolds},
   JOURNAL = {J. Lond. Math. Soc. (2)},
  FJOURNAL = {Journal of the London Mathematical Society. Second Series},
    VOLUME = {109},
      YEAR = {2024},
    NUMBER = {3},
     PAGES = {Paper No. e12878, 52},
      ISSN = {0024-6107,1469-7750},
   MRCLASS = {14F08 (14C34 14D20 14D22 14J45)},
  MRNUMBER = {4709831},
MRREVIEWER = {Joan\ Pons-Llopis},
       DOI = {10.1112/jlms.12878},
       URL = {https://doi.org/10.1112/jlms.12878},
}

@article {perry2020integral,
    AUTHOR = {Perry, Alexander},
     TITLE = {The integral {H}odge conjecture for two-dimensional
              {C}alabi-{Y}au categories},
   JOURNAL = {Compos. Math.},
  FJOURNAL = {Compositio Mathematica},
    VOLUME = {158},
      YEAR = {2022},
    NUMBER = {2},
     PAGES = {287--333},
      ISSN = {0010-437X,1570-5846},
   MRCLASS = {14F08 (14A22 14C30 14J28 14J45)},
  MRNUMBER = {4406785},
MRREVIEWER = {Chunyi\ Li},
       DOI = {10.1112/s0010437x22007266},
       URL = {https://doi.org/10.1112/s0010437x22007266},
}

@article{ottemrennemo,
  title={A counterexample to the birational Torelli problem for Calabi–Yau threefolds},
  author={John Christian Ottem and J{\o}rgen Vold Rennemo},
  journal= "Journal of the London Mathematical Society",
  year="2018",
  volume="97"
}

@article {beilinson,
    AUTHOR = {Be{\u{\i}}linson, A. A.},
     TITLE = {Coherent sheaves on {${\mathbf P}^{n}$} and problems in linear
              algebra},
   JOURNAL = {Funktsional. Anal. i Prilozhen.},
  FJOURNAL = {Akademiya Nauk SSSR. Funktsional\cprime ny\u{\i} Analiz i ego
              Prilozheniya},
    VOLUME = {12},
      YEAR = {1978},
    NUMBER = {3},
     PAGES = {68--69},
      ISSN = {0374-1990},
   MRCLASS = {14F05 (18E30 81C99)},
  MRNUMBER = {509388},
MRREVIEWER = {P. E. Newstead},
}

@misc{leungxie, 
      title={On Derived Categories of Generalized Grassmannian Flips}, 
      author={Naichung Conan Leung and Ying Xie},
      year={2023},
      eprint={2309.11136},
      archivePrefix={arXiv},
      primaryClass={math.AG},
      url={https://arxiv.org/abs/2309.11136}, 
}

@article{mypaper_roofbundles,
  title= "{Calabi-Yau fibrations, simple K-equivalence and mutations}",
  author= "Marco Rampazzo",
  journal={ArXiv preprint},
  year="2020"
  }

@article{ourpaper_generalizedroofs,
  title="{The generalized roof F(1,2,n): Hodge structures and derived categories}",
  author= "Enrico Fatighenti and Michał Kapustka and Giovanni Mongardi and Marco Rampazzo",
  year={2022},
  journal = "Algebras and Representation Theory"
}

@article{bayer2024mukai,
  title={Mukai bundles on Fano threefolds},
  author={Bayer, Arend and Kuznetsov, Alexander and Macr{\`\i}, Emanuele},
  journal={arXiv preprint arXiv:2402.07154},
  year={2024}
}

@article{kanemitsu,
  title={Mukai pairs and simple K-equivalence},
  author="Akihiro Kanemitsu",
  journal="Mathematische Zeitschrift",
  year="2022"
}

@article{kuznetsov_cubic,
  title={Derived categories of cubic fourfolds},
  author="Alexander Kuznetsov",
  journal="Progr. Math.",
  year="2010",
  volume="282",
  pages="219–243"
}

@article{kuznetsovperry,
  title={Derived categories of Gushel-Mukai varieties},
  author="Alexander Kuznetsov and Alexander Perry",
  journal="Compositio Mathematica",
  year="2018",
  volume="154",
  pages="1362 - 1406"
}

@article{cattani_et_al,
  title        = {Kernels of categorical resolutions of nodal singularities},
  author       = {Warren Cattani and Franco Giovenzana and Shengxuan Liu and Pablo Magni and Luigi Martinelli and Laura Pertusi and Jieao Song},
  journal      = {Rendiconti del Circolo Matematico di Palermo Series 2},
  year         = {2023},
  volume       = {72},
  pages        = {3077--3105},
  doi          = {10.1007/s12215-023-00895-3},
}

@article{ourpaper_k3s,
author = "Kapustka, Micha\l\ and Rampazzo, Marco",
title = {Mukai duality via roofs of projective bundles},
journal = "Bulletin of the London Mathematical Society",
volume = "54",
number = "2",
pages = "694-717",
doi = "https://doi.org/10.1112/blms.12597",
url = {https://londmathsoc.onlinelibrary.wiley.com/doi/abs/10.1112/blms.12597},
eprint = {https://londmathsoc.onlinelibrary.wiley.com/doi/pdf/10.1112/blms.12597},
year = {2022}
}

@article{borisovcaldararuperry,
  title={Intersections of two Grassmannians in $\mathbb{P}^9$},
  author={Lev Borisov and Andrei Căldăraru and Alexander Perry},
  journal={Journal f{\"u}r die reine und angewandte Mathematik (Crelles Journal)},
  year={2018},
  volume={2020},
  pages={133 - 162}
}

@ARTICLE{imouG2,
       author = {{Ito}, Atsushi and {Miura}, Makoto and {Okawa}, Shinnosuke and {Ueda}, Kazushi},
        title = "{The class of the affine line is a zero divisor in the Grothendieck ring: via $G_2$-Grassmannians}",
      journal = { J. Algebraic Geom.},
      volume = {28},
      doi = {https://doi.org/10.1090/jag/731},
         year = {2019},
        month = {dec},
          eid = {arXiv:1606.04210},
        pages = {245-250},
archivePrefix = {arXiv},
       eprint = {1606.04210},
 primaryClass = {math.AG},
       adsurl = {https://ui.adsabs.harvard.edu/abs/2016arXiv160604210I}
}

@article{verbitsky_hks,
author = {Misha Verbitsky},
title = {{Mapping class group and a global Torelli theorem for hyperkähler manifolds}},
volume = {162},
journal = {Duke Mathematical Journal},
number = {15},
publisher = {Duke University Press},
pages = {2929 -- 2986},
year = {2013},
doi = {10.1215/00127094-2382680},
URL = {https://doi.org/10.1215/00127094-2382680}
}

@article{orlovblowup,
doi = {10.1070/IM1993v041n01ABEH002182},
url = {https://dx.doi.org/10.1070/IM1993v041n01ABEH002182},
year = {1993},
month = {feb},
publisher = {},
volume = {41},
number = {1},
pages = {133},
author = {Orlov, Dmitri},
title = "{Projective bundles, monoidal transformations, and derived categories of coherent sheaves}",
journal = {Izvestiya: Mathematics}
}

@book{weyman,
place={Cambridge},
series={Cambridge Tracts in Mathematics},
title="{Cohomology of Vector Bundles and Syzygies}",
DOI={10.1017/CBO9780511546556},
publisher={Cambridge University Press},
author={Weyman, Jerzy},
year={2003},
collection={Cambridge Tracts in Mathematics}
}

@article{mypaper_torelli,
    title="{New counterexample for the birational Torelli theorem for Calabi--Yau manifolds}",
    year = 2022,
    author = {Marco Rampazzo},
    journal = {arXiv: Algebraic Geometry},
    url = {https://arxiv.org/abs/2211.03702}
}

@article{bondalorlov,
  title="{Derived Categories of Coherent Sheaves}",
  author={Alexei Bondal and Dmitri O. Orlov},
  journal={arXiv: Algebraic Geometry},
  year={2002}
}

@misc{python_script,
  author = {Marco Rampazzo},
  title = {A python script to compute cohomology of irreducible homogeneous vector bundles on rational homogeneous varieties},
  year = {2022},
  howpublished = {\url{https://github.com/marcorampazzo/bott-theorem}},
  note = {Accessed: 2023-04-09}
}

@article{kuznetsov_quadric_fibrations_intersection_quadrics,
title = {Derived categories of quadric fibrations and intersections of quadrics},
journal = {Advances in Mathematics},
volume = {218},
number = {5},
pages = {1340-1369},
year = {2008},
issn = {0001-8708},
doi = {https://doi.org/10.1016/j.aim.2008.03.007},
url = {https://www.sciencedirect.com/science/article/pii/S0001870808000698},
author = {Alexander Kuznetsov}
}

@article{duoli,
  title={On certain K-equivalent birational maps},
  author={Duo Li},
  journal={Math. Z.},
 
  year={2019},
  volume={291},
  pages={959–969}
}

@article{kuznetsov_resolutions_of_singularities,
  author  = {Kuznetsov, Alexander G.},
  title   = {Hyperplane sections and derived categories},
  journal = {Izvestiya: Mathematics},
  volume  = {70},
  number  = {3},
  year    = {2006},
  pages   = {447--547},
  doi     = {10.1070/IM2006v070n03ABEH002318},
  mrnumber = {2238172},
  zbmath  = {1133.14016},
}

@article{perry_pertusi_zhao,
  author  = {Perry, Alexander and Pertusi, Laura and Zhao, Xiaolei},
  title   = {Stability conditions and moduli spaces for Kuznetsov components of Gushel--Mukai varieties},
  journal = {Geometry \& Topology},
  volume  = {26},
  number  = {7},
  year    = {2022},
  pages   = {3055--3121},
  doi     = {10.2140/gt.2022.26.3055},
}

@InProceedings{kuznetsov_prokhorov,
author="Kuznetsov, Alexander
and Prokhorov, Yuri",
editor="Farkas, Gavril
and van der Geer, Gerard
and Shen, Mingmin
and Taelman, Lenny",
title="Rationality of Mukai Varieties over Non-closed Fields",
booktitle="Rationality of Varieties",
year="2021",
publisher="Springer International Publishing",
address="Cham",
pages="249--290",
isbn="978-3-030-75421-1"
}

@article{prokhorov2015rationality,
  title={The rationality problem for conic bundles},
  author={Prokhorov, Yuri},
  journal={Russian Mathematical Surveys},
  volume={70},
  number={1},
  pages={173--202},
  year={2015},
  publisher={IOP Publishing}
}

@article{ikeda2019global,
  title={Global Prym-Torelli theorem for double coverings of elliptic curves},
  author={Ikeda, Atsushi},
  journal={Kyoto Journal of Mathematics},
  volume={59},
  number={4},
  pages={1061--1079},
  year={2019}
}

@article{auel_bernardara_bolognesi_quadric_fibrations,
	author = {Asher Auel and Marcello Bernardara and Michele Bolognesi},
	doi = {https://doi.org/10.1016/j.matpur.2013.11.009},
	issn = {0021-7824},
	journal = {Journal de Math{\'e}matiques Pures et Appliqu{\'e}es},
	number = {1},
	pages = {249-291},
	title = {Fibrations in complete intersections of quadrics, Clifford algebras, derived categories, and rationality problems},
	url = {https://www.sciencedirect.com/science/article/pii/S0021782413001724},
	volume = {102},
	year = {2014}}

@article{Logachev2012FanoGenus6,
  author  = {Logachev, Dmitry},
  title   = {Fano threefolds of genus 6},
  journal = {Asian Journal of Mathematics},
  volume  = {16},
  number  = {3},
  year    = {2012},
  pages   = {515--559}
}

@article{IlievManivel2011FanoDegreeTen,
  author  = {Iliev, Atanas and Manivel, Laurent},
  title   = {Fano manifolds of degree ten and {EPW} sextics},
  journal = {Annales Scientifiques de l'{\'E}cole Normale Sup{\'e}rieure},
  series  = {4},
  volume  = {44},
  number  = {3},
  year    = {2011},
  pages   = {393--426}
}

@article{Nagel1998GeneralizedHodge,
  author  = {Nagel, J.},
  title   = {The generalized {H}odge conjecture for the quadratic complex of lines in projective four-space},
  journal = {Mathematische Annalen},
  volume  = {312},
  number  = {2},
  year    = {1998},
  pages   = {387--401}
}

@article{DebarreKuznetsov2019GushelMukai,
  author  = {Debarre, Olivier and Kuznetsov, Alexander},
  title   = {Gushel--{M}ukai varieties: linear spaces and periods},
  journal = {Kyoto Journal of Mathematics},
  year    = {2019},
  doi     = {10.1215/21562261-2019-0030},
  note    = {Advance publication; final volume, issue, and page numbers to be assigned}
}

@article{bini_kapustka2,
  author    = {Gilberto Bini and Grzegorz Kapustka and Micha{\l} Kapustka},
  title     = {Symmetric locally free resolutions and rationality problems},
  journal   = {Communications in Contemporary Mathematics},
  volume    = {25},
  number    = {8},
  pages     = {2250033},
  year      = {2023},
  doi       = {10.1142/S021919972250033X},
  url       = {https://www.worldscientific.com/doi/10.1142/S021919972250033X}
}

@book{BarthHulekPetersVandeVen2004,
  title     = {Compact Complex Surfaces},
  author    = {Wolf P. Barth and Klaus Hulek and Chris A. M. Peters and Antonius Van de Ven},
  edition   = {Second Enlarged Edition},
  series    = {Ergebnisse der Mathematik und ihrer Grenzgebiete. 3. Folge / A Series of Modern Surveys in Mathematics},
  volume    = {4},
  publisher = {Springer-Verlag, Berlin},
  year      = {2004},
  isbn      = {978-3-642-57739-0},
  doi       = {10.1007/978-3-642-57739-0},
}

@article{kuznetsov_perry_categorical_cones,
  title        = {Categorical cones and quadratic homological projective duality},
  author       = {Kuznetsov, Alexander and Perry, Alexander},
  journal      = {Annales Scientifiques de l'École Normale Supérieure},
  series       = {4\textsuperscript{e} série},
  volume       = {56},
  number       = {1},
  pages        = {1--57},
  year         = {2023},
  doi          = {10.24033/asens.2527},
}

@article{mukai1989biregular,
  author       = {Mukai, Shigeru},
  title        = {Biregular Classification of Fano 3-Folds and Fano Manifolds of Coindex {3}},
  journal      = {Proceedings of the National Academy of Sciences of the United States of America},
  volume       = {86},
  number       = {8},
  pages        = {3000--3002},
  year         = {1989},
  doi          = {10.1073/pnas.86.8.3000}
}

@incollection{mukai1993curves,
  author       = {Mukai, Shigeru},
  title        = {Curves and Grassmannians},
  booktitle    = {Algebraic Geometry and Related Topics (Inchon, 1992)},
  series       = {Conference Proceedings \& Lecture Notes in Algebraic Geometry},
  volume       = {I},
  pages        = {19--40},
  publisher    = {International Press},
  address      = {Cambridge, MA},
  year         = {1993}
}

@article{mukai1995new,
  author       = {Mukai, Shigeru},
  title        = {New Development of Theory of Fano 3-folds: Vector Bundle Method and Moduli Problem},
  journal      = {Sugaku},
  volume       = {47},
  number       = {2},
  pages        = {125--144},
  year         = {1995},
  note         = {In Japanese; English translation: {\it Sugaku Expositions} {\bf 11} (2002), 125--150}
}

@article{mella1999existence,
  author       = {Mella, Massimiliano},
  title        = {Existence of Good Divisors on Mukai Varieties},
  journal      = {Journal of Algebraic Geometry},
  volume       = {8},
  number       = {2},
  pages        = {197--206},
  year         = {1999},
  doi          = {10.1090/S1056-3911-99-00355-2},
  mrnumber     = {1675146}
}

@article{LahozLehnMacriStellari2018,
  title        = {Generalized twisted cubics on a cubic fourfold as a moduli space of stable objects},
  author       = {Mart\'{\i} Lahoz and Manfred Lehn and Emanuele Macr\`{\i} and Paolo Stellari},
  journal      = {J. Math. Pures Appl.},
  volume       = {114},
  pages        = {85--117},
  year         = {2018},
  doi          = {10.1016/j.matpur.2017.09.004},
  eprint       = {https://arxiv.org/abs/1609.04573},
}

@article{BayerLahozMacriNuerPerryStellari2021,
  title        = {Stability conditions in families},
  author       = {Arend Bayer and Mart\'{\i} Lahoz and Emanuele Macr\`{\i} and Howard Nuer and Alexander Perry and Paolo Stellari},
  journal      = {Publ. Math. Inst. Hautes \'Etudes Sci.},
  volume       = {133},
  pages        = {157--325},
  year         = {2021},
  doi          = {10.1007/s10240-021-00124-6},
  eprint       = {https://arxiv.org/abs/1902.08184},
}

@book{huybrechts_FM_transform,
  title        = {Fourier--Mukai Transforms in Algebraic Geometry},
  author       = {Daniel Huybrechts},
  publisher    = {Oxford University Press},
  year         = {2006},
  isbn         = {9780199296866},
  doi          = {10.1093/acprof:oso/9780199296866.001.0001},
  note         = {Oxford Scholarship Online},
}

@article{xie_quadric_bundles,
url = {https://doi.org/10.1515/crelle-2022-0092},
title = {Residual categories of quadric surface bundles},
title = {},
author = {Fei Xie},
pages = {161--199},
volume = {2023},
number = {796},
journal = {Journal für die reine und angewandte Mathematik (Crelles Journal)},
doi = {doi:10.1515/crelle-2022-0092},
year = {2023},
lastchecked = {2026-02-03}
}

@misc{bondal_orlov_flop,
      title={Semiorthogonal decomposition for algebraic varieties}, 
      author={A. Bondal and D. Orlov},
      year={1995},
      eprint={alg-geom/9506012},
      archivePrefix={arXiv},
      primaryClass={alg-geom},
      url={https://arxiv.org/abs/alg-geom/9506012}, 
}

@article{debarre_iliev_manivel_GM_Verra,
  author    = {Olivier Debarre and Atanas Iliev and Laurent Manivel},
  title     = {On nodal prime Fano threefolds of degree 10},
  journal   = {Science China Mathematics},
  year      = {2011},
  volume    = {54},
  pages     = {1591--1609},
  doi       = {10.1007/s11425-011-4182-0},
  url       = {https://doi.org/10.1007/s11425-011-4182-0}
}

@article{iliev_kapuastka_kapustka_ranestad_KummerHK,
author = {Iliev, Atanas and Kapustka, Grzegorz and Kapustka, Michał and Ranestad, Kristian},
title = {Hyper-Kähler fourfolds and Kummer surfaces},
journal = {Proceedings of the London Mathematical Society},
volume = {115},
number = {6},
pages = {1276-1316},
doi = {https://doi.org/10.1112/plms.12063},
url = {https://londmathsoc.onlinelibrary.wiley.com/doi/abs/10.1112/plms.12063},
eprint = {https://londmathsoc.onlinelibrary.wiley.com/doi/pdf/10.1112/plms.12063},
year = {2017}
}

@article{kuznetsov_prokhorov_one_nodal,
  author    = {Kuznetsov, Alexander G. and Prokhorov, Yuri G.},
  title     = {1-nodal Fano threefolds with Picard number 1},
  journal   = {Izvestiya: Mathematics},
  year      = {2025},
  volume    = {89},
  number    = {3},
  pages     = {495--594},
  doi       = {10.4213/im9585e},
  mrnumber  = {4918492}
}

@incollection{sasha_spinor_modifications,
  author    = {Kuznetsov, Alexander G.},
  title     = {Spinor Modifications of Conic Bundles and Derived Categories of 1-Nodal Fano Threefolds},
  booktitle = {Birational Geometry and Fano Varieties},
  series    = {Proceedings of the Steklov Institute of Mathematics},
  volume    = {329},
  year      = {2025},
  pages     = {88--116},
  doi       = {10.1134/S0081543825600759},
  mrnumber  = {4990444},
  note      = {Collected papers dedicated to Yuri Gennadievich Prokhorov on the occasion of his 60th birthday}
}

@article{ourpaper_D5,
    author = {Rampazzo, Marco and Xie, Ying},
    title = {Derived equivalence for the simple flop of type $D_5$},
    year = {2024},
    eprint = {2410.20446},
    archivePrefix = {arXiv},
    primaryClass = {math.AG},
    doi = {10.48550/arXiv.2410.20446},
    url = {https://doi.org/10.48550/arXiv.2410.20446},
    note = {arXiv preprint}
}

\end{document}